\cleardoublepage\thispagestyle{empty}\null\vfill\begin{center}%
\newcommand{\N}{\mathcal{N}}
\newcommand{\R}{\mathcal{R}}
\renewcommand{\O}{\mathcal{O}}
\newcommand{\D}{\mathcal{D}}
\renewcommand{\S}{\mathcal{S}}
\newcommand{\Q}{\mathcal{Q}}
\newcommand{\Sym}{\textup{Sym}}
\newcommand{\ZZ}{\mathbb{Z}}
\newcommand{\RR}{\mathbb{R}}
\newcommand{\QQ}{\mathbb{Q}}
\newcommand{\CC}{\mathbb{C}}
\newcommand{\FF}{\mathbb{F}}
\newcommand{\PP}{\mathbb{P}}
\newcommand{\GG}{\mathbb{G}}
\newcommand{\DDpt}{\mathbb D_{\textup{PT}}^{X}}
\newcommand{\DD}{\mathbb D}
\renewcommand{\L}{\mathcal L}
\newcommand{\1}{\mathbbm{1}}
\newcommand{\p}{\mathsf{p}}
\newcommand{\ch}{\textup{ch}}
\newcommand{\td}{\textup{td}}
\newcounter{conta}
\newtheorem{lemma}[conta]{Lemma}
\newtheorem{theorem}[conta]{Theorem}
\newtheorem{proposition}[conta]{Proposition}
\newtheorem{corollary}[conta]{Corollary}
\newtheorem{claim}[conta]{Claim}
\newtheorem{conjecture}{Conjecture}
\newtheorem{definition}{Definition}
\theoremstyle{remark}
\newtheorem{remark}{Remark}
\theoremstyle{remark}
\begin{document}
\title[Virasoro constraints for stable pairs]{\normalfont{Virasoro conjecture for the stable pairs descendent theory
of simply connected 3-folds}\vspace{0.1cm}\\ \footnotesize (with applications to the Hilbert
scheme of points of a surface)\\ \vspace{0.5cm}
\small{Miguel Moreira}}
\address {ETH Z\"urich, Department of Mathematics}
\email{miguel.moreira@math.ethz.ch}

\maketitle

\begin{abstract}
This paper concerns the recent Virasoro conjecture for the theory of stable pairs on a 3-fold proposed by Oblomkov, Okounkov, Pandharipande and the author in \cite{virasorotoricPT}. Here we extend the conjecture to 3-folds with non-$(p,p)$-cohomology and we prove it in two specializations.

For the first specialization, we let $S$ be a simply-connected surface and consider the moduli space $P_n(S\times \PP^1, n[\PP^1])$, which happens to be isomorphic to the Hilbert scheme $S^{[n]}$ of $n$ points on $S$. The Virasoro constraints for stable pairs, in this case, can be formulated entirely in terms of descendents in the Hilbert scheme of points. The two main ingredients of the proof are the toric case and the existence of universal formulas for integrals of descendents on $S^{[n]}$. The second specialization consists in taking the 3-fold $X$ to be a cubic and the curve class $\beta$ to be the line class. In this case we compute the full theory of stable pairs using the geometry of the Fano variety of lines. 
\end{abstract}

\section{Introduction}

\subsection{Stable pairs}

\noindent Let $X$ be a smooth projective 3-fold over $\CC$. A stable pair on $X$ is a coherent sheaf $F$ on $X$ together with a section $s: \O_X\to F$ satisfying the following two stability conditions:
\begin{enumerate}
\item $F$ is pure of dimension 1, i.e. every non-trivial coherent sub-sheaf of $F$ has dimension\footnote{Dimension of a coherent sheaf means the dimension of its support.} 1.
\item The cokernel of $s$ has dimension $0$.
\end{enumerate}

We can associate two discrete invariants to a stable pair, namely 
\[n=\chi(X, F)\in \ZZ \textup{ and }\beta=[C]\in H_2(X; \ZZ)\]
where $C$ is the support of $F$. There is a projective fine moduli space $P_n(X, \beta)$ parametrizing stable pairs with fixed discrete invariants $n$ and $\beta$. Moreover this space carries an obstruction theory and a virtual fundamental class\footnote{Unless otherwise specifies, homology and cohomology are understood with rational coefficients, which are enough for our purposes. However, the virtual fundamental class is actually constructed in the integral Chow ring $A_{d_\beta}(P_n(X, \beta); \ZZ)$.}
\[[P_n(X, \beta)]^{\textup{vir}}\in H_{2 d_\beta}(P_n(X, \beta))\]
where 
\[d_\beta=\int_\beta c_1(X)\]
is the (complex) virtual dimension of $P_n(X, \beta)$ -- note that, unlike in Gromov-Witten theory, the virtual dimension doesn't depend on $n$. See \cite{PTfoundations} for the construction of the virtual fundamental class.

Over $X\times P_n(X, \beta)$ we have the universal stable pair $\O_{X\times P_n(X, \beta)}\to \FF$; when restricted to a fiber $X\times (F, s)$, the universal stable pair is canonically isomorphic to $s: \O_X\to F$. We use this universal structure to define tautological descendent classes. Denote by $\pi_X: X\times P_n(X, \beta)\to X$ and $\pi_P: X\times P_n(X, \beta)\to P_n(X, \beta)$ the projections onto the first and second factors, respectively. 

\begin{definition}
Given $\gamma\in H^\ast(X)$ and $k\in \ZZ_{\geq 0}$, we define
\begin{equation}\ch_k(\gamma)=(\pi_P)_\ast\left(\ch_k\left(\FF-\mathcal O_{X\times P_n(X, \beta)}\right)\cdot\pi_X^\ast(\gamma)\right)\in H^{\ast}(P_n(X, \beta)).
\end{equation}
\end{definition}
Because $\FF$ is supported in codimension $2$, the Chern character $\ch_k(\FF)$ vanishes for $k=0, 1$, hence
\begin{equation}
\label{collapsech}\ch_0(\gamma)=-\int_{X}\gamma \in \QQ\cong H^0(P_n(X, \beta)) \textup{ and } \ch_1(\gamma)=0;
\end{equation}
in particular, $\ch_0(\gamma)$ vanishes if $\gamma\in H^{<6}(X)$. 

Note that if $\gamma$ has (real cohomological) degree $d$ then $\ch_k(\gamma)$ has (real cohomological) degree $d+2k-6$; moreover, if $\gamma$ has Hodge degree $(p,q)$ then the descendent $\ch_k(\gamma)$ has Hodge degree $(p+k-3, q+k-3)$. Alternatively, the descendents may be defined by their action on $H_\ast(P_n(X, \beta))$, which by the push-pull formula is
\[(\pi_P)_\ast\left(\ch_k(\FF-\O_{X\times P_n(X, \beta)})\cdot \pi_X^\ast(\gamma)\cap \pi_P^\ast(\,\cdot\,)\right).\]

Given a product of descendent classes $D=\prod_{j=1}^m \ch_{k_j}(\gamma_j)$, we denote integration against the virtual fundamental class by
\begin{equation}
\left\langle D \right \rangle_{n, \beta}^{X, \textup{PT}}=\int_{[P_n(X, \beta)]^{\textup{vir}}}D.
\end{equation}
The generating function of these invariants is called the partition function and denoted by
\begin{equation}
Z_{\textup{PT}}^X(q \mid D)_\beta=\sum_{n\in \ZZ}q^n\left\langle D \right \rangle_{n, \beta}^{X, \textup{PT}}\in \QQ((q));
\end{equation}
the series lies in $\QQ((q))$ because $P_n(X, \beta)$ is empty for very small $n$. We omit $X, \beta$ from the notation if it's clear from the context. 

\begin{conjecture}
\label{conj: rationality}
For any product of descendents $D=\prod_{k=1}^m \ch_{k_j}(\gamma_j)$, the Laurent series $Z_{\textup{PT}}(q\mid D)$ is the Laurent expansion of a rational function in $q$ satisfying the following functional equation:
\[Z_{\textup{PT}}(q^{-1}\mid D)=(-1)^{\sum_{j=1}^m k_j}q^{-d_\beta}Z_{\textup{PT}}(q\mid D).\]
\end{conjecture}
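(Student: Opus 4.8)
The plan is to establish rationality and the functional equation in tandem, by reducing a general 3-fold to relative toric building blocks and then analysing the PT vertex. First I would apply the degeneration formula for stable pairs of Li--Wu: deforming $X$ to a normal-crossing union $X_1\cup_Y X_2$ --- for instance the deformation to the normal cone of a smooth divisor $Y\subset X$ --- expresses $Z_{\textup{PT}}^X(q\mid D)_\beta$ as a finite sum of products of \emph{relative} PT partition functions of the pairs $(X_i,Y)$, glued along relative insertions indexed by cohomology-weighted partitions, with rubber integrals over stable pairs on $Y\times\PP^1$ at the interface; the descendent classes distribute over the components after splitting the cohomology classes on $X$ across the $X_i$ and $Y$. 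Because this is a finite $\QQ(q)$-bilinear operation, rationality of the absolute theory of $X$ follows from rationality of the relative theories of the $X_i$ and of the rubber, and the functional equation propagates as well: each relative and rubber factor is expected to obey a functional equation of the same shape, with the $q$-exponents --- governed locally by degrees of $c_1$ --- summing to $d_\beta$. This reduces matters to the relative descendent theory of surface-bundle pairs and to the absolute theory of toric 3-folds.

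For the toric case I would run $\TT=(\CC^\ast)^3$-localization on $P_n(X,\beta)$: the $\TT$-fixed locus decomposes into pieces indexed by combinatorial data along the toric $1$-skeleton ($2$D partitions along the edges together with finite box configurations at the vertices), and virtual localization expresses $Z_{\textup{PT}}$ as a sum of PT vertex contributions times edge contributions. The essential input is Pandharipande--Pixton's rationality of the capped descendent vertex: the bare descendent vertex, capped off by a relative $\PP^1$-geometry, is a rational function of $q$, and since the edge gluing and the capping are themselves finite $\QQ(q)$-linear operations with the equivariant variables as parameters, rationality of $Z_{\textup{PT}}$ follows; one then checks that the non-equivariant limit survives, which it does because $X$ is projective. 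The matching local functional equation of the capped vertex, together with additivity of $d_\beta=\int_\beta c_1(X)$ over the edges of the $1$-skeleton, assembles into the global functional equation, and the relative surface-bundle pieces are handled the same way after a further degeneration, using the TQFT structure of the relative theory.

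There is also a conceptual argument for the functional equation worth running in parallel, and it at least explains the shape of the answer: the derived dual $I^\bullet=[\O_X\to F]\mapsto R\mathcal{H}om(I^\bullet,\O_X)$ (suitably shifted) sends an object with $\chi=n$ to one with $\chi=d_\beta-n$ --- by Serre duality on $X$, the two-term deformation--obstruction complex is exchanged with its dual twisted by $K_X$ --- so the substitution $q\mapsto q^{-1}$ pairs $q^n$ with $q^{n-d_\beta}$ and produces the overall factor $q^{-d_\beta}$; and since $\ch_k$ of a dual complex acquires the sign $(-1)^k$, each $\ch_{k_j}(\gamma_j)$ contributes $(-1)^{k_j}$, giving the sign $(-1)^{\sum_j k_j}$. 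Alternatively the functional equation can be extracted from the Gromov--Witten/stable pairs descendent correspondence, under which $q\mapsto q^{-1}$ becomes $u\mapsto-u$ (via $-q=e^{iu}$) and the parity of the Gromov--Witten descendent series supplies the rest.

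The step I expect to be the real obstacle is rationality for 3-folds admitting no useful toric degeneration: the degeneration formula by itself does not reduce an arbitrary $X$ to toric pieces, so one is forced to develop the relative descendent theory of \emph{all} local curves and $\PP^1$-bundles over curves, prove rationality there from the capped vertex via the relative TQFT, and then bootstrap along an induction on the geometry --- carrying the rationality and functional-equation hypotheses through the rubber calculus and the relative gluing at every stage. Setting up and closing that induction, rather than any individual computation, is where essentially all of the difficulty lies; on the conceptual side, the analogous hard point is making the derived-dual comparison rigorous at the level of virtual fundamental classes in the presence of descendent insertions.
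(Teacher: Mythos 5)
There is a fundamental mismatch here: the statement you are proving is stated in the paper as a \emph{conjecture}, and the paper does not prove it in general. The only place the paper establishes it is the special case of the cubic 3-fold in the line class, where rationality and the functional equation are read off from the closed-form partition functions of theorem \ref{fullPTcubic}, computed by identifying $P_{n+1}(X,\beta)$ with $\PP_{F(X)}(\Sym^n\S)$ over the Fano variety of lines, working out the obstruction bundle and all descendents explicitly, and summing the resulting series. Your proposal is therefore not being measured against a general proof in the paper, and, more importantly, it is not itself a proof: it is a description of the known program (Li--Wu degeneration, capped descendent vertex, GW/PT correspondence), and you concede in your final paragraph that the decisive step --- closing the induction that would reduce an arbitrary projective 3-fold with arbitrary descendent insertions to cases where rationality and the functional equation are known --- is missing. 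That concession is exactly where the conjecture remains open, so the argument has a genuine gap rather than an alternative route.

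Concretely, the unproved inputs you rely on are not available in the generality needed. The degeneration formula does not by itself reduce an arbitrary (simply-connected) 3-fold to toric or local-curve pieces, and the relative/rubber descendent theories you would need would have to be proved rational \emph{with} a matching functional equation for all insertions, including descendents of odd and non-$(p,p)$ cohomology, which is beyond what the capped-vertex results establish (they concern equivariant/stationary descendents in toric and related geometries). The ``conceptual'' duality argument also does not go through as stated: applying $R\mathcal{H}om(-,\O_X)$ to a stable-pair complex does not return a stable pair, so the alleged bijection between the $\chi=n$ and $\chi=d_\beta-n$ moduli spaces, and the comparison of virtual classes with descendent insertions under it, is precisely the content that would have to be proved --- as you yourself note. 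Likewise the GW/PT descendent correspondence you invoke is conjectural at this level of generality, so using it to deduce the functional equation is circular. If your goal were the statement the paper actually establishes, the workable path is the one the paper takes: an explicit identification of the moduli spaces, virtual classes and descendents in the given geometry, followed by direct summation of the series.
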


We refer to \cite{survey} for a survey of partial results in the direction of these conjectures, as well as a discussion of equivariant and relative versions.

It's widely believed that the theory of stable pairs is equivalent to other curve counting theories on $3$-folds, such as Gromow-Witten and Donaldson-Thomas theories, \cite{mnop1, mnop2}. Precise statements are known for toric varieties by work of Pandharipande and Pixton, \cite{toricGWPT}; simpler formulas for the correspondence were found more recently by Oblomkov, Okounkov and Pandharipande in \cite{toricGWPTvertex, virasorotoricPT}.

\subsection{String, divisor and dilaton equations for stable pairs}

\noindent The string, divisor and dilaton equations of Gromov-Witten theory have parallel incarnations in the stable pairs side. Their stable pairs versions take a much simpler form and can be formulated as expressions for descendents in non-positive degree.

\begin{proposition}
\label{descendentequations}
For any smooth projective 3-fold $X$, $\beta\in H_2(X; \ZZ)$ and $n\in \ZZ$, we have the following identities of descendents:
\begin{enumerate} 
\item[i)] $\ch_2(\gamma)=0$ for any $\gamma\in H^{p,0}(X)$ or $\gamma\in H^{0,q}(X)$
\item[ii)] $\ch_2(\delta)=\int_\beta \delta$ for any $\delta \in H^2(X)$
\item[iii)] $\ch_3(1)=n-\frac{d_\beta}{2}$.
\end{enumerate}
\end{proposition}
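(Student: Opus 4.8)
The plan is to read off all three identities from the K\"unneth components of the Chern character of the universal sheaf $\FF$ on $X\times P$, where I abbreviate $P=P_n(X,\beta)$. Since $\FF$ is supported in codimension $2$ one has $\ch_k(\FF-\O_{X\times P})=\ch_k(\FF)$ for every $k\ge 2$, whence $\ch_k(\gamma)=(\pi_P)_\ast\big(\ch_k(\FF)\cdot\pi_X^\ast\gamma\big)$ for $k\ge 2$. By the K\"unneth and projection formulas $(\pi_P)_\ast$ kills every class lying in $H^i(X)\otimes H^j(P)$ with $i<6$ and sends $a\otimes b\in H^6(X)\otimes H^\ast(P)$ to $\big(\int_X a\big)\,b$. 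Hence, for the two descendents that land in $H^0(P)\cong\QQ$ — namely $\ch_2(\delta)$ and $\ch_3(1)$ — it suffices to know the K\"unneth components of $\ch_2(\FF)$ and $\ch_3(\FF)$ in $H^\ast(X)\otimes H^0(P)$; restricting $\FF$ to a slice $X\times\{(F,s)\}$ identifies these, by the universal property of $\FF$, with $\ch_2(F)$ and $\ch_3(F)$ respectively.

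For part (ii) I would first discard the $(2,0)$- and $(0,2)$-parts of $\delta$: writing $\delta=\delta^{2,0}+\delta^{1,1}+\delta^{0,2}$, the Hodge-degree rule recorded before the statement gives that $\ch_2(\delta^{2,0})$ and $\ch_2(\delta^{0,2})$ have Hodge degrees $(1,-1)$ and $(-1,1)$ and hence vanish, while $\int_\beta\delta=\int_\beta\delta^{1,1}$ because $\beta$ is an algebraic class. Since $F$ is a pure $1$-dimensional sheaf on the smooth $3$-fold $X$ we have $\ch_0(F)=\ch_1(F)=0$ and $\ch_2(F)\in H^4(X)$ is Poincar\'e dual to the support cycle of $F$, which is by definition the curve class $\beta$. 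Feeding the component $\ch_2(F)\otimes 1\in H^4(X)\otimes H^0(P)$ of $\ch_2(\FF)$ into the computation of the previous paragraph yields $\ch_2(\delta)=\int_X\ch_2(F)\cup\delta=\int_\beta\delta$.

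For part (iii) the same mechanism gives $\ch_3(1)=(\pi_P)_\ast\ch_3(\FF)=\int_X\ch_3(F)$, a number independent of the chosen point since $n$ and $\beta$ are fixed. Hirzebruch--Riemann--Roch, together with $\ch_0(F)=\ch_1(F)=0$, then gives
\[
n=\chi(X,F)=\int_X\ch(F)\,\td(X)=\int_X\ch_3(F)+\tfrac12\int_X\ch_2(F)\,c_1(X)=\int_X\ch_3(F)+\tfrac{d_\beta}{2},
\]
the last equality because $\int_X\ch_2(F)\,c_1(X)=\int_\beta c_1(X)=d_\beta$; therefore $\ch_3(1)=n-\tfrac{d_\beta}{2}$.

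Part (i) is then immediate from the Hodge-degree rule: for $\gamma\in H^{p,0}(X)$ the class $\ch_2(\gamma)$ would have Hodge degree $(p-1,-1)$, and the cohomology of $P$ has no classes of negative Hodge degree, so $\ch_2(\gamma)=0$; the case $\gamma\in H^{0,q}(X)$ is symmetric. Thus the only nontrivial ingredient is the Hodge-degree rule itself — the statement that $\ch_k(\FF)$ is of Hodge type $(k,k)$ in $H^{2k}(X\times P)$ compatibly with the Gysin map $(\pi_P)_\ast$ — and I expect that to be the one point requiring care, because $P$ need not be smooth, so one must invoke the algebraicity of Chern characters of coherent sheaves together with the functoriality of (mixed) Hodge structures under proper pushforward. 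Everything else is the elementary K\"unneth and Riemann--Roch bookkeeping above.
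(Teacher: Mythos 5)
Your proof is correct, and it reaches the same three fiberwise facts the paper uses (Hodge-type vanishing for (i), $\ch_2(F)=\beta$ for (ii), Hirzebruch--Riemann--Roch with $\ch_0(F)=\ch_1(F)=0$ for (iii)), but it gets there by a somewhat different mechanism. The paper works universally: it identifies $\ch_2(\FF)=[Z]$, the class of the universal support $Z\subseteq X\times P_n(X,\beta)$, via Grothendieck--Riemann--Roch, and then deduces (i) from the fact that $(\pi_P^Z)_\ast$ lowers the Hodge bigrading by $(1,1)$ and (ii) by integrating over the fibers of $Z\to P_n(X,\beta)$. You instead bypass the universal support entirely: since $\ch_2(\delta)$ and $\ch_3(1)$ live in $H^0(P)$, the K\"unneth/projection-formula argument reduces them to the restriction of $\ch(\FF)$ to a single slice $X\times\{(F,s)\}$ (legitimate because $\FF$ is flat over $P$, so the derived restriction is just $F$), and then everything is a computation on $X$ alone; this buys you a cleaner treatment of (ii)--(iii) that avoids applying GRR to the possibly badly-behaved family $Z$, at the price of deriving (i) from the Hodge-degree rule for descendents on the possibly singular $P$ — which is the same input the paper's ``reduces the Hodge grading by $(1,1)$'' step secretly needs, so you are not assuming more than the paper does, and you rightly flag it as the one point requiring care. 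Two small remarks: the preliminary splitting $\delta=\delta^{2,0}+\delta^{1,1}+\delta^{0,2}$ in (ii) is redundant, since $\int_X\ch_2(F)\,\delta=\int_\beta\delta$ already holds for every $\delta\in H^2(X)$ (the $(2,0)$ and $(0,2)$ parts integrate to zero against the algebraic class $\beta$ on both sides); and your slice argument determines the $H^\ast(X)\otimes H^0(P)$ component on each connected component of $P$ separately, which is fine precisely because the resulting values $\int_\beta\delta$ and $n-\tfrac{d_\beta}{2}$ are independent of the chosen pair $(F,s)$.
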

The second and third equations are understood in $H^0(P_n(X, \beta))\cong \QQ$. Equations $i)$ (in the case $\gamma=1$), $ii)$ and $iii)$ are known as the string equation, divisor equation and dilaton equation, respectively. They are well known (see for instance \cite[Section 3.2]{survey}), but we include a proof for completeness.
\begin{proof}
Let $Z\subseteq X\times P_n(X, \beta)$ be the support of $\FF$. Set theoretically
\[Z=\{(x, (F, s))\in X\times P_n(X, \beta): F_x\neq 0\}.\]
Since $\O_{Z}\to \FF_{|Z}$ has cokernel supported in codimension 1, $\FF_{|Z}$ is a coherent sheaf of rank 1 in $Z$. Thus by Grothendieck-Riemann-Roch we get $\ch_2(\FF)=[Z]\in H^4(X\times P_n(X, \beta))$. Hence 
\[\ch_2(\gamma)=(\pi_P)_\ast([Z]\pi_X^\ast \gamma)=(\pi_P^Z)_\ast(\pi_X^Z)^\ast \gamma\]
where $\pi_P^Z$ and $\pi_X^Z$ are the projections of $Z$ onto $P_n(X, \beta)$ and $X$, respectively. The string equation follows immediately because $(\pi_P^Z)_\ast$ reduces the Hodge grading by $(1,1)$. For the divisor equation, integration along the fibers gives
\[\ch_2(\delta)=\int_C \delta=\int_\beta \delta\]
where $C=(\pi_{P}^Z)^{-1}(F,s)$ is the support of $F$.

Finally, for the dilaton equation we have
\[\ch_3(1)=(\pi_P)_\ast \ch_3(\FF)=\int_X j^\ast \ch_3(\FF)=\int_X \ch_3(F)\]
where $j: X\cong X\times (F, s)\hookrightarrow X\times P_n(X, \beta)$ is the inclusion of some fiber. By Hirzebruch-Riemann-Roch and using the facts that $c_j(F)=0$ for $j=0,1$ and $\ch_2(F)=[C]=\beta$ where $C$ is the support of $F$ we get
\[n=\chi(F)=\int_X \ch_3(F)+\frac{1}{2}\int_\beta c_1(X)\]
and thus
\[\ch_3(F)=n-\frac{d_\beta}{2}.\] 
\end{proof}

We may formulate the equations of proposition \ref{descendentequations} in terms of the partition function:
\begin{enumerate}
\item[\textit{i)}] $Z_{\textup{PT}}^X\left(q\mid \ch_2(1) D\right)_\beta=0,$
\item[\textit{ii)}] $Z_{\textup{PT}}^X\left(q\mid \ch_2(\delta) D\right)_\beta=\left(\int_\beta \delta\right)Z_{\textup{PT}}^X\left(q\mid  D\right)_\beta,$
\item[\textit{iii)}] $Z_{\textup{PT}}^X\left(q\mid \ch_3(1) D\right)_\beta=\left(q\frac{d}{dq}-\frac{d_\beta}{2}\right)Z_{\textup{PT}}^X\left(q\mid D\right)_\beta.$
\end{enumerate}
These hold for any $\delta\in H^2(X)$ and for an arbitrary product of descendents $D$.

\subsection{Virasoro constraints for stable pairs}

\noindent The existence of universal equations satisfied by the theory of stable pairs of any 3-fold $X$, parallel to the Virasoro constraints for the Gromov-Witten theory of $X$, was first conjectured in \cite{survey}. By using explicit calculations in $\PP^3$, Oblomkov, Okounkov and Pandharipande guessed the explicit equations for $\PP^3$. More recently a general conjecture was proposed for 3-folds with only $(p,p)$-cohomology and proven for toric 3-folds in the stationary\footnote{Stationary descendents are descendents $\ch_k(\gamma)$ of classes $\gamma\in H^{\geq 2}(X)$.} case. 

We briefly describe the proposed conjecture here in slightly more generality by allowing 3-folds with non-$(p,p)$-cohomology, and in particular we allow odd cohomology. To state the Virasoro conjecture for stable pairs we introduce the formal supercommutative $\QQ$-algebra $\DDpt$ generated by
\[\{\ch_k(\gamma): k\geq 0, \gamma\in H^\ast(X)\}\] 
with the linearity relations
\[\ch_k(\lambda_1\gamma_1+\lambda_2 \gamma_2)=\lambda_1\ch_k(\gamma_1)+\lambda_2\ch_k(\gamma_2).\]

We will write $\ch_k(\gamma)$ both for the generator in the abstract algebra $\DDpt$ and for its geometric realization in $H^\ast(P_n(X, \beta))$ defined earlier. Note that for instance $\ch_1(\gamma)$ is non zero in $\DDpt$ but its geometric realization is zero. However, we'll frequently simplify expressions by replacing $\ch_0(\gamma), \ch_1(\gamma)$ using equations \eqref{collapsech}; when we do so we say we ``collapsed'' $\ch_0, \ch_1$. 

We have the cohomological grading in $\DDpt$: a generator $\ch_k(\gamma)$ has degree $|\gamma|+2k-6$. For each $n\geq 0$, integration against the virtual fundamental class of the geometric realization of an element of $\DDpt$ gives a linear map
\[\langle\cdot \rangle^{X,\textup{PT}}_{n,\beta}: \DDpt\to \QQ.\]

We will define some operators on the algebra $\DDpt$.

\begin{itemize}
\item For $k\geq -1$, define a derivation $R_k$ on $\DDpt$ by fixing its action on the generators: given $\gamma\in H^{p,q}(X)$, let
\begin{equation}R_k(\ch_i(\gamma))=\left(\prod_{j=0}^k(i+p-3+j)\right)\ch_{i+k}(\gamma).\end{equation}
\item The operator $T_k: \DDpt\to \DDpt$ is multiplication by a fixed element of $\DDpt$:
\begin{align}\nonumber T_k&=-\frac{1}{2}\sum_{a+b=k+2}(-1)^{p^Lp^R}(a+p^L-3)!(b+p^R-3)!\ch_a\ch_b(c_1)\\\
&+\frac{1}{24}\sum_{a+b=k}a!b!\ch_a\ch_b(c_1c_2).\end{align}
We are using the abbreviation 
\[(-1)^{p^Lp^R}(a+p^L-3)!(b+p^R-3)!\ch_a\ch_b(c_1)\]
for
\[\sum_{i}(-1)^{p^L_i p^R_i}(a+p^L_i-3)!(b+p^R_i-3)!\ch_a(\gamma^L_i)\ch_b(\gamma^R_i)\]
where $\sum_{i}\gamma^L_i\otimes \gamma^R_i$ is the Kunneth decomposition of $\Delta_\ast c_1\in H^\ast(X\times X)$ ($\Delta: X\to X\times X$ is the diagonal map) and $\gamma^L_i$ and $\gamma^R_i$ have Hodge type $(p^L_i, q^L_i)$ and $(p^R_i, q^R_i)$, respectively.
\item For $\alpha \in H^\ast(X)$, we define the derivation $R_{-1}[\alpha]: \DDpt\to \DDpt$ by its action on the generators:
\[R_{-1}[\alpha](\ch_i(\gamma))=\ch_{i-1}(\alpha\gamma).\]
In particular, $R_{-1}[1]=R_{-1}$. For $k\geq -1$, we define the operators $S_k: \DDpt\to \DDpt$ by
\[S_k=(k+1)!\sum_{p_i^L=0} R_{-1}[\gamma_i^L]\ch_{k+1}(\gamma^R_i).\]
The sum runs over the terms $\gamma_i^L\otimes \gamma_i^R$ of the Kunneth decomposition of $\Delta\in H^\ast(X\times X)$ such that $p_i^L=0$. The operators being summed up are the composition of two operators: first we multiply by $\ch_{k+1}(\gamma^R_i)$ and then we apply the derivation $R_{-1}[\gamma_i^L]$ to the product.
\end{itemize}

Note that if $h^{0,1}=h^{0,2}=h^{0,3}=0$ then $S_k$ is simply $(k+1)!R_{-1}\ch_{k+1}(\p)$. For $k=-1$, after collapsing $\ch_0, \ch_1$, we have $S_{-1}=\ch_0(\p)R_{-1}=-R_{-1}$. For $k=0$ we have 
\[S_0=\sum_{p^L_i=0}\ch_{0}\left(\gamma_i^L\gamma_i^R\right)=\frac{1}{24}\ch_0(c_1c_2)=-\frac{1}{24}\int_X c_1c_2.\]
The above formula is a variation of lemma \ref{lem: identitieshrr} for 3-folds; note that $\td_3(X)=\frac{1}{24}c_1c_2$.

\begin{definition}
\label{def: virasoroop}Let $X$ be a smooth projective 3-fold. For $k\geq -1$, we define the operator $\L_k: \DDpt\to \DDpt$ by
\begin{equation}\L_k=R_k+T_k+S_k.
\end{equation}
\end{definition}

\begin{conjecture}
\label{conj: virasoro}
Let $X$ be a (simply-connected) projective smooth 3-fold. For all $k\geq -1$, $\beta\in H_2(X; \ZZ)$, $n\in \ZZ$ and $D\in \DDpt$ we have
\[\left\langle \L_k(D)\right\rangle_{n, \beta}^{X, \textup{PT}}=0.\]
\end{conjecture}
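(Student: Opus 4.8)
The plan is to combine an algebraic reduction to a handful of base cases with a universality principle that reduces the constraints on an arbitrary simply-connected 3-fold to their verification on a spanning family of test geometries. The first step is purely algebraic: I would verify that the operators $\L_k$ satisfy the Virasoro bracket relations $[\L_m,\L_k]=(m-k)\L_{m+k}$ as operators on $\DDpt$, at least modulo the subspace annihilated by every integration map $\langle\cdot\rangle_{n,\beta}^{X,\textup{PT}}$. This is a bookkeeping computation with the derivations $R_k$, the multiplication operators $T_k$ and the operators $S_k$, tracking the Koszul signs coming from odd cohomology. Because these relations express every $\L_k$ with $k\ge 1$ as an iterated commutator of $\L_1$ and $\L_2$, and because $\L_{-1}$ and $\L_0$ essentially encode the string, divisor and dilaton equations of Proposition \ref{descendentequations}, this reduces Conjecture \ref{conj: virasoro} to the finitely many constraints coming from $k\in\{1,2\}$.

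The second step is a universality statement for the descendent series. Using the degeneration formula for stable pairs together with the structure of the virtual class, I would argue that for fixed combinatorial data — the integers $k_j$, the Hodge types of the insertions $\gamma_j$, and the pairing pattern among them — the partition function $Z_{\textup{PT}}^X(q\mid D)_\beta$ is a universal expression: a fixed $\QQ((q))$-linear combination of monomials in the intersection numbers of the $\gamma_j$ with one another, with the Chern classes of $X$, and with $\beta$. This generalizes the universal formulas on $S^{[n]}$ exploited in the first specialization of the paper. Granting universality, each instance of $\langle\L_k(D)\rangle_{n,\beta}^{X,\textup{PT}}=0$ becomes a polynomial identity in these intersection numbers, so it suffices to check it on any collection of 3-folds whose intersection data span the space of allowed configurations.

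The third step is the verification on test geometries. For insertions lying in the span of $(p,p)$-classes, toric 3-folds provide the test cases: the stationary toric constraints are already proven, the string, divisor and dilaton equations reduce non-stationary descendents of $H^{\ge 2}$ to stationary ones while $\ch_0,\ch_1$ collapse, and one then checks that toric 3-folds realize a spanning set of Chern and curve-class data. The genuinely new sector is the non-$(p,p)$ cohomology, where toric varieties give nothing. Here the two specializations proven in the paper furnish the anchoring cases — the cubic 3-fold in the line class detects the off-diagonal classes in $H^3$, and the $S\times\PP^1$ family detects arbitrary even surface insertions — and I would propagate these to general 3-folds using degeneration to products $C\times S$ and to 3-folds fibered over curves, which carry the required odd classes.

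The main obstacle is twofold and concentrated in the odd-cohomology sector. First, establishing universality precisely enough to survive degenerations of descendents of arbitrary (including odd) classes is delicate, since the degeneration formula distributes insertions across components and the bookkeeping of Künneth terms must stay compatible with the $T_k$ and $S_k$ corrections. Second, and more seriously, there is no known spanning family of 3-folds that simultaneously realizes all odd-cohomology configurations and has a computable PT descendent theory, so the test-case strategy may simply run out of geometries. The most promising way around this is to replace the last two steps by Joyce's wall-crossing and vertex-algebra formalism: one lifts the $\L_k$ to the homology of the stack of objects in $D^b(X)$, proves that the Virasoro constraint is preserved by the Lie bracket governing wall-crossing, and reduces to an initial condition computed from the surface case. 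The hard part there is setting up the framework on a 3-fold with correct orientation data and proving that the lifted $\L_k$ are genuinely compatible with Joyce's bracket — the conformal-element computation — which is exactly the structural input that the test-case approach sidesteps but cannot supply.
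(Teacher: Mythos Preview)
The statement you are attempting to prove is explicitly labelled as a \emph{conjecture} in the paper, and the paper does not claim to prove it in general. What the paper actually establishes are two special cases (Theorems~\ref{mainsurface} and~\ref{maincubic}) together with the easy $k=-1,0$ instances and the conditional Theorem~\ref{virGWimpliesvirPT2}. Your proposal should therefore be read as an outline of a research programme toward the full conjecture, not as a proof, and it has genuine gaps.

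The most serious gap is your second step. There is no known universality theorem for stable-pairs descendent invariants on arbitrary 3-folds analogous to the Ellingsrud--G\"ottsche--Lehn result for Hilbert schemes of points on surfaces; the latter is exactly what the paper exploits for the $S\times\PP^1$ case, and it works precisely because $S^{[n]}$ is smooth of expected dimension and because the recursion of \cite{egl} is surface-specific. For general $(X,\beta,n)$ the moduli space is singular, carries a genuine virtual class, and no degeneration argument is known to produce polynomial universality of $Z_{\textup{PT}}^X(q\mid D)_\beta$ in the intersection data of $X$. Granting this step would amount to assuming a result far stronger than the conjecture itself. You also acknowledge in your third step that no spanning family of test 3-folds with computable PT theory and arbitrary odd cohomology is known; this is not a technicality but a real obstruction, and the two cases proved in the paper (cubic in the line class, $S\times\PP^1$ for $\beta=n[\PP^1]$) are far too sparse to determine a putative universal polynomial. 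Finally, the first step is not innocent either: the paper does not establish Virasoro bracket relations $[\L_m,\L_k]=(m-k)\L_{m+k}$ for these operators on $\DDpt$, and the operators $S_k$ are not pure multiplication or derivation, so this would require its own argument. Your alternative via Joyce's vertex-algebra formalism is a reasonable direction for future work, but as you say, the compatibility of the lifted operators with the Lie bracket is precisely the missing structural input.
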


Similarly to the Virasoro conjecture on Gromov-Witten theory, the cases $k=-1, 0$ of the Virasoro conjecture are easy: $k=-1$ is a formal consequences of the rules for collapsing $\ch_0, \ch_1$; the case $k=0$ uses the divisor equation (proposition \ref{descendentequations}) and the vanishing of invariants whenever the Hodge degrees of the integrand don't match $(d_\beta, d_\beta)$ (note that the virtual fundamental class is algebraic).

\begin{proposition}
The Virasoro conjecture for stable pairs holds for $k=-1, 0$: for any projective smooth 3-fold and any $D\in \DDpt$, we have 
\[\left\langle \L_{-1}(D)\right\rangle_{n, \beta}^{X, \textup{PT}}=0 \textup{ and } \left\langle \L_0(D)\right\rangle_{n, \beta}^{X, \textup{PT}}.\]
\end{proposition}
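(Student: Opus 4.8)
**The plan is to verify the two cases directly using the descendent equations (Proposition \ref{descendentequations}) and a Hodge-degree (dimension) vanishing argument.**

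For $k=-1$, the plan is to unwind the definition $\L_{-1}=R_{-1}+T_{-1}+S_{-1}$. I would first record that, after collapsing $\ch_0,\ch_1$ via \eqref{collapsech}, the operator $R_{-1}$ is the derivation $\ch_i(\gamma)\mapsto \ch_{i-1}(\gamma)$, which lowers each descendent index by one and in particular sends $\ch_2(\gamma)$ to $\ch_1(\gamma)=0$ and $\ch_3(1)$ to $\ch_2(1)$. I would then compute $T_{-1}$: the first sum runs over $a+b=1$ and the second over $a+b=-1$ (hence empty), so $T_{-1}$ only involves products $\ch_a\ch_b(c_1)$ with $\{a,b\}=\{0,1\}$ or $\{a,b\}$ containing a negative index (which vanish), and after collapsing $\ch_0,\ch_1$ the whole of $T_{-1}$ collapses to an explicit multiple of the identity (or of low-index descendents) times $\int_\beta c_1 = d_\beta$. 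Finally $S_{-1}=\ch_0(\p)R_{-1}=-R_{-1}$ as already noted in the excerpt. The claim $\langle \L_{-1}(D)\rangle=0$ then becomes a purely formal identity: it is exactly the statement that the "derivation + multiplication" combination $R_{-1}+T_{-1}-R_{-1}$ kills any geometric descendent expression once $\ch_0,\ch_1$ are collapsed, i.e. it is a consequence of the collapsing rules \eqref{collapsech} together with the string equation $\ch_2(1)=0$ and the dilaton relation $\ch_3(1)=n-d_\beta/2$ from Proposition \ref{descendentequations}. So for $k=-1$ there is no geometry beyond those three equations.

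For $k=0$, the plan is to use the divisor equation together with a dimension obstruction. The operator $R_0$ acts on a generator $\ch_i(\gamma)$ with $\gamma\in H^{p,q}$ by multiplication by the scalar $(i+p-3)$, i.e. it multiplies a monomial by a constant reading off a degree; concretely, on a product $D=\prod_j\ch_{k_j}(\gamma_j)$ one gets $\sum_j(k_j+p_j-3)$ times $D$. This scalar is (half of) the difference between the holomorphic Hodge degree of $D$ and the holomorphic virtual dimension $d_\beta$. The idea is: if $D$ is \emph{not} of pure Hodge degree $(d_\beta,d_\beta)$ then $\langle D\rangle_{n,\beta}=0$ automatically (the virtual class is algebraic, so only the $(d_\beta,d_\beta)$-part pairs nontrivially), and if it \emph{is} of that degree then $R_0$ acts as zero — so in either case $\langle R_0(D)\rangle=0$. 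One must then check that $\langle (T_0+S_0)(D)\rangle=0$ separately. Here $S_0=-\tfrac1{24}\int_X c_1c_2$ is a scalar, and $T_0$ is (after collapsing) a combination of $\ch_a\ch_b(c_1)$ terms with $a+b=2$ and $\ch_a\ch_b(c_1c_2)$ with $a+b=0$; using $\ch_1=0$, $\ch_0(\gamma)=-\int_X\gamma$, and the divisor equation $\ch_2(\delta)=\int_\beta\delta$ from Proposition \ref{descendentequations}, the $T_0$-contribution telescopes to an explicit scalar multiple of $\langle D\rangle_{n,\beta}$, and one checks that $T_0+S_0$ together contribute zero — the relevant constant being $-\tfrac1{24}\int_X c_1c_2 + (\text{the } \ch_0(c_1c_2)\text{-term from } T_0)$, which cancels, while the $\ch_a\ch_b(c_1)$ part of $T_0$ with $\{a,b\}=\{0,2\}$ produces $\mp(\int_X c_1)(\int_\beta c_1)$-type terms that either vanish (since $\int_X c_1$ is of the wrong degree, $c_1\in H^2$ so $\ch_0(c_1)=0$) or combine to zero.

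**The main obstacle** I anticipate is purely bookkeeping: getting the signs, the factorials $(a+p-3)!$, and the Künneth-component conventions in $T_k$ and $S_k$ exactly right when $k=0$, and making sure that the "collapsing" of $\ch_0,\ch_1$ is done consistently (in particular that terms like $\ch_0(c_1)$ and $\ch_1(c_1c_2)$ are treated as genuinely zero). Once the operators are written out explicitly for $k=-1$ and $k=0$, the vanishing is forced by exactly the three facts highlighted in the excerpt: the collapsing rules \eqref{collapsech}, the divisor equation, and the algebraicity of $[P_n(X,\beta)]^{\mathrm{vir}}$ (which gives the Hodge-degree vanishing). I would present the argument by first reducing $\L_{-1}$ and $\L_0$ to these explicit forms, then invoking those three facts, rather than by any new geometric input.
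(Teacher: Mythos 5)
Your overall strategy (collapsing rules for $k=-1$; divisor equation plus Hodge-type vanishing from algebraicity of the virtual class for $k=0$) is exactly the route the paper intends, and your $k=-1$ argument is essentially right: $S_{-1}=\ch_0(\p)R_{-1}=-R_{-1}$ cancels $R_{-1}$, and $T_{-1}$ realizes to zero because each of its terms contains a $\ch_1$ (it does \emph{not} produce a multiple of $\int_\beta c_1$, as you suggest parenthetically; no $\ch_2$ appears for $k=-1$).

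The $k=0$ case, however, contains a genuine error in the bookkeeping, and the cancellation you describe is not the one that actually occurs. The eigenvalue of $R_0$ on a monomial $D=\prod_j\ch_{k_j}(\gamma_j)$ is $\sum_j(k_j+p_j-3)$, which is the first Hodge degree of $D$ itself, not (half of) its difference with $d_\beta$. Hence on the only piece that pairs nontrivially with the virtual class one gets $\langle R_0 D\rangle_{n,\beta}=d_\beta\,\langle D\rangle_{n,\beta}$, which is \emph{not} zero in general. Correspondingly, your claim that the $\{a,b\}=\{0,2\}$ terms of the $c_1$-part of $T_0$ vanish ``since $\ch_0(c_1)=0$'' is wrong: in those terms $\ch_0$ is applied to the degree-$6$ K\"unneth factor of $\Delta_\ast c_1$ (the components $c_1\otimes\p$ and $\p\otimes c_1$), giving $\ch_0(\p)=-1$, while the other factor is $\ch_2(c_1)$, which by the divisor equation contributes $\int_\beta c_1=d_\beta$ inside the bracket. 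Tracking signs and factorials, the $(2,0)$ and $(0,2)$ terms each contribute $-\tfrac{d_\beta}{2}\langle D\rangle$, so the $c_1$-part of $T_0$ supplies exactly $-d_\beta\langle D\rangle$, cancelling the $R_0$ contribution; separately, the $\ch_0\ch_0(c_1c_2)$ term of $T_0$ equals $+\tfrac1{24}\int_X c_1c_2$ and cancels $S_0=-\tfrac1{24}\int_X c_1c_2$. Thus your two intermediate assertions, $\langle R_0 D\rangle=0$ and $\langle (T_0+S_0)D\rangle=0$, are both false whenever $d_\beta\neq 0$, and a write-up along your lines would stall at exactly those points; the divisor equation is needed precisely to pair the $R_0$ term against the $c_1$-part of $T_0$, not merely to show the latter is harmless.
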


Strong evidence for the Virasoro conjecture is provided in \cite{virasorotoricPT}, where it's shown that the Virasoro operators on the stable pairs side and on the Gromov-Witten side are intertwined by the conjectural (stationary) GW/PT correspondence. 

\begin{theorem}[Theorem 5 in \cite{virasorotoricPT}]
\label{virGWimpliesvirPT}
Let \(X\) be a projective smooth 3-fold with only $(p,p)$-cohomology for which 
the following two properties are satisfied:
\begin{enumerate}
\item[(i)]
The stationary Virasoro constraints for the Gromov-Witten theory of $X$ hold.
\item[(ii)]
The stationary $\textup{GW}/\textup{PT}$ correspondence holds (see \cite[Section 0.6]{virasorotoricPT}). 
\end{enumerate}
Then, the stationary
Virasoro
constraints for the stable pairs theory of $X$ in conjecture \ref{conj: virasoro} hold.
\end{theorem}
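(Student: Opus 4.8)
The plan is to deduce the stable-pairs Virasoro constraints from the Gromov--Witten ones by transporting them across the GW/PT correspondence, exploiting the fact that the two families of Virasoro operators are intertwined by the correspondence transformation. First I would recall the precise shape of the stationary GW/PT descendent correspondence assumed in hypothesis (ii): after the change of variables $-q=e^{iu}$, it identifies the stationary PT partition function $Z^X_{\textup{PT}}(q\mid D)_\beta$ with a suitably normalized Gromov--Witten partition function, and this identification is implemented by a universal $\QQ$-linear transformation --- the correspondence matrix, which I will denote $\mathfrak C\colon \DDpt\to \mathbb D^X_{\textup{GW}}$ --- between the PT descendent algebra and the Gromov--Witten descendent algebra $\mathbb D^X_{\textup{GW}}$. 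I would also fix notation for the stationary GW Virasoro operators $\L^{\textup{GW}}_k$ acting on $\mathbb D^X_{\textup{GW}}$, whose annihilation of the GW partition function is exactly the content of hypothesis (i).

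The heart of the argument is a purely algebraic intertwining statement: the correspondence transformation conjugates the two families of Virasoro operators, namely
\[\mathfrak C\circ \L_k=\L^{\textup{GW}}_k\circ \mathfrak C\qquad (k\geq -1),\]
as operators $\DDpt\to \mathbb D^X_{\textup{GW}}$, once the substitution $-q=e^{iu}$ and the dilaton/string normalizations have been matched on both sides. I would prove this by splitting $\L_k=R_k+T_k+S_k$ and checking each piece separately after conjugation by $\mathfrak C$. The degree-shifting derivation $R_k$, with its weight $\prod_{j=0}^k(i+p-3+j)$, should match the principal linear part of $\L^{\textup{GW}}_k$; the quadratic term $T_k$, built from the K\"unneth components of $\Delta_\ast c_1$ and the factorials $(a+p^L-3)!(b+p^R-3)!$, should match the GW quadratic term; and $S_k$, built from $\Delta$ and encoding the descent contributions, should match the remaining GW terms. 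The matching rests on the explicit combinatorial form of $\mathfrak C$, whose factorials and Hodge-degree shifts by $3$ (reflecting $\dim X=3$) are engineered precisely to reproduce the $R_k$, $T_k$ and $S_k$ weights.

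Granting the intertwining, the theorem follows formally. For any $D\in\DDpt$, I apply the correspondence to $Z^X_{\textup{PT}}(q\mid \L_k(D))_\beta$: by hypothesis (ii) this equals, under $-q=e^{iu}$, the GW series with insertion $\mathfrak C(\L_k D)$. By the intertwining, $\mathfrak C(\L_k D)=\L^{\textup{GW}}_k(\mathfrak C D)$, and by hypothesis (i) --- the stationary GW Virasoro constraints --- the GW series of $\L^{\textup{GW}}_k(\mathfrak C D)$ vanishes identically. Since the substitution $-q=e^{iu}$ is invertible, $Z^X_{\textup{PT}}(q\mid \L_k(D))_\beta=0$, i.e. $\langle \L_k(D)\rangle^{X,\textup{PT}}_{n,\beta}=0$ for every $n$, which is Conjecture \ref{conj: virasoro} for $X$.

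The main obstacle is the intertwining relation of the second paragraph: verifying term by term that $\mathfrak C$ conjugates $\L^{\textup{GW}}_k$ to $\L_k$. This is delicate because both the GW Virasoro operators and the correspondence matrix carry intricate combinatorial data, and one must track the Hodge-degree shifts, the factorial weights, and the change of variables simultaneously. I expect the quadratic part $T_k$ and the descent part $S_k$ to be the hardest, since they involve the diagonal classes $\Delta_\ast c_1$ and $\Delta$ and must interact correctly with the string and dilaton structure of Gromov--Witten theory; the hypothesis that $X$ has only $(p,p)$-cohomology keeps the odd-cohomology signs $(-1)^{p^Lp^R}$ inert and considerably simplifies this bookkeeping.
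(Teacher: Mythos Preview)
Your overall strategy is correct and matches the approach of \cite{virasorotoricPT} as summarized in Section~\ref{sec: intertwining} of this paper: one establishes an intertwining between the PT and GW Virasoro operators via the correspondence transformation $\mathfrak C^\bullet$, and then GW Virasoro together with the GW/PT correspondence yields PT Virasoro.

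However, your clean operator identity $\mathfrak C\circ \L_k = \L_k^{\textup{GW}}\circ \mathfrak C$ oversimplifies what actually happens, and this is where the real work lies. The genuine intertwining (Theorem~12 of \cite{virasorotoricPT}) carries a scaling factor $(\iota u)^{-k}$ and involves a modified GW operator $\tilde L_k^{\textup{GW}}$: one has $\mathfrak C^\bullet \circ (R_k+T_k) = (\iota u)^{-k}\tilde L_k^{\textup{GW}}\circ \mathfrak C^\bullet$ as operators. Crucially, the $S_k$ piece does \emph{not} intertwine at the operator level. As Proposition~\ref{intertwiningSk} shows, $S_k$ matches the GW term $\frac{1}{2}T_k^0$ only \emph{after taking invariants}, and that step requires the $\alpha$-twisted string equation $L_{-1}^{\textup{GW}}[\alpha]$ on the GW side. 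Moreover, non-geometric descendents $(-1)!\ch_1(\gamma c_1)$ appear in the intermediate computations and must be handled. So your three-part splitting $\L_k=R_k+T_k+S_k$ is the right organizing principle, but the matching is not uniform across the pieces: your anticipation that $T_k$ and $S_k$ are the delicate parts is correct, and for $S_k$ in particular you should expect an argument at the level of brackets using the string equation rather than a purely algebraic operator identity.
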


Both the GW/PT correspondence and the Virasoro conjecture for Gromov-Witten are known for toric 3-folds, the first by work of Oblomkov, Okounkov, Pixton and Pandharipande \cite{toricGWPT, toricGWPTvertex} and the latter by Givental \cite{givental}. 

\begin{theorem}[Theorem 4 in \cite{virasorotoricPT}]
Conjecture \ref{conj: virasoro} holds when $X$ is a toric 3-fold and $D$ is stationary.
\end{theorem}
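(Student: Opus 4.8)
The plan is to obtain the statement as a direct corollary of Theorem~\ref{virGWimpliesvirPT}: since a smooth projective toric $3$-fold $X$ has only $(p,p)$-cohomology, that theorem applies, and it remains only to verify its two hypotheses for such $X$, namely (i) the stationary Virasoro constraints for the Gromov--Witten theory of $X$, and (ii) the stationary GW/PT correspondence for $X$ in the precise form of \cite[Section~0.6]{virasorotoricPT}. Both are available in the literature.

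For (ii): the Gromov--Witten/stable pairs descendent correspondence for toric $3$-folds is proved by Pandharipande and Pixton in \cite{toricGWPT}, and an explicit closed form of it was obtained through the descendent vertex of Oblomkov, Okounkov and Pandharipande in \cite{toricGWPTvertex}; together with virtual localization these results pin down both theories on any toric $X$ and match their descendent series. The only delicate point is to check that the normalization used there — the change of variable relating the stable pairs variable $q$ to the Gromov--Witten genus variable, and the sign/root-of-unity twist on descendents, which is vacuous here since all cohomology is of type $(p,p)$ — is literally the one assumed in \cite{virasorotoricPT}; this is bookkeeping, not geometry.

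For (i): a smooth projective toric variety has generically semisimple quantum cohomology, so by Givental's theorem \cite{givental} the Virasoro constraints hold for the Gromov--Witten theory of $X$, and in particular the stationary ones required in hypothesis~(i) hold. Here too the only work is translational: one must pass from the formulation of the Virasoro constraints in \cite{givental} to the stationary descendent formulation used in \cite{virasorotoricPT}, which is carried out using the string, divisor and dilaton equations of Gromov--Witten theory.

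Consequently the main obstacle is organizational rather than conceptual: every input is already known, and the substance of the proof is confirming that the statements established in \cite{toricGWPT, toricGWPTvertex, givental} are, up to normalization, exactly hypotheses (i) and (ii) of Theorem~\ref{virGWimpliesvirPT}. Granting this, Theorem~\ref{virGWimpliesvirPT} yields $\langle \L_k(D)\rangle^{X,\textup{PT}}_{n,\beta}=0$ for all $k\geq -1$, all $\beta\in H_2(X;\ZZ)$, all $n\in\ZZ$ and all stationary $D$, which is the claim.
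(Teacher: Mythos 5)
Your proposal matches the route the paper (and the cited source \cite{virasorotoricPT}) takes: this theorem is exactly Theorem~\ref{virGWimpliesvirPT} applied to a toric $3$-fold, whose hypotheses are supplied by the stationary GW/PT correspondence of \cite{toricGWPT, toricGWPTvertex} and Givental's Virasoro constraints \cite{givental}, toric $3$-folds having only $(p,p)$-cohomology. Your remarks about matching normalizations are the same bookkeeping carried out in \cite{virasorotoricPT}, so the argument is correct and essentially identical to the paper's.
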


\begin{remark}
We may define a modification of the Virasoro operators $\overline \L_k$ via the Hodge symmetry $(p,q)\leftrightarrow (q,p)$, i.e. by replacing all the appearances of the first Hodge index in the definition of $\L_k$ by the second Hodge index. See \cite[Section 2.10.]{getzler} for a similar observation in Gromov-Witten theory. A simple formula for the commutator $[\L_k, \overline \L_\ell]$ doesn't seem to exist.
\end{remark}

\subsection{Vanishing of descendents of $(p,0)$-classes } \noindent Let $\gamma\in H^{p,0}(X)$ with $p=2$ or $p=3$. Proposition \ref{descendentequations} shows that $\ch_2(\gamma)=0$ in $H^\ast(P_n(X,\beta))$. However, conjecture \ref{conj: virasoro} implies a much more general and surprising vanishing. Let $D\in \DDpt$ be arbitrary. We have
\[[\L_k, \ch_2(\gamma)]=R_k(\ch_2(\gamma))=\frac{(p-1+k)!}{(p-2)!}\ch_{2+k}(\gamma).\]
Since $\ch_2(\gamma)=0$ in $H^\ast(P_n(X, \beta))$, and assuming conjecture \ref{conj: virasoro} is true, it follows that
\[\left\langle \ch_{2+k}(\gamma)D\right\rangle_{n, \beta}^{X, \textup{PT}}=0\]
for every $k\geq -1$, $\gamma$ of type $(p,0)$ with $p\geq 2$ and $D\in \DDpt$.

\begin{conjecture}
\label{conj: vanishing}
Let $X$ be a simply-connected projective smooth 3-fold, let $k\geq 0$ and let $\gamma\in H^{p,0}(X)$ with $p=2$ or $p=3$. Then, for $D\in \DDpt$,
\[\left\langle \ch_{k}(\gamma)D\right\rangle_{n, \beta}^{X, \textup{PT}}=0.\]
\end{conjecture}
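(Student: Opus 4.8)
The plan is to reduce Conjecture \ref{conj: vanishing} to the case of toric threefolds, analogously to the strategy outlined for the Hilbert scheme specialization in the abstract, and then to bootstrap from the two easy Virasoro cases $k=-1,0$ together with the commutator identity $[\L_k,\ch_2(\gamma)]=\frac{(p-1+k)!}{(p-2)!}\ch_{2+k}(\gamma)$. The key observation is that $\ch_2(\gamma)=0$ as an operator/class (Proposition \ref{descendentequations}i)), so if one already knows the Virasoro constraints $\langle \L_k(D')\rangle=0$ for the particular insertions $D'=\ch_2(\gamma)D$, then $\langle \ch_{2+k}(\gamma)D\rangle=0$ follows for all $k\geq -1$, which is exactly the conjectured vanishing for exponents $\geq 2$; the exponent-$0$ and exponent-$1$ cases are handled directly by \eqref{collapsech} (indeed $\ch_0(\gamma)=-\int_X\gamma=0$ since $\gamma\in H^{<6}(X)$, and $\ch_1(\gamma)=0$). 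So the content is really: establish the Virasoro constraints $\langle \L_k(\ch_2(\gamma)D)\rangle^{X,\textup{PT}}_{n,\beta}=0$ for $(p,0)$-classes $\gamma$, even though such $D'$ are non-stationary and the known toric theorem covers only stationary insertions.

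First I would set up a cobordism/degeneration argument: the invariants $\langle \ch_{k}(\gamma)D\rangle^{X,\textup{PT}}_{n,\beta}$ depend, via the universal structure $\FF$ and standard arguments, only on the cobordism class of $(X,\beta)$ together with the restrictions of the cohomology insertions; since $H^{p,0}(X)=0$ for toric $X$ (toric threefolds have only $(p,p)$-cohomology and $h^{p,0}=0$ for $p>0$), the toric case carries no information about $(p,0)$-classes. Hence a pure reduction-to-toric is insufficient and one needs an independent input. The natural route is to find, for each simply-connected $X$ and each $\gamma\in H^{p,0}(X)$ ($p=2,3$), a \emph{deformation or degeneration} of $X$ — or a family over a base — in which $\gamma$ either survives as a $(p,0)$-class but the geometry simplifies, or one applies the degeneration formula for stable pairs to reduce to pieces whose $(p,0)$-cohomology is controlled. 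Alternatively, and more promisingly, one can try to prove Conjecture \ref{conj: vanishing} \emph{directly} from Proposition \ref{descendentequations}i) by an inductive argument on $k$: use the divisor and dilaton equations (the $k=0$ Virasoro constraint and Proposition \ref{descendentequations}iii)) to express $\langle\ch_{2+k}(\gamma)D\rangle$ in terms of lower-$k$ descendents of $\gamma$ together with Virasoro operators applied to $\ch_2(\gamma)D$, the latter vanishing class-theoretically.

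The cleanest formulation of the induction is the one already displayed in the excerpt, so I would structure the proof around proving the \emph{operator} statement $\langle \L_k(\ch_2(\gamma)\,D)\rangle^{X,\textup{PT}}_{n,\beta}=0$. Rewriting $\L_k(\ch_2(\gamma)D)=[\L_k,\ch_2(\gamma)]D+\ch_2(\gamma)\L_k(D)$, the second term is the realization of a product containing the zero class $\ch_2(\gamma)$ and hence integrates to $0$, while the first term equals $\frac{(p-1+k)!}{(p-2)!}\langle \ch_{2+k}(\gamma)D\rangle$. Thus the operator statement is \emph{equivalent} to Conjecture \ref{conj: vanishing} for exponents $\geq 2$, and is \emph{not} a theorem unless we assume Conjecture \ref{conj: virasoro} — which defeats the purpose. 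Therefore the genuinely new ingredient must be a geometric vanishing: I would invoke (or establish) that the virtual fundamental class of $P_n(X,\beta)$, being algebraic, pairs to zero with any tautological class of non-$(d_\beta,d_\beta)$ Hodge type, and then show by a Hodge-type bookkeeping that the descendent $\ch_{k}(\gamma)D$, when $\gamma\in H^{p,0}$, forces the integrand into a Hodge bidegree $(a,b)$ with $a>b$ for \emph{every} monomial completion — this is false in general for a single descendent but becomes true after using that $\ch_k(\gamma)$ for $\gamma\in H^{p,0}$ has Hodge type $(p+k-3,k-3)$, which is as ``holomorphic-heavy'' as possible. Combined with the fact that the pairing of a $(p,0)$-heavy class against an algebraic cycle vanishes, one gets the result. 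The main obstacle is precisely this last point: a product $\ch_k(\gamma)D$ can have balanced Hodge type if $D$ contributes extra $(0,q)$-pieces, so one must either restrict attention to the simply-connected case (where $H^{0,1}=0$ kills some of these, but $H^{0,2},H^{0,3}$ may survive) or supply a separate argument — most likely a reduced obstruction theory / cosection argument showing that the contribution of the holomorphic form $\gamma$ to the deformation theory produces a trivial factor, forcing the integral to vanish. That cosection-style vanishing, rather than any formal manipulation, is where the real work lies.
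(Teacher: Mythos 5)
The statement you are trying to prove is Conjecture \ref{conj: vanishing}: it is an open conjecture in the paper, not a theorem, and the paper attaches to it exactly one piece of reasoning, namely the conditional derivation you reproduce — since $\ch_2(\gamma)=0$ in $H^\ast(P_n(X,\beta))$ by Proposition \ref{descendentequations}, the commutator identity $[\L_k,\ch_2(\gamma)]=\frac{(p-1+k)!}{(p-2)!}\ch_{2+k}(\gamma)$ shows that Conjecture \ref{conj: virasoro} \emph{implies} the vanishing for all $k\geq 1$ (the cases $k=0,1$ being \eqref{collapsech}). Your proposal correctly identifies this implication and correctly diagnoses that it cannot be upgraded to an unconditional proof without new input; but the new input you then sketch is not supplied, so the proposal does not prove the statement. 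Concretely: (a) the reduction-to-toric route is dead on arrival, as you yourself note, because toric 3-folds have $h^{p,0}=0$ for $p>0$, and there is no analogue for general 3-fold stable pairs theory of the universality theorem that makes the toric reduction work in the Hilbert-scheme section of the paper; (b) the Hodge-bidegree argument (pairing an algebraic virtual class against a class of unbalanced type) fails precisely when $D$ contains descendents of $(0,q)$-classes, or more generally whenever the monomial $\ch_k(\gamma)D$ lands in balanced bidegree $(d_\beta,d_\beta)$ — which is the only case in which the statement has any content beyond degree reasons; and (c) the proposed cosection/reduced-obstruction-theory vanishing is named but never constructed, and you acknowledge that this is ``where the real work lies.''

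So the honest summary is: your first two paragraphs recover what the paper itself says about this statement (the conditional implication from the Virasoro conjecture plus the observation that the cycle-level vanishing $\ch_k(\gamma)\cap[P_n(X,\beta)]^{\textup{vir}}=0$ would be a stronger explanation), while the claimed路线 to an unconditional proof has a genuine gap at the crucial step. Since the paper offers no proof either — the statement is conjectural — the appropriate conclusion is that your write-up should be presented as motivation/evidence (as the paper does), not as a proof; as a proof it is incomplete, with the missing ingredient being exactly the geometric vanishing mechanism for holomorphic $2$- and $3$-forms in the stable-pairs obstruction theory that neither you nor the paper establishes.
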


One can also speculate that this numerical vanishing holds due to a stronger vanishing at the level of cycles:
\[\ch_{k}(\gamma)\cap [P_n(X, \beta)]^{\textup{vir}}=0.\]

\subsection{Virasoro for the Hilbert scheme of points on a surface}

\noindent One interesting specialization of the Virasoro constraints for the moduli space of stable pairs produces Virasoro constraints on the Hilbert scheme of points on a surface. This specialization was already considered in \cite[Section 6]{virasorotoricPT}.

Let $S$ be a non-singular and projective surface such that $H^1(S)=0$. We denote by $S^{[n]}$ the Hilbert scheme of points on $S$ parametrizing 0 dimensional subschemes of $S$ with length $n$. If we set $X=S\times \PP^1$ and $\beta=n[\PP^1]$, the minimal Euler characteristic of a stable pair in $X=S\times \PP^1$ with support in a curve of class $\beta=n[\PP^1]$ is $n$ and we have an isomorphism of schemes
\[P_n(S\times \PP^1, n[\PP^1])\cong S^{[n]}.\]
The isomorphism is defined by sending $\xi\in S^{[n]}$ to the stable pair \[\O_{S\times \PP^1}\to \O_{\xi\times \PP^1}.\]
Moreover, since $S^{[n]}$ is smooth and has the expected dimension
\[2n=\int_{n[\PP^1]}c_1(S\times \PP^1)\]
the virtual fundamental class of $P_n(S, n[\PP^1])$ is just the fundamental class. 

We define the algebra of descendents and the geometric realization of descendents in $S^{[n]}$ parallel to the stable pairs definitions.

\begin{definition}
Given a surface $S$, we let $\DD^S$ be the commutative algebra generated by
\[\{\ch_k(\gamma): k\geq 0, \gamma\in H^\ast(S)\}\]
subject to the linearity relations.
\end{definition}

\begin{definition}
Given a surface $S$ and $n\geq 0$, we denote by $\Sigma_n\subseteq S^{[n]}\times S$ the universal subscheme and we let $\pi_1: S^{[n]}\times S\to S^{[n]}$ and $\pi_2: S^{[n]}\times S\to S$ denote the projections onto the two factors. 

Given $k\in \ZZ_{\geq 0}$ and $\gamma\in H^{\ast}(S)$, we define the geometric realization of descendents by
$$\ch_k(\gamma)=(\pi_2)_\ast
\left(\ch_k\left(\O_{\Sigma_n}-\O_{S^{[n]}\times S}\right)\cdot \pi_1^\ast(\gamma) \right)\in H^\ast(S^{[n]})$$
\end{definition}

The stable pairs descendents in $P_n(S\times \PP^1, n[\PP^1])$ are determined by the Hilbert scheme descendents:
\begin{align*}
\ch_k^{\textup{PT}}(\gamma\times 1)=0\textup{ and }
\ch_k^{\textup{PT}}(\gamma\times \p)=\ch_k^{\textup{Hilb}}(\gamma)
\end{align*}
where $1, \p\in H^\ast(\PP^1)$ are the unit and point classes. In particular, the Virasoro constraints on $P_n(S\times\PP^1, n[\PP^1])$ are equivalent to constraints on integrals of descendents on $S^{[n]}$. We formulate these constraints now:

\begin{itemize}
\item For $k\geq -1$, define a derivation $R_k$ on $\DD^S$ by fixing its action on the generators: given $\gamma\in H^{p,q}(S)$, let
\begin{equation}
R_k(\ch_i(\gamma))=\left(\prod_{j=0}^k(i+p-2+j)\right)\ch_{i+k}(\gamma).
\end{equation}
\item The operator $T_k: \DD^S\to \DD^S$ is multiplication by a fixed element of $\DD^S$:
\begin{align}T_k&=\sum_{a+b=k+2}(-1)^{p^Lp^R}(a+p^L-2)!(b+p^R-2)!\ch_a\ch_b(1)\nonumber \\
&+\sum_{a+b=k}a!b!\ch_a\ch_b\left(\frac{c_1^2+c_2}{12}\right).\end{align}
We are using the abbreviation 
\[(-1)^{p^Lp^R}(a+p^L-2)!(b+p^R-2)!\ch_a\ch_b(1)\]
for
\[\sum_{i}(-1)^{p^L_ip^R_i}(a+p^L_i-2)!(b+p^R_i-2)!\ch_a(\gamma^L_i)\ch_b(\gamma^R_i)\]
where $\sum_{i}\gamma^L_i\otimes \gamma^R_i$ is the Kunneth decomposition of the diagonal class $\Delta\in H^\ast(S\times S)$ and $\gamma^L_i$, $\gamma^R_i$ have Hodge types $(p^L_i, q^L_i)$ and $(p^R_i, q^R_i)$, respectively.
\item For $\alpha \in H^\ast(S)$, we define the derivation $R_{-1}[\alpha]: \DD^S\to \DD^S$ by its action on the generators:
\[R_{-1}[\alpha](\ch_i(\gamma))=\ch_{i-1}(\alpha\gamma).\]
In particular, $R_{-1}[1]=R_{-1}$. For $k\geq -1$, we define the operators $S_k: \DD^S\to \DD^S$ by
\[S_k=(k+1)!\sum_{p_i^L=0} R_{-1}[\gamma_i^L]\ch_{k+1}(\gamma^R_i).\]
The sum runs over the terms $\gamma_i^L\otimes \gamma_i^R$ of the Kunneth decomposition of $\Delta\in H^\ast(S\times S)$ such that $p_i^L=0$. 
\end{itemize}

\begin{definition}
We define the operators $\L_k: \DD^S\to \DD^S$, for $k\geq -1$, by
\begin{equation}\L_k=\L_k^S=R_k+T_k+S_k.
\end{equation}
\end{definition}

One of the two main results of this paper is that indeed these operators impose universal constraints on descendent integrals on the Hilbert scheme of points of $S$, as predicted by the Virasoro conjecture for $P_n(S\times \PP^1, n[\PP^1])$.

\begin{theorem}
\label{main}
Let $S$ be a surface with $H^1(S)=0$ and let $D\in \DD^S$. Then
\[\int_{S^{[n]}} \L_k D=0.\]
\end{theorem}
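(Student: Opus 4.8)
The plan is to deduce Theorem~\ref{main} from the toric case of Conjecture~\ref{conj: virasoro} together with the existence of universal formulas for descendent integrals on $S^{[n]}$. The key point is that both sides of the identity $\int_{S^{[n]}}\L_k D=0$ are, in a suitable sense, \emph{universal} in $S$: they depend only on the intersection numbers $c_1^2=K_S^2$ and $c_2=\chi(S)$ (and, for classes $\gamma$ of positive degree appearing in $D$, on pairings of those classes against $c_1$ and against each other). Granting this, it suffices to verify the identity on a collection of surfaces whose Chern numbers span the relevant parameter space; since $\PP^2$ and $\PP^1\times\PP^1$ are toric with linearly independent vectors $(K_S^2,\chi(S_{})) = (9,3)$ and $(8,4)$, the toric case of the Virasoro conjecture for $P_n(S\times\PP^1, n[\PP^1])$ — which, via the dictionary $\ch_k^{\textup{PT}}(\gamma\times\p)=\ch_k^{\textup{Hilb}}(\gamma)$ and $\ch_k^{\textup{PT}}(\gamma\times 1)=0$, is exactly the statement of Theorem~\ref{main} for those $S$ — pins down the universal expression as identically zero.

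The steps, in order, are as follows. First, reduce to $D$ a monomial $\prod_j \ch_{k_j}(\gamma_j)$ with each $\gamma_j$ drawn from a fixed homogeneous basis of $H^\ast(S)$, and observe that $\L_k D$ is again a (finite) sum of such monomials with coefficients that are \emph{polynomial in $n$ and in the structure constants of $H^\ast(S)$} — this uses only the explicit definitions of $R_k, T_k, S_k$ and the fact that $c_1, c_2$, the diagonal class $\Delta$, and $\Delta_\ast 1$ have Künneth components expressible through such a basis. Second, invoke the universality of Hilbert-scheme descendent integrals: there is a universal polynomial expression, in $n$, in $K_S^2$, $\chi(S)$, and in the pairings $\int_S \gamma_i\cup\gamma_j$, $\int_S \gamma_i\cup c_1$, for $\int_{S^{[n]}}\prod_j\ch_{k_j}(\gamma_j)$, valid for all simply-connected (or $H^1=0$) surfaces $S$. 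This is the content of the second main ingredient advertised in the abstract; I would cite/prove it via the Nakajima-calculus presentation of $H^\ast(S^{[n]})$ and the fact that the Chern character of $\O_{\Sigma_n}$ is built universally from the boundary operator and tautological classes. Third, combine: $\int_{S^{[n]}}\L_k D$ is a universal polynomial $P_k\bigl(n; K_S^2, \chi(S); (\text{pairings})\bigr)$. Fourth, specialize $S$ to toric surfaces; by \cite{virasorotoricPT} (toric case) and the PT/Hilbert dictionary, $P_k$ vanishes for all $n$ on each toric $S$. Fifth, a Zariski-density / interpolation argument: the toric surfaces realize enough values of $(K_S^2,\chi(S))$ and enough configurations of curve classes that a polynomial identity holding on all of them must be the zero polynomial — hence $P_k\equiv 0$, giving the theorem for every $S$ with $H^1(S)=0$.

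The main obstacle is the precise formulation and proof of the universality statement in Step~2, together with ensuring that the \emph{same} universal polynomial governs both the toric and the general case — i.e.\ that no hidden dependence on finer invariants of $S$ (beyond Chern numbers and the chosen pairings) sneaks in. Concretely, one must control how descendents $\ch_k(\gamma)$ for $\gamma$ of positive degree interact in the Nakajima basis: while integrals of \emph{stationary} descendents over $S^{[n]}$ are classically known to be universal, one needs the statement for arbitrary $\gamma\in H^\ast(S)$ including $\gamma=1$, and one must check that the parameters appearing are exactly those that toric surfaces can make independent. A secondary subtlety is bookkeeping with odd cohomology: since $H^1(S)=0$ the surface has no odd classes, so $\DD^S$ is genuinely commutative and the signs $(-1)^{p^Lp^R}$ in $T_k$ are all $+1$, which simplifies matters — but one should still confirm the PT-to-Hilbert reduction $\ch_k^{\textup{PT}}(\gamma\times 1)=0$ is compatible with the operator $S_k$ (whose $p_i^L=0$ terms come precisely from the unit class of $S$). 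Once these are in place, Steps~1, 3, 4, 5 are formal.
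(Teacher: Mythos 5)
Your overall strategy (universality of descendent integrals on $S^{[n]}$, the toric case, and interpolation) is indeed the paper's strategy, but your Step 5 as stated does not work. The universal expression for $\int_{S^{[n]}}\L_k D$ is a polynomial, in general nonlinear, in $\int_S c_1^2$, $\int_S c_2$ and the pairings $\int_S\gamma_i\gamma_j$, $\int_S\gamma_i c_1$, so vanishing at the two points $(9,3)$ and $(8,4)$ — or even on all connected toric surfaces — does not force it to vanish identically: every connected toric surface satisfies $\int_S c_1^2+\int_S c_2=12\chi(\O_S)=12$, so connected toric data lies on a single affine line in the $(c_1^2,c_2)$-plane and is nowhere near Zariski dense in the parameter space. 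The paper circumvents this by allowing \emph{disconnected} surfaces: it first proves that the constraints are compatible with disjoint unions (corollary \ref{disconnected}, via the product formula \eqref{virasorodisconnected}), and then proves the density statement (proposition \ref{zariskidensity}) using unions of $N$ copies of $\PP^1\times\PP^1$ blown up $M$ times, which realize $(8N-M,4N+M)$ together with a Zariski dense set of pairings built from disjoint exceptional divisors. Without the disconnected-surface step your interpolation has too few data points. Relatedly, the universality you need is for $\langle\L_k D\rangle$, whose $T_k$ and $S_k$ terms sum over Künneth components of the diagonal weighted by factorials depending on the Hodge index $p$; to keep this universal one must express $\sum_{p^L_j=c}\gamma^L_j\gamma^R_j$ in terms of $c_1,c_2$ (lemma \ref{lem: identitieshrr}) — surface-dependent ``structure constants of $H^\ast(S)$'' are not among the allowed universal parameters, and this rewriting (which is exactly where the Hodge, rather than cohomological, grading is essential) is missing from your Step 1.

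The second gap is that $H^1(S)=0$ does not kill $H^{2,0}(S)\oplus H^{0,2}(S)$ (think of a K3 or any surface with $p_g>0$); your remark about ``no odd classes'' addresses signs but not these insertions. Descendents of $(2,0)$ and $(0,2)$ classes cannot be handled by direct toric interpolation at all, because toric surfaces carry no classes of those Hodge types while the operators $R_k,T_k,S_k$ genuinely depend on the index $p$. The paper devotes subsection \ref{subsec: mainnonhodge} to exactly this: an induction on the number of such insertions in which a dual pair $\alpha_l\in H^{0,2}(S)$, $\beta_l\in H^{2,0}(S)$ is traded for classes $\alpha\in H^0(T;\CC)$, $\beta\in H^4(T;\CC)$ spread over $N$ additional toric components with roots-of-unity coefficients, chosen so that the entire set of integrals is unchanged and the first Hodge indices ($p=0$ and $p=2$) are preserved — so the same universal polynomial of lemma \ref{universalvirasorohodge} applies and the evaluation is reduced to one already known to vanish; a further induction on the number of $\ch_{m_i}(1)$ insertions undoes the replacement of $1_S$ by the unit of the enlarged surface. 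Without an argument of this kind your proposal only establishes the theorem for $D$ in the subalgebra generated by descendents of $(p,p)$-classes.
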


This result, in the case that $S$ is a toric surface, follows from the Virasoro constraints for the stable pairs theory of toric 3-folds, \cite[Section 6]{virasorotoricPT}.

\begin{theorem}[Theorem 20 in \cite{virasorotoricPT}]
\label{maintoric}
Theorem \ref{main} holds when $S$ is a (connected) toric surface.
\end{theorem}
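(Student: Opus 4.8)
The plan is to deduce the statement from the Virasoro conjecture for the stable pairs theory of toric 3-folds in the stationary case (Theorem 4 of \cite{virasorotoricPT}), applied to the 3-fold $X=S\times\PP^1$ with $\beta=n[\PP^1]$. Since $S$ is a connected toric surface, $X$ is a smooth projective toric 3-fold with only $(p,p)$-cohomology, and the isomorphism $P_n(S\times\PP^1,n[\PP^1])\cong S^{[n]}$ (under which the virtual class is the fundamental class) identifies $\int_{S^{[n]}}(\,\cdot\,)$ with $\langle\,\cdot\,\rangle^{X,\textup{PT}}_{n,n[\PP^1]}$. The content of the proof is therefore to show that, under this identification and the descendent correspondence, the stable pairs operator $\L_k$ on $\DDpt$ restricts to the Hilbert scheme operator $\L_k^S$ on $\DD^S$.

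First I would record the descendent correspondence from the excerpt, $\ch_k^{\textup{PT}}(\gamma\times 1)=0$ and $\ch_k^{\textup{PT}}(\gamma\times\p)=\ch_k^{\textup{Hilb}}(\gamma)$. Writing a class $\delta\in H^\ast(X)$ in its K\"unneth form over $X=S\times\PP^1$, a descendent $\ch_a(\delta)$ survives in geometric realization precisely when the $\PP^1$-component of $\delta$ is the point class $\p$. The crucial consequence is that every generator $\ch_k^{\textup{Hilb}}(\gamma)$ lifts to $\ch_k^{\textup{PT}}(\gamma\times\p)$, which is a descendent of a class $\gamma\times\p\in H^{\geq 2}(X)$ and hence stationary; thus an arbitrary $D\in\DD^S$ lifts to a stationary $\widetilde D\in\DDpt$, so the stationarity hypothesis of the toric theorem is automatically satisfied. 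I would then match the three operators on surviving descendents, discarding any term containing a factor $\ch(\,\cdot\,\times 1)$ since these vanish after realization. For $R_k$ this is immediate: a class $\gamma\times\p$ of Hodge type $(p+1,q+1)$ produces the shift $i+(p+1)-3=i+p-2$, exactly the shift defining $R_k^S$. For $T_k$ and $S_k$ I would compute $c_1(X)=c_1(S)+2\p$ and $c_1(X)c_2(X)=2\p\,(c_1(S)^2+c_2(S))$ (using $\p^2=0$ and $H^{>4}(S)=0$), together with the K\"unneth decompositions of $\Delta_{X\ast}c_1(X)$, $\Delta_{X\ast}(c_1c_2)$ and $\Delta_X$, all of which factor as (a class on $S\times S$) $\boxtimes$ (a class on $\PP^1\times\PP^1$), where $\Delta_{\PP^1\ast}(1)=\p\otimes 1+1\otimes\p$ and $\Delta_{\PP^1\ast}(\p)=\p\otimes\p$. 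Only the $\p\otimes\p$ summand survives, which kills the $c_1(S)$ contribution in $T_k$ and leaves $\Delta_{S\ast}\big(\tfrac{c_1^2+c_2}{12}\big)$ and $\Delta_S$; for $S_k$ the selection $p_i^L=0$ together with $H^{0,q}(S)=0$ for $q>0$ (valid since $S$ is toric) leaves only the unit term $\gamma_i^L=1$, $\gamma_i^R=[\textup{pt}_S]$, matching $S_k^S=(k+1)!\,R_{-1}\ch_{k+1}(\p)$.

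The main obstacle is the bookkeeping of numerical factors and Koszul signs in the $T_k$ matching. I would check that the prefactor $-\tfrac12$ in the stable pairs $T_k$ combines with the factor $2$ from $c_1(\PP^1)=2\p$ to give $-1$, while the sign $(-1)^{p^Lp^R}$ with the shifted indices $p^L=p^L_S+1$, $p^R=p^R_S+1$ compares with the surface sign $(-1)^{p^L_Sp^R_S}$. Here I would use that each K\"unneth summand of $\Delta_S$ satisfies $p^L_S+p^R_S=2$, so that $(p^L_S+1)(p^R_S+1)=p^L_Sp^R_S+3$ and the sign changes by $-1$; this extra sign cancels the numerical $-1$, so the overall coefficient comes out to $+1$, precisely the coefficient in the first sum of $T_k^S$. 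The factorials match because $(a+p^L-3)!=(a+p^L_S-2)!$. An identical but simpler comparison (all classes of Hodge type $(2,2)$, so no signs) handles the $\tfrac{1}{24}c_1c_2$ term: the factor $\tfrac{1}{24}\cdot 2=\tfrac{1}{12}$ reproduces $\tfrac{c_1^2+c_2}{12}$ with coefficient $1$, as in $T_k^S$.

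Having established $\L_k=\L_k^S$ on surviving descendents, I would conclude that for any $D\in\DD^S$ with lift $\widetilde D\in\DDpt$ one has $\int_{S^{[n]}}\L_k^S D=\langle \L_k(\widetilde D)\rangle^{X,\textup{PT}}_{n,n[\PP^1]}$. Since $X=S\times\PP^1$ is toric and $\widetilde D$ is stationary, the right-hand side vanishes by Theorem 4 of \cite{virasorotoricPT}, which gives the claim. The only delicate points are the sign and coefficient reconciliations above; everything else is a direct translation through the descendent correspondence.
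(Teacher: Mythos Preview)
Your proposal is correct and follows exactly the route the paper indicates: the paper does not give an independent proof of this theorem but cites \cite[Section~6]{virasorotoricPT}, where the argument is precisely the translation you carry out (lift $D\in\DD^S$ to a stationary $\widetilde D\in\DDpt$ via $\ch_k(\gamma)\mapsto\ch_k(\gamma\times\p)$, match $R_k$, $T_k$, $S_k$ term-by-term through the K\"unneth decomposition of $\Delta_X=\Delta_S\boxtimes\Delta_{\PP^1}$, and invoke the toric stable-pairs Virasoro). Your sign/coefficient bookkeeping in the $T_k$ matching is correct; the one minor remark is that your appeal to $H^{0,q}(S)=0$ for $q>0$ in the $S_k$ step is true for toric $S$ but not actually needed, since the K\"unneth terms of $\Delta_X$ with $p^L=0$ are exactly those of the form $(e_j^L\times 1)\otimes(e_j^R\times\p)$ with $p_j^L=0$, which already match the surface $S_k^S$ term-for-term.
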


\subsection{Plan of the paper}

\noindent In section \ref{sec: intertwining} we explain how to adapt some arguments of \cite{virasorotoricPT} to get a more general version of theorem \ref{virGWimpliesvirPT} that allows (simply-connected) 3-folds with non-$(p,p)$-cohomology as long as we don't have insertions with $(0,p)$ classes. In particular, we explain the appearance of $S_k$ in proposition \ref{intertwiningSk}. This section is completely independent of the rest of the paper.

The two main results of this paper are verifications of conjecture \ref{conj: virasoro} in two instances:

\begin{theorem}
\label{mainsurface}
Let $S$ be a surface with $H^1(S)=0$ and $n\in \ZZ_{\geq 0}$. Conjecture \ref{conj: virasoro} holds when $X=S\times \PP^1$, $\beta=n[\PP^1]$ and the Euler characteristic is $n$, i.e.
\[\left\langle \L_k(D)\right\rangle_{n, n[\PP^1]}^{S\times \PP^1, \textup{PT}}=0.\]
\end{theorem}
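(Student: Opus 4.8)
The plan is to reduce Theorem~\ref{mainsurface} to Theorem~\ref{main} and then deduce Theorem~\ref{main} from the toric case (Theorem~\ref{maintoric}) via universality. The first reduction is essentially bookkeeping: under the isomorphism $P_n(S\times\PP^1,n[\PP^1])\cong S^{[n]}$ the virtual class is the fundamental class, and the stable pairs descendents are expressed through Hilbert scheme descendents by $\ch_k^{\textup{PT}}(\gamma\times 1)=0$ and $\ch_k^{\textup{PT}}(\gamma\times\p)=\ch_k^{\textup{Hilb}}(\gamma)$. So I would first check that the stable pairs operator $\L_k^{S\times\PP^1}$, once one substitutes these relations and uses the K\"unneth decomposition $\Delta_{S\times\PP^1}=\Delta_S\otimes(\1\times\p+\p\times\1)$ together with $c_1(S\times\PP^1)=c_1(S)\times\1+\1\times c_1(\PP^1)$ and the corresponding expression for $c_1c_2$, matches the Hilbert scheme operator $\L_k^S$ defined above. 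This is exactly the computation sketched in \cite[Section 6]{virasorotoricPT}; I would carry it out in detail, being careful about the shift in the Hodge index (the ``$-3$'' on the 3-fold side becoming ``$-2$'' on the surface side, since pushing forward along $\PP^1$ drops degree by $(1,1)$) and about the fact that terms $\ch_k^{\textup{PT}}(\gamma\times 1)$ die, which kills roughly half of the sums in $T_k$ and $S_k$. The upshot is that Theorem~\ref{mainsurface} is literally equivalent to Theorem~\ref{main}.

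For Theorem~\ref{main} itself, the strategy is to leverage the toric case by a universality/deformation argument. The key input is the existence of universal formulas for integrals of descendents on $S^{[n]}$: an integral $\int_{S^{[n]}}\prod_j\ch_{k_j}(\gamma_j)$ is a universal polynomial (depending on $n$, the $k_j$ and the way the $\gamma_j$ pair among themselves and against the canonical class) in the four numbers $\int_S 1\cdot[\text{pt}]$ normalizations together with the Chern numbers $c_1(S)^2$ and $c_2(S)$ and the intersection data of the chosen cohomology classes. More precisely, by the work on the ring structure of $\bigoplus_n H^\ast(S^{[n]})$ (Nakajima/Grojnowski, Lehn, Li--Qin--Wang, Ellingsrud--G\"ottsche--Lehn) any such descendent integral is determined by finitely many ``universal'' coefficients, and these coefficients are independent of $S$. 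I would phrase $\int_{S^{[n]}}\L_k D$ in this universal form: it becomes a polynomial expression $F(c_1^2,c_2,\ldots)$ in the Chern numbers and pairing data, with the same polynomial for every surface with $H^1(S)=0$.

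Then the argument concludes as follows. The polynomial $F$ vanishes whenever $(c_1(S)^2,c_2(S),\ldots)$ is realized by a toric surface, by Theorem~\ref{maintoric}. Since toric surfaces (e.g.\ blowups of $\PP^2$ and Hirzebruch surfaces, and disjoint unions thereof, which one can allow because the universal formulas are compatible with disjoint unions, or alternatively products of intersection numbers across components) realize a Zariski-dense set of values of the relevant Chern numbers and pairing parameters, a polynomial vanishing on all of them vanishes identically. Hence $\int_{S^{[n]}}\L_k D=0$ for \emph{every} surface $S$ with $H^1(S)=0$, which is Theorem~\ref{main}, and therefore Theorem~\ref{mainsurface}.

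The main obstacle I anticipate is making the universality statement precise and strong enough: I need to know not just that descendent integrals on $S^{[n]}$ are \emph{some} universal expression, but that they are \emph{polynomial} in a fixed finite list of invariants (the Chern numbers of $S$ and the pairing matrix of the inserted classes, including pairings with $c_1(S)$), with coefficients independent of $S$ and depending on the insertions only through combinatorial data. Establishing this cleanly — identifying the correct list of invariants, handling the classes $\gamma_j$ of mixed Hodge type, controlling how $c_1(S)$ and $c_2(S)$ enter, and verifying that toric surfaces (possibly disconnected, or with enough blowups) sweep out a dense subset of the corresponding parameter space — is the technical heart of the proof. The operators $R_k, T_k, S_k$ only involve $c_1$, $c_2$, the diagonal and the point class, so once the universality framework is in place the reduction to the toric case is formal; assembling that framework from the cohomology of Hilbert schemes of points is where the real work lies.
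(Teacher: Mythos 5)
Your overall strategy is the one the paper follows: reduce the stable pairs constraints on $P_n(S\times \PP^1,n[\PP^1])$ to constraints on $S^{[n]}$ via $\ch_k^{\textup{PT}}(\gamma\times 1)=0$, $\ch_k^{\textup{PT}}(\gamma\times \p)=\ch_k^{\textup{Hilb}}(\gamma)$, then prove the Hilbert scheme statement by combining EGL-type universality with the toric case and a Zariski-density argument over (necessarily disconnected) toric surfaces. For insertions of $(p,p)$-classes this is exactly the paper's proof (lemma \ref{universalvirasorohodge}, proposition \ref{zariskidensity}, corollary \ref{disconnectedtoric}, proposition \ref{basecase}), and your remark that disjoint unions are needed is on target, since connected toric surfaces satisfy $\int_S c_1^2+\int_S c_2=12$. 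However, two of the points you defer to ``the technical heart'' are not just technical, and one of them your plan as stated cannot deliver. First, universality of $\langle \L_k D\rangle^S_n$ is not formal from the EGL formulas: $T_k$ and $S_k$ involve the K\"unneth components of the diagonal weighted by the \emph{first Hodge index} (the factors $(a+p^L-2)!(b+p^R-2)!$ and the restriction $p^L_i=0$ in $S_k$), so one must know that $\sum_{p^L_j=r}\gamma^L_j\gamma^R_j$ is itself a universal expression in $c_1^2,c_2$; this is Hirzebruch--Riemann--Roch (lemma \ref{lem: identitieshrr}) and is precisely why the operators are defined through Hodge degrees -- the analogous sums with fixed cohomological degree admit no such expression, so ``the operators only involve $c_1$, $c_2$ and the diagonal'' glosses over a necessary step.

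Second, and more seriously, the density argument cannot handle descendents of $(2,0)$ and $(0,2)$ classes, which do occur here because only $H^1(S)=0$ is assumed (e.g.\ $S$ a K3 or a quintic surface). Toric surfaces, connected or not, have $h^{2,0}=h^{0,2}=0$, so no toric data point realizes $\int_S\alpha_l\beta_l=1$ with $\alpha_l\in H^{0,2}(S)$, $\beta_l\in H^{2,0}(S)$: your universal polynomial is only shown to vanish on the locus where all such pairings are zero, which does not force it to vanish at the point corresponding to $S$. The paper closes this gap by a double induction: it replaces $\alpha_l,\beta_l$ by classes $\alpha\in H^0(T;\CC)$, $\beta\in H^4(T;\CC)$ spread with $N$-th root-of-unity coefficients over $N$ auxiliary toric components of $T=S\sqcup E$, chosen so that the entire set of integrals (including $\int\alpha^j\beta$, $\int\alpha^jc_1^2$, $\int\alpha^jc_2$) is unchanged; since by lemma \ref{universalvirasorohodge} the bracket depends on the insertions only through their integrals and their first Hodge indices, which are unchanged ($0$ and $2$), the two brackets agree, and the new one vanishes by induction on the number of non-$(p,p)$ classes used; a further induction on the number of $\ch(1_S)$-insertions is then needed because the unit $\1$ of $T$ differs from $1_S$, together with equation \eqref{virasorodisconnected} to pass from $T$ back to $S$. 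Without some device of this kind your argument proves theorem \ref{main} (hence theorem \ref{mainsurface}) only for $D$ in the subalgebra generated by descendents of $(p,p)$-classes.
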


\begin{theorem}
\label{maincubic}
Conjecture \ref{conj: virasoro} holds when $X$ is a cubic 3-fold and $\beta\in H_2(X; \ZZ)$ is the line class. 
\end{theorem}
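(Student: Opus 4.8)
The plan is to compute the full stable pairs descendent theory of a cubic 3-fold $X\subset\PP^4$ in the line class $\beta=\ell$ explicitly, and then verify the Virasoro constraints of Conjecture \ref{conj: virasoro} directly on the resulting closed formulas. The starting point is the classical geometry of the Fano variety of lines $F(X)$, which for a smooth cubic 3-fold is a smooth projective surface. For small Euler characteristic $n$, the moduli spaces $P_n(X,\ell)$ should admit an explicit description: a stable pair supported on a line $L\subset X$ with $[L]=\ell$ is the data of a length-$(n-1)$ subscheme on $L\cong\PP^1$ together with the inclusion into $\O_L$ (since $\chi(\O_L)=1$), so $P_n(X,\ell)$ fibers over $F(X)$ with fiber $(\PP^1)^{[n-1]}\cong \PP^{n-1}$; equivalently $P_n(X,\ell)$ is a projective bundle over $F(X)$. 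I would make this identification precise, identify the universal sheaf $\FF$ in terms of the universal line and the tautological bundle on the projective bundle, and — crucially — identify the virtual class. Since $d_\ell=\int_\ell c_1(X)=2$ (as $c_1(X)=2H$ and $H\cdot\ell=1$), the virtual dimension is $2$, while $P_n(X,\ell)$ has actual dimension $2+(n-1)=n+1$; so for $n\ge 2$ one needs the virtual class, obtained from the obstruction theory of \cite{PTfoundations}, which here should be the Euler class of an explicit obstruction bundle pulled back from $F(X)$ (related to $H^1$ of the twisted ideal sheaf on $L$, governed by the normal bundle $N_{L/X}$).

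With $[P_n(X,\ell)]^{\mathrm{vir}}$ and $\FF$ in hand, the descendents $\ch_k(\gamma)$ become explicit cohomology classes on the projective bundle over $F(X)$, expressible via the projective bundle formula in terms of the hyperplane class of the bundle, Chern classes of the tautological/obstruction bundles, and classes pulled back from $F(X)$. Integration against the virtual class then reduces, after pushing forward along the $\PP^{n-1}$-fibers, to intersection numbers on the surface $F(X)$; these are completely known (e.g. $c_2(F(X))=27$, the intersection form, the class of the incidence divisors, and the relation to the intermediate Jacobian / the fact that $H^*(F(X))$ carries the cohomology of $X$ via the Abel–Jacobi / P. the universal line correspondence). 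The output is a closed formula for every partition function $Z_{\mathrm{PT}}^X(q\mid D)_\ell$ as an explicit rational function of $q$ with coefficients built from $F(X)$ intersection numbers; along the way this also confirms Conjecture \ref{conj: rationality} in this case.

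The final step is to feed these closed formulas into the operators $\L_k=R_k+T_k+S_k$ and check $\langle \L_k(D)\rangle_{n,\ell}^{X,\mathrm{PT}}=0$. Since $k=-1,0$ are already known in general, and the $R_k$, $T_k$, $S_k$ are built from derivations, multiplications by fixed quadratic expressions in descendents of $c_1,c_2,c_1^2$, and the Künneth components of the diagonal of $X$, it suffices to verify the constraint on a spanning set of monomials $D$; the recursive/grading structure of $R_k$ should, as in the $\PP^3$ computations behind \cite{virasorotoricPT}, reduce everything to finitely many base identities among $F(X)$ intersection numbers. One useful organizing principle: the cohomology of $X$ is $H^0\oplus H^2\oplus H^3\oplus H^4\oplus H^6$ with $H^3$ of dimension $10$ and of Hodge type $(2,1)+(1,2)$ (no $(p,0)$ part, so Conjecture \ref{conj: vanishing} is vacuous here), and the descendents of $H^3$ classes interact with $F(X)$ through the incidence correspondence, which is exactly the geometric content making $S_k$ and the odd part of $T_k$ nonzero — so the check genuinely tests the non-$(p,p)$ extension of the conjecture, not just its $(p,p)$ shadow.

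The main obstacle I expect is not the Virasoro bookkeeping but the precise determination of the virtual class and the universal sheaf on $P_n(X,\ell)$ for all $n$, i.e. controlling the obstruction bundle uniformly and matching it against the deformation theory of \cite{PTfoundations}; once the geometry is pinned down, the descendent integrals and the Virasoro verification are a (substantial but) mechanical computation on a single well-understood surface.
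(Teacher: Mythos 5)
Your proposal is correct and follows essentially the same route as the paper: identify $P_{n+1}(X,\ell)\cong \PP_{F(X)}(\Sym^n\S)$ as a projective bundle over the Fano surface of lines, pin down the universal sheaf and the obstruction theory of \cite{PTfoundations}, compute all descendents and partition functions as rational functions in $q$ built from intersection numbers on $F(X)$ (including the odd, $H^3$-insertions via the incidence correspondence), and then verify the finitely many Virasoro identities left after the string/divisor/dilaton reduction. The only small correction is that the obstruction bundle is not pulled back from $F(X)$: it is $\bigl(\O(\zeta_n+nc_1)\otimes\pi_n^\ast\Sym^{n-2}\S^\vee\oplus\pi_n^\ast\Lambda^2\S\bigr)^\vee$, twisted by the relative hyperplane class $\zeta_n$, though this does not change the strategy.
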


The natures of the two proofs are quite different. For the surface, theorem \ref{mainsurface} is formally equivalent to the Virasoro constraints for descendents in the Hilbert scheme of points on a surface, theorem \ref{main}. We give the proof of theorem \ref{main} in section \ref{sec:surface}. The basic idea of the proof is to reduce the general case to the toric case via the existence of universal  formulas for integration of descendents on Hilbert schemes of points, from \cite{egl}. Two interesting aspects of the proof are the need to allow disconnected surfaces and the role that the Hodge degrees play.

For the cubic 3-fold, we compute all the stable pairs invariants and then we check directly that the Virasoro constraints hold by verifying some identities. The computation of all the invariants is done in sections \ref{cubic3foldn1} (case $n=1$) and \ref{cubic3foldngeneral} (case $n>1$). The verification of the Virasoro constraints is done in section \ref{sec: cubic}. The computation of the invariants is as directly from the definitions as possible: we identify the moduli spaces (which are smooth), the virtual fundamental class and compute expressions for all the descendents.

\subsection*{Acknowledgements}
The author would like to thank his advisor R. Pandharipande for many useful discussions and for suggesting both the problems treated in this paper. Discussions with A. Oblomkov regarding the PT Virasoro operators and the PT/GW transformation were also extremely useful. 

This project has received funding from the European Research Council (ERC)
under the European Unions Horizon 2020 research and innovation programme
(ERC-2017-AdG-786580-MACI).

\section{Intertwining}
\label{sec: intertwining}

\noindent In this section we will very briefly explain what can be recovered of the intertwining in \cite{virasorotoricPT} between the Gromov-Witten Virasoro operators and the stable pairs Virasoro operators. This section is highly dependent on \cite{virasorotoricPT} and we'll use the notation from there. In particular, the reader should be aware of the definition of the stationary GW/PT correspondence, section 0.6, and the key intertwining statement, theorem 12.

\subsection{Intertwining between $R_k+T_k$ and $\tilde L_k^{\textup{\normalfont GW}}$}

\noindent The proof of theorem 12 in \cite{virasorotoricPT} can be entirely adapted to our more general situation to show that
\[\mathfrak{C}^\bullet \circ L_k^{\textup{PT}}(D)= (\iota u)^{-k}\tilde L_k^{\textup{GW}}\circ \mathfrak{C}^\bullet (D)\]
for any $D$ in the algebra $\DD^{X, p>0}_{\textup{PT}}$ generated by descendents $\ch_i(\gamma)$ where $\gamma\in H^{p,q}(X)$ for $p>0$. 

The necessary modifications for the proof of \cite[Theorem 12]{virasorotoricPT} are basically replacing every (complex) cohomological degree by the first Hodge degree. In particular, in the statements and proofs of propositions 16, 17 and 18 we replace every condition ``$\alpha\in H^{2p}(X)$'' by ``$\alpha\in H^{p, \ast}(X)$''. 

We note that descendents with $p=0$ are not treated in \cite{virasorotoricPT} and that's why we exclude them now. Indeed it seems that the key propositions in the proof of theorem 12, namely propositions 16, 17, 18 and 19, fail when we allow descendents of $(0,p)$ classes.

\subsection{Intertwining between $S_k$ and $T_k^0$}
\noindent The main difference between the Virasoro operators in definition \ref{def: virasoroop} and their specialization to toric varieties, in \cite[Definition 2]{virasorotoricPT}, is the operator $S_k$. In the toric case, $S_k$ specializes to $(k+1)!R_{-1}\ch_{k+1}(\p)$. This term is obtained by applying the transformation to the operator $T_k^0$ on the Gromov-Witten side, \cite[Equation (18)]{virasorotoricPT}. In the general case, $T_k^0$ is transformed via the GW/PT into $S_k$. Indeed we have

\[\frac{1}{2}T_k^0=(k+1)!\sum_{p_i^L=0}\colon\tau_0(\gamma_i^L)\tau_{k-1}(\gamma_i^R)\colon\]
where the sum runs over the terms $\gamma_i^L\otimes \gamma_i^R$ of the Kunneth decomposition of $\Delta\in H^\ast(X\times X)$ with $p_i^L=0$.

Given a class $\alpha\in H^\ast(X)$, we define the following operators on the Gromov-Witten side twisted by $\alpha$:
\begin{itemize}
\item We let $R_{-1}^{\textup{GW}}[\alpha]: \DD^X_{\textup{GW}}\to \DD^X_{\textup{GW}}$ be a derivation defined on the generators of the algebra by
\[R_{-1}^{\textup{GW}}[\alpha](\tau_j(\gamma))=\tau_{j-1}(\alpha\gamma).\]
\item We define a quadratic differential operator $B_0[\alpha]: \DD^X_{\textup{GW}}\to \DD^X_{\textup{GW}}$ by fixing its action on products of two generators:
\[B_0[\alpha](\ch_i(\gamma)\ch_j(\gamma'))=\delta_{i}\delta_j\int_{X}\alpha \gamma\gamma'.\]
Here $\delta_i$ represents the usual Kronecker delta, giving 1 if $i=0$ and 0 otherwise.
\item Finally, we define $L_{-1}^{\textup{GW}}[\alpha]: \DD^X_{\textup{GW}}\to \DD^X_{\textup{GW}}$ by
\[L^{\textup{GW}}_{-1}[\alpha]=\tau_0(\alpha)-R_{-1}^{\textup{GW}}[\alpha]+\frac{(\iota u)^2}{2}B^{0}[\alpha].\]
\end{itemize}

If $\alpha\in H^{0, q}(X)$, the operators $L_{-1}^{\textup{GW}}[\alpha]$ constrain the Gromov-Witten invariants, that is, we have
\[\left \langle L_{-1}^{\textup{GW}}[\alpha](D) \right \rangle^{X}_\beta=0.\]
When $\alpha=1$ this is just the usual string equation, see \cite{witten}. The general case can be shown with a minor modification of the usual proof of the string equation; the condition that $\alpha\in H^{0,q}(X)$ ensures a vanishing corresponding to \cite[Equation 2.38]{witten} in the case $\alpha=1$. 

To state the next proposition we introduce the following variation of the $S_k$ operator:
\[\tilde S_k=\sum_{p_i^L}R_{-1}[\gamma_i^L]\ch_{k+1}(\gamma_i^R)-(-1)!\ch_1(\gamma^L_i c_1)\ch_{k+1}(\gamma^R_i).\]
The non-geometric descendents $(-1)!\ch_1(\gamma)$ play an important role in the intertwining property of \cite{virasorotoricPT} and are explained there.

\begin{proposition}
\label{intertwiningSk}
For any $D\in \DD^{X, p>0}_{\textup{PT}}$, we have
$$\left\langle \mathfrak{C}^\bullet(\tilde S_k D)\right\rangle^{X, \textup{GW}}_\beta
=\frac{(\iota u)^{2-k}}{2}\left\langle T_k^0\mathfrak{C}^\bullet (D)\right\rangle^{X, \textup{GW}}_\beta.$$
\end{proposition}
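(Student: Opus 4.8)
The plan is to follow exactly the strategy used in \cite{virasorotoricPT} for the toric specialization, but now keeping track of the Künneth components of the diagonal that have first Hodge index zero. Both sides of the claimed identity are linear in $D$, so it suffices to treat a monomial $D=\prod_j \ch_{i_j}(\gamma_j)$ with all $\gamma_j$ of first Hodge index $p_j>0$. The operator $\tilde S_k$ is, by definition, a sum over Künneth terms $\gamma_i^L\otimes\gamma_i^R$ of $\Delta$ with $p_i^L=0$ of the composition ``multiply by $\ch_{k+1}(\gamma_i^R)$, then apply the derivation $R_{-1}[\gamma_i^L]$'', corrected by the non-geometric term $(-1)!\ch_1(\gamma_i^Lc_1)\ch_{k+1}(\gamma_i^R)$; the right-hand side involves $T_k^0$, which (as recorded just before the proposition) equals $2(k+1)!\sum_{p_i^L=0}{:}\tau_0(\gamma_i^L)\tau_{k-1}(\gamma_i^R){:}$. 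So the proposition is really the statement that, under the stationary GW/PT correspondence $\mathfrak{C}^\bullet$, applying $\tilde S_k$ on the PT side and then integrating is the same as first transforming $D$ and then applying the Gromov--Witten operator $T_k^0$.

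First I would isolate the ``inner'' factor: for a fixed Künneth term with $p_i^L=0$, I want to understand $\mathfrak{C}^\bullet\!\left(R_{-1}[\gamma_i^L]\big(\ch_{k+1}(\gamma_i^R)\,D\big) - (-1)!\ch_1(\gamma_i^Lc_1)\ch_{k+1}(\gamma_i^R)\,D\right)$. The key point is that $R_{-1}[\gamma_i^L]$ is (up to the correction term that matches the descendent-shift anomalies in the GW/PT dictionary) exactly the operator that on the Gromov--Witten side corresponds to $R_{-1}^{\textup{GW}}[\gamma_i^L]$; this is precisely the content of the $R_{-1}$-part of the intertwining from \cite[Theorem 12]{virasorotoricPT}, adapted in the previous subsection by replacing complex degree with first Hodge degree. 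Thus $\mathfrak{C}^\bullet\circ R_{-1}[\gamma_i^L]$ differs from $R_{-1}^{\textup{GW}}[\gamma_i^L]\circ\mathfrak{C}^\bullet$ only by terms controlled by $L_{-1}^{\textup{GW}}[\gamma_i^L]$ — and here is where the hypothesis $p_i^L=0$ is essential, since it is exactly the condition $\gamma_i^L\in H^{0,q}(X)$ under which $\langle L_{-1}^{\textup{GW}}[\gamma_i^L](D')\rangle^X_\beta=0$ by the twisted string equation stated above. Feeding this vanishing in, the $B_0[\gamma_i^L]$ and $\tau_0(\gamma_i^L)$ contributions get converted, inside the integral, into the remaining pieces of $T_k^0$. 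Summing over all Künneth terms with $p_i^L=0$ and matching combinatorial factors $(k+1)!$ against the explicit form of $T_k^0$ then yields the stated identity.

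The technical core — and the step I expect to be the main obstacle — is the bookkeeping in the second step: showing that, after using $\langle L_{-1}^{\textup{GW}}[\gamma_i^L]\rangle=0$, the expression $\langle R_{-1}^{\textup{GW}}[\gamma_i^L]\big(\mathfrak{C}^\bullet(\ch_{k+1}(\gamma_i^R)D)\big)\rangle$ plus the $\ch_1$-correction reorganizes precisely into $\tfrac{(\iota u)^{2-k}}{2}\langle {:}\tau_0(\gamma_i^L)\tau_{k-1}(\gamma_i^R){:}\,\mathfrak{C}^\bullet(D)\rangle$ with the right power of $\iota u$. This requires: (a) knowing how $\mathfrak{C}^\bullet$ sends the single descendent $\ch_{k+1}(\gamma_i^R)$ (with $p_i^R>0$, so it is covered by the adapted \cite{virasorotoricPT} machinery) to a combination of $\tau_{k-1}(\gamma_i^R)$ plus lower/higher descendents, and (b) checking that the ``normal ordering'' ${:}\ {:}$ on the GW side — which removes the self-contraction of $\tau_0(\gamma_i^L)$ with $\tau_{k-1}(\gamma_i^R)$ — is exactly accounted for by the $B_0[\gamma_i^L]$ term inside $L_{-1}^{\textup{GW}}[\gamma_i^L]$ and by the non-geometric $(-1)!\ch_1$ correction in $\tilde S_k$. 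Once the degrees/powers of $u$ are pinned down by the Hodge-degree grading (the first Hodge index plays the role that complex cohomological degree played in \cite{virasorotoricPT}), the identity should follow; but verifying that the correction terms cancel on the nose, rather than merely up to something that happens to integrate to zero, is the delicate part and will need the twisted string equation applied carefully term by term.
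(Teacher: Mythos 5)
Your outline follows the same route as the paper: the corrected $R_{-1}[\alpha]$ intertwining (the adaptation of \cite[Proposition 11]{virasorotoricPT}), the twisted string equation $\langle L_{-1}^{\textup{GW}}[\gamma^L](\,\cdot\,)\rangle=0$ for $\gamma^L\in H^{0,q}(X)$, and the final sum over K\"unneth terms with $p^L=0$ matched against $\tfrac12 T_k^0=(k+1)!\sum_{p_i^L=0}\colon\tau_0(\gamma_i^L)\tau_{k-1}(\gamma_i^R)\colon$. However, at exactly the point you flag as the ``technical core'' there is a genuine gap: you leave open how $\mathfrak{C}^\bullet$ treats the factor $\ch_{k+1}(\gamma^R)$, anticipating ``a combination of $\tau_{k-1}(\gamma^R)$ plus lower/higher descendents'' whose cancellation against normal-ordering and correction terms would need delicate bookkeeping, but you give no mechanism for that cancellation. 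The missing observation, which is what makes the argument close, is that no such cancellation is needed: since $\gamma^L\in H^{0,q}(X)$, its K\"unneth partner satisfies $\gamma^R\in H^{3,3-q}(X)$, i.e.\ has first Hodge index $3$, while every insertion of $D\in \DD^{X,p>0}_{\textup{PT}}$ has $p>0$; hence $\ch_{k+1}(\gamma^R)$ cannot bump with $D$ in the (Hodge-adapted) correspondence, so
\[
\mathfrak{C}^\bullet(\ch_{k+1}(\gamma^R)D)=\mathfrak{C}^\circ(\ch_{k+1}(\gamma^R))\,\mathfrak{C}^\bullet(D)=(\iota u)^{1-k}\,\tau_{k-1}(\gamma^R)\,\mathfrak{C}^\bullet(D),
\]
where the single clean term appears because the correction terms in $\mathfrak{C}^\circ$ involve $c_1\gamma^R$ (and higher multiples), which vanish since $\gamma^R$ already has top first Hodge index on a $3$-fold. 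This identity is what produces the lone $\tau_{k-1}(\gamma^R)$ and pins down the power $(\iota u)^{2-k}$.

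With that in hand, the rest of your plan is exactly the paper's computation, run in the opposite direction: write $\langle\tau_0(\gamma^L)\tau_{k-1}(\gamma^R)\mathfrak{C}^\bullet(D)\rangle$ as $(\iota u)^{k-1}\langle\tau_0(\gamma^L)\mathfrak{C}^\bullet(\ch_{k+1}(\gamma^R)D)\rangle$, use the $\gamma^L$-string equation to replace $\tau_0(\gamma^L)$ by $R_{-1}^{\textup{GW}}[\gamma^L]-\tfrac{(\iota u)^2}{2}B^0[\gamma^L]$ inside the bracket, and then apply the intertwining $\mathfrak{C}^\bullet\circ\bigl(R_{-1}^{\textup{PT}}[\gamma^L]-(-1)!\ch_1(c_1\gamma^L)\bigr)=(\iota u)\bigl(R_{-1}^{\textup{GW}}[\gamma^L]-\tfrac{(\iota u)^2}{2}B^0[\gamma^L]\bigr)\circ\mathfrak{C}^\bullet$ to land on the $\tilde S_k$ term; summing over K\"unneth terms with $p_i^L=0$ gives the statement. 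So your proposal is salvageable and on the right track, but as written it defers the decisive step to an unproven cancellation, when in fact the correct argument shows those extra terms are identically zero.
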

\begin{proof}
The proof of \cite[Proposition 11]{virasorotoricPT} is easily adapted to show that we have, for any $\alpha\in H^{0,q}(X)$, the following intertwining:
\[\mathfrak{C}^\bullet\circ \left(R_{-1}^{\textup{PT}}[\alpha]-(-1)!\ch_1(c_1\alpha)\right)=(\iota u)\left(R_{-1}^{\textup{GW}}[\alpha]-\frac{(\iota u)^2}{2}B^0[\alpha]\right)\circ\mathfrak{C}^\bullet.\]

Consider now a term $\gamma^L\otimes \gamma^R$ of the Kunneth decomposition with $\gamma^L\in H^{0, q}(X)$ and $\gamma^R\in H^{3, 3-q}(X)$. Since $D\in \DD^{X, p>0}_{\textup{PT}}$, the descendent $\ch_{k+1}(\gamma^R)$ doesn't bump $D$, hence
\[\mathfrak{C}^\bullet(\ch_{k+1}(\gamma^R)D)=\mathfrak{C}^\circ(\ch_{k+1}(\gamma^R))\mathfrak{C}^\bullet(D)=(\iota u)^{-k+1}\tau_{k-1}(\gamma^R)\mathfrak{C}^\bullet(D).\]
Combining the previous observations with the $\gamma^L$-string equation we get
\begin{align*}
\left\langle \tau_0(\gamma^L)\tau_{k-1}(\gamma^R)\mathfrak C^\bullet(D)\right\rangle&^{\textup{GW}}_\beta
=(\iota u)^{k-1}\left\langle \tau_0(\gamma^L)\mathfrak C^\bullet(\ch_{k+1}(\gamma^R)D)\right\rangle^{\textup{GW}}_\beta\\
&=(\iota u)^{k-1}\left\langle \left(R_{-1}^{\textup{GW}}[\gamma^L]-\frac{(\iota u)^2}{2}B^0[\gamma^L]\right)\mathfrak C^\bullet(\ch_{k+1}(\gamma^R)D)\right\rangle^{\textup{GW}}_\beta\\
&=(\iota u)^{k-2}\left\langle
\mathfrak C^\bullet \left( R_{-1}^{\textup{PT}}[\gamma^L]\ch_{k+1}(\gamma^R)D\right.\right.\\
&\left.\left.\quad \quad -(-1)!\ch_1(\gamma^L c_1)\ch_{k+1}(\gamma^R)D\right)\right
\rangle^{\textup{GW}}_\beta
\end{align*}

The proposition then follows by summing over the terms $\gamma_i^L\otimes \gamma_i^R$ of the Kunneth decomposition with $p_i^L=0$.
\end{proof}

\subsection{Extending theorem \ref{virGWimpliesvirPT}}

\noindent The previous discussion provides the adaptations needed to extend theorem \ref{virGWimpliesvirPT} of \cite{virasorotoricPT}. The original result says that for a 3-fold with only $(p,p)$-cohomology (as is the case of toric 3-folds) the stationary Gromov-Witten Virasoro combined with the stationary GW/PT correspondence imply the stationary stable pairs Virasoro. We extend this to any simply-connected 3-fold and to the algebra $\DD^{X, p>0}_{\textup{PT}}$.

\begin{theorem}
\label{virGWimpliesvirPT2}
Let \(X\) be a projective smooth simply-connected 3-fold for which 
the following two properties are satisfied:
\begin{enumerate}
\item[(i)]
The stationary Virasoro constraints for the Gromov-Witten theory of $X$ hold.
\item[(ii)]
The stationary $\textup{GW}/\textup{PT}$ correspondence holds (see \cite[Section 0.6]{virasorotoricPT}). 
\end{enumerate}
Then for any $k\geq -1$, $n\in \ZZ$, $\beta\in H_2(X; \ZZ)$ and $D\in \DD^{X, p>0}_{\textup{PT}}$ we have
\[\langle \L_k(D)\rangle_{n, \beta}^{X, \textup{PT}}=0.\]
\end{theorem}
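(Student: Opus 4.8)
The statement we want is Theorem~\ref{virGWimpliesvirPT2}, and the strategy is to combine the two intertwining results established above --- the intertwining of $R_k+T_k$ with $\tilde L_k^{\textup{GW}}$, and Proposition~\ref{intertwiningSk} intertwining $S_k$ with (roughly) $T_k^0$ --- with the hypotheses (i) and (ii). The first step is to recall from \cite[Section~0.6, Theorem~12]{virasorotoricPT} that the stationary GW/PT correspondence asserts precisely that the operator $\mathfrak{C}^\bullet$ identifies the stable pairs partition function with the Gromov--Witten one (after the appropriate variable change $q=-e^{\iota u}$), so that $\langle D\rangle^{X,\textup{PT}}$ and $\langle \mathfrak{C}^\bullet(D)\rangle^{X,\textup{GW}}$ carry the same information for $D\in \DD^{X,p>0}_{\textup{PT}}$. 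The second step is to combine the two intertwining identities: on the Gromov--Witten side the full Virasoro operator $\tilde L_k^{\textup{GW}}$ decomposes as the ``$R+T$'' part plus the $T_k^0$ part, and matching this against $\mathfrak{C}^\bullet\circ\L_k^{\textup{PT}} = \mathfrak{C}^\bullet\circ(R_k+T_k+S_k)$ shows that $\mathfrak{C}^\bullet\circ\L_k^{\textup{PT}}(D)$ corresponds, up to the factor $(\iota u)^{-k}$, to $\tilde L_k^{\textup{GW}}\circ\mathfrak{C}^\bullet(D)$ --- here one has to be slightly careful, using Proposition~\ref{intertwiningSk} only \emph{after} pairing against $[\overline M]^{\textup{vir}}$, since the intertwining for $S_k$ is stated as an identity of integrals, not of operators, and similarly one must absorb the non-geometric $(-1)!\ch_1(\cdots)$ corrections that distinguish $\tilde S_k$ from $S_k$.

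The third step is then immediate: by hypothesis (i), the stationary Gromov--Witten Virasoro constraints give $\langle \tilde L_k^{\textup{GW}}(D')\rangle^{X,\textup{GW}}_\beta = 0$ for all stationary $D'$, in particular for $D' = \mathfrak{C}^\bullet(D)$; transporting this back through the correspondence yields $\langle \L_k(D)\rangle^{X,\textup{PT}}_{n,\beta}=0$. One should record separately the low cases: for $k=-1,0$ the claim is already Proposition~\ref{conj: virasoro} (the easy cases), so the content is genuinely $k\geq 1$, where the powers of $(\iota u)$ appearing in the intertwining are the ones that must be tracked --- but since both sides are being set to zero, these nonzero scalar factors are harmless.

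\textbf{The main obstacle.}
The delicate point is not the logical skeleton --- which is a short diagram chase --- but the bookkeeping that makes the $S_k$ intertwining compatible with the $R_k+T_k$ intertwining. The $R_k+T_k$ intertwining of \cite[Theorem~12]{virasorotoricPT} is an identity of \emph{operators} on $\DD^{X,p>0}_{\textup{PT}}$, whereas Proposition~\ref{intertwiningSk} holds only after integration, and moreover it involves $\tilde S_k$ (with the extra $(-1)!\ch_1(\gamma_i^L c_1)\ch_{k+1}(\gamma_i^R)$ terms) rather than $S_k$ itself. So the real work is to check that, when everything is paired against the virtual class, the contributions of these correction terms either vanish or are already accounted for by the $T_k$-part of the first intertwining --- i.e. that the ``$(-1)!\ch_1$'' terms hidden in $\mathfrak{C}^\bullet$ on the $R_k+T_k$ side cancel precisely the ones introduced by $\tilde S_k$. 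This is exactly the kind of term-matching carried out in \cite{virasorotoricPT} for the toric case, and the claim is that it goes through verbatim once ``$(p,p)$'' is relaxed to ``first Hodge degree $p>0$'', because the $(0,q)$-descendents --- the only ones whose intertwining behaviour is genuinely new --- enter only through the $\gamma_i^L$ factors in $S_k$, which are handled by the $\gamma^L$-string equation rather than by the problematic Propositions 16--19. I would therefore present the proof as: (a) state the combined intertwining $\langle\mathfrak{C}^\bullet(\L_k D)\rangle^{\textup{GW}}_\beta = (\iota u)^{-k}\langle \tilde L_k^{\textup{GW}}\mathfrak{C}^\bullet(D)\rangle^{\textup{GW}}_\beta$ as a lemma, deduced from \cite[Theorem~12]{virasorotoricPT} and Proposition~\ref{intertwiningSk} with the term-matching indicated; (b) invoke (i) and (ii); (c) conclude.
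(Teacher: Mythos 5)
Your proposal follows essentially the same route as the paper: the paper's proof of this theorem is exactly the combination of the adapted operator-level intertwining of $R_k+T_k$ with the Gromov--Witten side (the Hodge-degree version of \cite[Theorem 12]{virasorotoricPT}, valid on $\DD^{X,p>0}_{\textup{PT}}$), Proposition \ref{intertwiningSk} matching $S_k$ with $T_k^0$ after integration via the $\gamma^L$-string equation, and hypotheses (i)--(ii), following the argument for Theorem 5 of \cite{virasorotoricPT}. Your identification of the delicate point --- that the $S_k$ intertwining only holds after pairing with the virtual class and that the non-geometric $(-1)!\ch_1$ corrections in $\tilde S_k$ must cancel against those appearing in the $R_k+T_k$ intertwining --- is precisely the bookkeeping the paper delegates to \cite{virasorotoricPT}.
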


Although the stationary GW/PT correspondence allows descendents in the larger stationary algebra $\DD^{X+}_{\textup{PT}}\supseteq \DD^{X, p>0}_{\textup{PT}}$, we were not able to prove an adequate intertwining statement in the presence of descendents of $(0,q)$-classes. Note that if conjecture \ref{conj: vanishing} holds then we automatically have $\langle \L_k(D)\rangle_{n, \beta}^{X, \textup{PT}}=0$ when $D$ contains descendents of $(0,2)$ or $(0,3)$ classes since all the terms in the expansion of $\langle \L_k(D)\rangle_{n, \beta}^{X, \textup{PT}}$ will also contain such descendents.

In the two examples discussed in the paper, the cubic 3-fold and $S\times \PP^1$ with $\beta=n[\PP^1]$, these issues don't exist. In the cubic 3-fold $H^{0,2}(X)=H^{0,3}(X)=0$ and in the surface case we have $\ch_k(\gamma)=0$ for $\gamma\in H^{0,q}(X)$.

\section{Virasoro constraints for the Hilbert scheme of points of a surface}
\label{sec:surface}

\noindent In this section we will give the proof of theorem \ref{main}. The key idea is to use the existence of universal formulas for integrals of descendents in the Hilbert scheme, in the spirit of \cite{egl}, to reduce to the toric case, theorem \ref{maintoric}. The analysis of the universal formulas and their interaction with the Virasoro operators is done in \ref{subsec: universal}. Proposition \ref{zariskidensity} is the ingredient to show that disconnected toric surfaces provide enough data to show the vanishing in general; in \ref{subsec: disconnected} we explain how to deal with disconnected surfaces. Finally, the actual argument for the proof of theorem \ref{main} is given in subsections \ref{subsec: mainhodge} and \ref{subsec: mainnonhodge}. The first treats the case where $D\in \DD^S$ only contains descendents of $(p,p)$ classes and follows almost immediately from the previous steps. In subsection \ref{subsec: mainnonhodge} we allow non-$(p,p)$ insertions. The key trick here is to replace the $(0,2)$ and $(2,0)$ insertions by $(0,0)$ and $(2,2)$ insertions, respectively. For this we need once again to consider additional connected components.

\subsection{Disconnected surfaces}
\label{subsec: disconnected}

\noindent Suppose that $S$ is a disconnected surface and admits a decomposition $S=S_1\sqcup S_2$. We'll describe the descendents of $S$ in terms of descendents of $S_1$ and $S_2$ and we'll conclude that if theorem \ref{main} holds for $S_1$ and $S_2$ then it also holds for $S$. 

The cohomology of $S$ is the direct sum
\[H^\ast(S)=H^\ast(S_1)\oplus H^\ast(S_2).\]
If $(\gamma_1, \gamma_2)\in  H^\ast(S_1)\oplus H^\ast(S_1)$ we'll denote the corresponding class by $\gamma_1+\gamma_2\in H^\ast(S)$. We have 
\[\DD^{S}=\DD^{S_1}\otimes \DD^{S_2}\]
and, given $D_i\in \DD^{S_i}$, we denote by $D_1\otimes D_2\in \DD^S$ the corresponding element.

\begin{lemma}
If $S=S_1\sqcup S_2$ then \[\L_k^S: \DD^{S_1}\otimes \DD^{S_2}=\DD^S\to \DD^S = \DD^{S_1}\otimes \DD^{S_2}\]
is given by 
\[\L_k^S=\textup{id}_{\DD^{S_1}}\otimes \L_k^{S_2}+\L_k^{S_1}\otimes \textup{id}_{\DD^{S_2}}.\]
\end{lemma}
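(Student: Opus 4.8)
The plan is to decompose each of the three operators $R_k$, $T_k$, $S_k$ individually according to the splitting $\DD^S = \DD^{S_1}\otimes\DD^{S_2}$, and then add the results. The basic mechanism is that all the structure needed to define $\L_k^S$ — the cohomological/Hodge grading, the Chern classes $c_1,c_2$ of $S$, and the Künneth decomposition of the diagonal $\Delta\in H^*(S\times S)$ — is compatible with the disjoint union decomposition. Concretely, $H^*(S\times S) = \bigoplus_{i,j\in\{1,2\}} H^*(S_i)\otimes H^*(S_j)$, and the diagonal class $\Delta_S$ is the image of $\Delta_{S_1}\oplus\Delta_{S_2}$ under the inclusion of the $i=j$ summands; the cross terms $H^*(S_1)\otimes H^*(S_2)$ and $H^*(S_2)\otimes H^*(S_1)$ do not appear. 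Similarly $c_1(S) = c_1(S_1) + c_1(S_2)$ and $c_2(S) = c_2(S_1) + c_2(S_2)$ under $H^*(S) = H^*(S_1)\oplus H^*(S_2)$, with all mixed products vanishing since a class supported on $S_1$ times a class supported on $S_2$ is zero.

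First I would treat $R_k$. It is a derivation, and its value on a generator $\ch_i(\gamma)$ depends only on $i$ and the first Hodge index $p$ of $\gamma$; if $\gamma = \gamma_1 + \gamma_2$ with $\gamma_a\in H^*(S_a)$ of pure Hodge type, then $R_k^S(\ch_i(\gamma_1+\gamma_2)) = R_k^{S_1}(\ch_i(\gamma_1)) + R_k^{S_2}(\ch_i(\gamma_2))$ — here $\ch_i(\gamma_1)$ is identified with $\ch_i(\gamma_1)\otimes 1$ in $\DD^{S_1}\otimes\DD^{S_2}$ and similarly for $\gamma_2$. Since a derivation on a tensor product of algebras that acts as a derivation on each factor is determined by its values on generators, this gives $R_k^S = R_k^{S_1}\otimes\textup{id} + \textup{id}\otimes R_k^{S_2}$. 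Next, for $T_k$, which is multiplication by a fixed element: write out the two sums defining $T_k^S$. In the first sum, $\sum_i \gamma_i^L\otimes\gamma_i^R$ is the Künneth decomposition of $\Delta_S$, which by the above splits as (Künneth of $\Delta_{S_1}$) plus (Künneth of $\Delta_{S_2}$), with no cross terms; hence $\ch_a\ch_b(1)$ — meaning the corresponding bilinear expression — splits as the $S_1$-contribution (living in $\DD^{S_1}\otimes 1$) plus the $S_2$-contribution. In the second sum, $\frac{c_1^2+c_2}{12}$ on $S$ is the sum of the corresponding classes on $S_1$ and $S_2$ with no mixed term, so $\ch_a\ch_b\!\left(\frac{c_1^2+c_2}{12}\right)$ likewise splits. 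Therefore the element $T_k^S\in\DD^S$ equals $T_k^{S_1}\otimes 1 + 1\otimes T_k^{S_2}$, and multiplication by it is $T_k^{S_1}\otimes\textup{id} + \textup{id}\otimes T_k^{S_2}$. Finally $S_k = (k+1)!\sum_{p_i^L=0} R_{-1}[\gamma_i^L]\ch_{k+1}(\gamma_i^R)$: again the Künneth sum of $\Delta_S$ restricted to terms with $p_i^L = 0$ splits into the analogous sums for $S_1$ and $S_2$, and each summand $R_{-1}[\gamma_i^L]\circ(\text{mult.\ by }\ch_{k+1}(\gamma_i^R))$ with $\gamma_i^L,\gamma_i^R$ supported on $S_a$ acts only on the $\DD^{S_a}$ factor (the twisted derivation $R_{-1}[\alpha]$ for $\alpha\in H^*(S_a)$ annihilates generators $\ch_i(\gamma)$ with $\gamma\in H^*(S_b)$, $b\neq a$, since $\alpha\gamma = 0$). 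Hence $S_k^S = S_k^{S_1}\otimes\textup{id} + \textup{id}\otimes S_k^{S_2}$.

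Adding the three identities gives $\L_k^S = R_k^S + T_k^S + S_k^S = \left(R_k^{S_1}+T_k^{S_1}+S_k^{S_1}\right)\otimes\textup{id} + \textup{id}\otimes\left(R_k^{S_2}+T_k^{S_2}+S_k^{S_2}\right) = \L_k^{S_1}\otimes\textup{id}_{\DD^{S_2}} + \textup{id}_{\DD^{S_1}}\otimes\L_k^{S_2}$, as claimed. I do not expect a genuine obstacle here; the only point requiring a little care is bookkeeping: making sure that (a) the Hodge-type conventions used in the exponents and factorials are applied consistently after the splitting, (b) the abbreviation conventions for the bilinear expressions $\ch_a\ch_b(1)$ etc.\ are unpacked before splitting so that no cross Künneth terms are silently introduced, and (c) one checks that $\ch_k(\gamma_1)$ for $\gamma_1\in H^*(S_1)$ really is sent to $\ch_k(\gamma_1)\otimes 1$ under the algebra isomorphism $\DD^S\cong\DD^{S_1}\otimes\DD^{S_2}$, which is immediate from the linearity relations and the direct sum decomposition of $H^*(S)$. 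The mildly delicate part is the vanishing of cross terms (products of an $S_1$-class with an $S_2$-class in $H^*(S)$, and the absence of $H^*(S_1)\otimes H^*(S_2)$ components in $\Delta_S$), but this is a standard consequence of $S$ being a disjoint union.
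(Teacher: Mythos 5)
Your proposal is correct and follows essentially the same route as the paper: handle $R_k$, $T_k$, $S_k$ separately, using that $R_k$ is a derivation, that the diagonal of $S$ (and the Chern classes) split into the $S_1$- and $S_2$-pieces with no cross terms, and that $R_{-1}[\gamma_i^L]$ for $\gamma_i^L$ supported on $S_1$ does not interact with descendents from $S_2$. The only difference is that you spell out the bookkeeping in more detail than the paper does.
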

\begin{proof}
This property holds for the 3 operators $R_k, T_k, S_k$ defining $\L_k$. For $R_k$ it holds simply because $R_k$ is a derivation. For both $T_k$ and $S_k$ it holds since the diagonal class of $S$ is the sum of the diagonal classes of $S_1$ and $S_2$ via the inclusions $H^\ast(S_i\times S_i)\hookrightarrow H^\ast(S\times S)$. For the $S_k$ operator note also that for $\gamma_i^L\in H^\ast(S_1)\hookrightarrow H^\ast(S)$ we have
\[R_{-1}^S[\gamma_i^L](D_1\otimes D_2)=(R_{-1}^{S_1}[\gamma_i^L]D_1)\otimes D_2\]
since $R_{-1}^S[\gamma_i^L]$ is a derivation and it doesn't interact with descendents coming from $S_2$. 
\end{proof}

We now describe the evaluation map $\langle \cdot \rangle^S: \DD^S\to \QQ$ in terms of the evaluation maps of $S_1$ and $S_2$. 

\begin{proposition}
Let $S=S_1\sqcup S_2$, let $D_1\otimes D_2\in \DD^S=\DD^{S^1}\otimes \DD^{S^2}$ and let $n\geq 0$. Then 
\[\langle D_1\otimes D_2\rangle^S_n=\sum_{n_1+n_2=n}\langle D_1\rangle^{S_1}_{n_1}\langle D_2\rangle^{S_2}_{n_2}\]
where the sum runs over $n_1, n_2\geq 0$ summing to $n$. Hence
\begin{equation}\label{virasorodisconnected}\langle \L_k^S(D_1\otimes D_2)\rangle^S_n=\sum_{n_1+n_2=n}\left(\langle D_1\rangle^{S_1}_{n_1}\langle \L_k^{S_2}(D_2)\rangle^{S_2}_{n_2}+\langle \L_k^{S_1}(D_1)\rangle^{S_1}_{n_1}\langle D_2\rangle^{S_2}_{n_2}\right)
\end{equation}
\end{proposition}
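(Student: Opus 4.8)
The statement to prove is the formula $\langle D_1\otimes D_2\rangle^S_n=\sum_{n_1+n_2=n}\langle D_1\rangle^{S_1}_{n_1}\langle D_2\rangle^{S_2}_{n_2}$, from which the displayed consequence \eqref{virasorodisconnected} follows by bilinearity together with the preceding lemma expressing $\L_k^S$ as $\textup{id}\otimes \L_k^{S_2}+\L_k^{S_1}\otimes\textup{id}$.

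First I would record the geometric decomposition of the Hilbert scheme under a disjoint union: a length-$n$ subscheme of $S=S_1\sqcup S_2$ is the same as a pair consisting of a length-$n_1$ subscheme of $S_1$ and a length-$n_2$ subscheme of $S_2$ with $n_1+n_2=n$, so
\[
S^{[n]}\;\cong\;\coprod_{n_1+n_2=n} S_1^{[n_1]}\times S_2^{[n_2]},
\]
and under this identification the universal subscheme $\Sigma_n\subseteq S^{[n]}\times S$ restricts on each component $S_1^{[n_1]}\times S_2^{[n_2]}$ to $(\Sigma_{n_1}\times S_2^{[n_2]})\sqcup (S_1^{[n_1]}\times \Sigma_{n_2})$, sitting inside $(S_1^{[n_1]}\times S_2^{[n_2]})\times(S_1\sqcup S_2)$ in the obvious way. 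Next I would feed this into the definition of the geometric realization $\ch_k(\gamma)$: writing $\gamma=\gamma_1+\gamma_2$ with $\gamma_i\in H^\ast(S_i)$, and using that $\ch_k(\O_{\Sigma_n}-\O)$ splits as a sum of the analogous classes on the two pieces (which live over disjoint open-and-closed subsets), one checks that the pushforward $(\pi_2)_\ast$ along $S^{[n]}\times S\to S^{[n]}$ produces exactly $\ch_k^{S_1}(\gamma_1)\otimes 1 + 1\otimes \ch_k^{S_2}(\gamma_2)$ on each component — i.e.\ the geometric realization map is compatible with the algebra isomorphism $\DD^S=\DD^{S_1}\otimes\DD^{S_2}$ once we realize $\DD^{S_i}$ inside $H^\ast\big(\coprod_{n_i} S_i^{[n_i]}\big)$. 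This is the heart of the argument and really just amounts to unwinding the definitions and the fact that Chern characters and pushforwards respect direct-sum decompositions over disjoint unions.

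Having identified the classes, the integral formula is then just Fubini: for $D_i\in\DD^{S_i}$ the geometric realization of $D_1\otimes D_2$ on the component $S_1^{[n_1]}\times S_2^{[n_2]}$ is the external product of the realizations of $D_1$ on $S_1^{[n_1]}$ and $D_2$ on $S_2^{[n_2]}$ (here I use that the two sets of descendents come from factors that are pulled back from the two different factors of the product, so they multiply externally and no cross terms appear), hence
\[
\int_{S_1^{[n_1]}\times S_2^{[n_2]}} D_1\boxtimes D_2=\Big(\int_{S_1^{[n_1]}}D_1\Big)\Big(\int_{S_2^{[n_2]}}D_2\Big),
\]
and summing over the components of $S^{[n]}$ gives the claimed formula. (I should also note the convention that $\int_{S_i^{[n_i]}}$ of something that is not a top-degree class, or of something not lying in $\DD^{S_i}$ with the right total degree, is zero, so that only finitely many terms contribute and the sum makes sense.) Finally, \eqref{virasorodisconnected} is obtained by applying this to $\L_k^S(D_1\otimes D_2)=(D_1\otimes \L_k^{S_2}D_2)+(\L_k^{S_1}D_1\otimes D_2)$ and using linearity of the integral.

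The main obstacle is essentially bookkeeping rather than conceptual: one must be careful that the descendent algebra isomorphism $\DD^S\cong\DD^{S_1}\otimes\DD^{S_2}$ is compatible with the geometric realizations across \emph{all} components $S_1^{[n_1]}\times S_2^{[n_2]}$ simultaneously — in other words, that the ``geometric realization'' is best viewed as a map into $\prod_n H^\ast(S^{[n]})$ and that the external-product structure is uniform in $(n_1,n_2)$. Once that is set up cleanly, both the class identity and the integral identity are immediate. There are no sign subtleties here because $\DD^S$, $\DD^{S_i}$ are honestly commutative algebras (the surface is simply connected, $H^1(S)=0$), so no Koszul signs intervene in the tensor decomposition.
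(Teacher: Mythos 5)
Your proposal is correct and follows essentially the same route as the paper: decompose $S^{[n]}$ into the components $S_1^{[n_1]}\times S_2^{[n_2]}$, decompose the universal subscheme accordingly so that the realized descendents split as external products, and then integrate componentwise, with \eqref{virasorodisconnected} following from the lemma on $\L_k^S$. No gaps.
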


\begin{proof}
We begin with the decomposition
\[S^{[n]}=\bigsqcup_{n_1+n_2=n}S_1^{[n_1]}\times S_2^{[n_2]}.\]
The universal subscheme $\Sigma_n^S$ (we use the superscript $S$ to make the surface we're referring to explicit) admits a decomposition in connected components
\[\Sigma_n^S=\bigsqcup_{n_1+n_2=n}\Sigma_{n_1}^{S_1}\times S_2^{[n_2]}\sqcup \bigsqcup_{n_1+n_2=n}S_1^{[n_1]}\times \Sigma_{n_2}^{S_2}.\]
Here $\Sigma_{n_1}^{S_1}\times S_2^{[n_2]}$ is contained in the connected component $S_1\times S_1^{[n_1]}\times S_2^{[n_2]}$ of $S\times S^{[n]}$. Thus
\[\ch_k\left(\O_{\Sigma_n^S}\right)=\sum_{n_1+n_2=n}\left(1_{S_1^{[n_1]}}\otimes \ch_k\left(\O_{\Sigma_{n_2}^{S_2}}\right)+\ch_k\left(\O_{\Sigma_{n_1}^{S_1}}\right)\otimes 1_{S_2^{[n_2]}}\right).\]

It follows formally that the geometric realization of $D_1\otimes D_2$ in 
\[H^\ast(S^{[n]})=\bigoplus_{n_1+n_2=n}H^\ast(S_1^{[n_1]}\times S_2^{[n_2]})\]
is given in each component of the direct sums by the external product of the geometric realizations of $D_1$ and $D_2$ in $H^\ast(S_1^{[n_1]})$ and $H^\ast(S_2^{[n_2]})$, respectively. The result then follows.
\end{proof}

From equation \eqref{virasorodisconnected} the following holds:

\begin{corollary}
\label{disconnected}
Let $S=S_1\sqcup S_2$ be a disconnected surface. If theorem \ref{main} holds for $S_1$ and $S_2$ then it also holds for $S$.  
\end{corollary}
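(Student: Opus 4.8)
The plan is to deduce this directly from the identity \eqref{virasorodisconnected} established in the proposition just above, so the argument is essentially formal. First I would reduce to pure tensors: under the identification $\DD^S=\DD^{S_1}\otimes\DD^{S_2}$, every element of $\DD^S$ is a finite $\QQ$-linear combination of elements of the form $D_1\otimes D_2$ with $D_i\in\DD^{S_i}$, and since both the operator $\L_k^S$ and the evaluation map $\langle\,\cdot\,\rangle^S_n$ are $\QQ$-linear, it suffices to prove $\langle \L_k^S(D_1\otimes D_2)\rangle^S_n=0$ for such pure tensors. (Here one uses implicitly that each $\L_k^{S_i}$ is an endomorphism of $\DD^{S_i}$, which is clear from the definitions of $R_k$, $T_k$, $S_k$.)

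For a pure tensor $D_1\otimes D_2$, equation \eqref{virasorodisconnected} writes $\langle \L_k^S(D_1\otimes D_2)\rangle^S_n$ as a finite sum over splittings $n_1+n_2=n$ of terms, each of which carries a factor $\langle \L_k^{S_1}(D_1)\rangle^{S_1}_{n_1}$ or $\langle \L_k^{S_2}(D_2)\rangle^{S_2}_{n_2}$. By hypothesis Theorem \ref{main} holds for $S_1$ and for $S_2$, so $\langle \L_k^{S_i}(D_i)\rangle^{S_i}_{n_i}=0$ for all $n_i\geq 0$; hence every summand on the right-hand side of \eqref{virasorodisconnected} vanishes and the corollary follows. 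There is no genuine obstacle here: all the substance sits in the previously established proposition that produces \eqref{virasorodisconnected}. The only point worth flagging is that one really does need Theorem \ref{main} for $S_1$ and $S_2$ in its full strength — vanishing for all insertions and all point counts — since the sum in \eqref{virasorodisconnected} ranges over every decomposition $n=n_1+n_2$, not just a single one.
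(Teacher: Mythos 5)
Your argument is correct and is exactly the paper's (implicit) argument: the corollary is read off from equation \eqref{virasorodisconnected}, since every summand carries a factor $\langle \L_k^{S_1}(D_1)\rangle^{S_1}_{n_1}$ or $\langle \L_k^{S_2}(D_2)\rangle^{S_2}_{n_2}$ that vanishes by hypothesis. The reduction to pure tensors by linearity is a harmless extra detail the paper leaves unstated.
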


Combining corollary \ref{disconnected} with the result for toric surfaces, theorem \ref{maintoric}, it follows that the Virasoro constraints also hold for disconnected toric surfaces.

\begin{corollary}
\label{disconnectedtoric}
Theorem \ref{main} holds for disconnected toric surfaces.
\end{corollary}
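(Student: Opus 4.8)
The statement (Corollary \ref{disconnectedtoric}) follows immediately by combining two facts already established in the excerpt: Theorem \ref{maintoric}, which asserts Theorem \ref{main} for connected toric surfaces, and Corollary \ref{disconnected}, which says that if Theorem \ref{main} holds for $S_1$ and $S_2$ then it holds for $S=S_1\sqcup S_2$. The plan is therefore a short induction on the number of connected components.

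First I would observe that a disconnected toric surface $S$ decomposes as a finite disjoint union $S=S_1\sqcup\cdots\sqcup S_r$ of connected toric surfaces; each $S_i$ carries a torus action, and since $H^1$ of a smooth projective toric surface vanishes, each $S_i$ (and hence $S$) satisfies the hypothesis $H^1=0$ needed to even state Theorem \ref{main}. By Theorem \ref{maintoric}, Theorem \ref{main} holds for each connected piece $S_i$.

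Next I would run the induction. The base case $r=1$ is exactly Theorem \ref{maintoric}. For the inductive step, write $S=S'\sqcup S_r$ with $S'=S_1\sqcup\cdots\sqcup S_{r-1}$; by the inductive hypothesis Theorem \ref{main} holds for $S'$, and by Theorem \ref{maintoric} it holds for $S_r$, so Corollary \ref{disconnected} applied to the decomposition $S=S'\sqcup S_r$ yields Theorem \ref{main} for $S$. This closes the induction and proves the corollary.

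There is essentially no obstacle here: the corollary is a formal consequence of the two preceding results, and the only thing worth spelling out is that the disjoint-union decomposition used in Corollary \ref{disconnected} (hence the identity \eqref{virasorodisconnected}) is associative in the obvious way, so iterating the two-component case over the finitely many components of $S$ is legitimate.
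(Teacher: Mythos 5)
Your proposal is correct and matches the paper's argument: the corollary is obtained exactly by combining Theorem \ref{maintoric} for each connected toric component with Corollary \ref{disconnected}, iterated over the finitely many components. The brief remark about associativity of the disjoint-union decomposition is a fine (if unnecessary) precaution; nothing further is needed.
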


\subsection{Universal formulas for integrals of descendents on the Hilbert scheme}
\label{subsec: universal}

\noindent Integrals of descendents on the Hilbert scheme of points on a (possibly disconected) surface admit universal expressions by a well known argument due to Ellingsrud, Göttsche and Lehn, \cite{egl}. The original result of \cite{egl} is for descendents of $K$-theory classes or, equivalently, descendents of cohomology classes in the image of the map $K(S)\to H^\ast(S)$ mapping $\alpha\in K(X)$ to $\ch(\alpha)\td(X)$. The recursive argument of \cite{egl} was adapted to our setting in \cite{lqw}. These universal formulas are polynomials in integrals involving insertions and the Chern classes $c_1=c_1(TS)$ and $c_2=c_2(TS)$.
\begin{definition}
Let $(\gamma_1, \ldots, \gamma_m)$ be a $m$-tuple of classes in $H^\ast(S)$. The set of integrals of $(\gamma_1, \ldots, \gamma_m)$ is the assignment
\[(I, \varepsilon)\mapsto P_I^\varepsilon=\int_{S}c_1^{\varepsilon^1}c_2^{\varepsilon^2}\prod_{j\in I}\gamma_j\]
\footnote{The $\gamma_j$ in the product $\prod_{j\in I}\gamma_j$ are ordered according to the natural ordering of $I\subseteq \{1, \ldots, m\}$. We allow the set $I$ to be empty.}where $I\subseteq\{1, \ldots m\}$ and $\varepsilon=(\varepsilon^1, \varepsilon^2)\in \{(0,0), (1, 0), (2,0), (0, 1)\}$. If $D=\prod_{j=1}^m\ch_{k_j}(\gamma_j)\in \DD^S$ we say that the set of integrals of $D$ is the set of integrals of $(\gamma_1, \ldots, \gamma_m)$.
\end{definition}

In particular, taking $I=\emptyset$, the numbers $\int_S c_1^2$ and $\int_S c_2$, which depend only on $S$, are part of the set of integrals of any $D$.

\begin{theorem}[(\cite{egl}, \cite{lqw})]
\label{universal}
Given fixed $k_1, \ldots, k_m$ and $n$, let $D=\prod_{j=1}^m\ch_{k_j}(\gamma_j)$. Then the evaluation $\langle D\rangle_n^S$ is a polynomial in the integrals of $D$. More precisely,
\[\langle D\rangle^S_n=\sum_{\varepsilon_1, \ldots, \varepsilon_k, I_1, \ldots, I_k}a_{I_1, \ldots I_k}^{\varepsilon_1, \ldots, \varepsilon_k}\prod_{j=1}^k P_{I_j}^{\varepsilon_j}\]
where the sum runs over every partition (possibly with empty parts)
\[[m]=\bigsqcup_{j=1}^k I_j\]
and every possibility of $\varepsilon_j$. The coefficients $a_{I_1, \ldots I_k}^{\varepsilon_1, \ldots, \varepsilon_k}\in \QQ$ depend only on $k_1, \ldots, k_m, n$. 
\end{theorem}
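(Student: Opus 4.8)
The plan is to run the Ellingsrud--G\"ottsche--Lehn bootstrap, keeping careful track of which auxiliary integrals appear. First I would reduce descendents of arbitrary cohomology classes to descendents of $K$-theory classes. Writing $\ch_k(\O_{\Sigma_n}-\O_{S^{[n]}\times S})$ and expanding, the class $\ch_k(\gamma)$ is, up to the universal $\td(S)$-factors, obtained by capping the universal sheaf against $\pi_1^*\gamma$; since $\{1,c_1,c_1^2,c_2,\text{pt}\}$ (together with a basis of $H^1\oplus H^3$, which is absent here because $H^1(S)=0$) spans $H^*(S)$ over $\QQ$ after allowing the classes $c_1,c_2$, every $\gamma$ is a $\QQ$-combination of classes of the form $c_1^{a}c_2^{b}\cdot(\text{basis element})$, and the monomials in $c_1,c_2$ are exactly what gets recorded in the decorations $\varepsilon=(\varepsilon^1,\varepsilon^2)$. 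By linearity of $\ch_k(\cdot)$ it therefore suffices to prove the statement when each $\gamma_j$ lies in the image of $K(S)\to H^*(S)$, which is the setting of \cite{egl, lqw}.

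Next I would invoke the geometric input of \cite{egl}: the incidence variety $S^{[n,n+1]}\subseteq S^{[n]}\times S^{[n+1]}\times S$ and the exact triangle relating $\O_{\Sigma_{n+1}}$, $\O_{\Sigma_n}$ and a rank-one sheaf supported on the residual point. Pulling this triangle back and taking Chern characters expresses a descendent integral on $S^{[n+1]}$ as a sum of descendent integrals on $S^{[n]}$ against $S^{[n,n+1]}$, and then \cite{egl} (adapted in \cite{lqw}) shows how the fibre structure of $S^{[n,n+1]}\to S^{[n]}\times S$ lets one push everything down to $S^{[n]}\times S$ and then to $S^{[n]}$. Iterating this recursion in $n$ (with the base case $S^{[0]}=\mathrm{pt}$, $S^{[1]}=S$) one obtains that $\langle D\rangle_n^S$ is a universal polynomial, with coefficients depending only on $k_1,\dots,k_m$ and $n$, in integrals over $S$ of monomials $c_1^{\varepsilon^1}c_2^{\varepsilon^2}\prod_{j\in I}\gamma_j$ where $I$ ranges over the parts of a partition of $\{1,\dots,m\}$. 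The reason a partition appears rather than a single subset is that in the recursion different descendents get ``distributed'' onto different points being added, and at each stage a product of already-accumulated insertions is integrated over a copy of $S$; tensoring with the disconnected-surface formula \eqref{virasorodisconnected} is exactly what forces the partition bookkeeping. Only the decorations $(0,0),(1,0),(2,0),(0,1)$ can occur because $\dim S=2$ and $c_1^3=c_1c_2=0$ in $H^*(S)$ for dimension reasons, so any higher power of the $c_i$'s integrates to zero.

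The one point that needs genuine care — and which I expect to be the main obstacle — is the bookkeeping that converts ``the recursion terminates in integrals over products of copies of $S$'' into the precise statement ``a polynomial in the $P_I^\varepsilon$ indexed by partitions of $[m]$''. One must check that no integral outside this list survives: a priori the recursion could produce integrals of $c_1^a c_2^b \prod_{j\in I}\gamma_j$ with $(a,b)$ outside the four allowed pairs, and one must use $c_i$-degree vanishing on the surface to kill these; one must also check that the same insertion $\gamma_j$ never appears in two different parts $I_{j_1}, I_{j_2}$, which follows from the fact that each $\ch_{k_j}(\gamma_j)$ is linear in its universal-sheaf factor and hence gets assigned to a single added point in each term of the recursion. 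All of this is routine given \cite{egl,lqw}, so I would state it as a direct citation of their method, spelling out only the reduction to $K$-theory classes and the role of the disconnected-surface decomposition, and referring to \cite{egl,lqw} for the recursion itself.
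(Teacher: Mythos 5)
The paper does not actually prove this statement: it is quoted from \cite{egl} and \cite{lqw}, with the only added remark being that multilinearity of $\langle D\rangle^S_n$ in the classes $\gamma_j$ is what forces the precise shape of the formula (each $\gamma_j$ appears in exactly one integral, so the $I_j$ form a partition and no higher powers of the $\gamma_j$ occur). Your proposal is broadly in the same spirit --- defer the recursion itself to \cite{egl,lqw} --- but the preliminary reduction you insert is wrong. The claim that $\{1, c_1, c_1^2, c_2, \mathrm{pt}\}$ spans $H^\ast(S;\QQ)$ is false for essentially every surface: $H^2(S;\QQ)$ has rank $b_2(S)$, which is usually much larger than $1$ (and $c_1$ may even vanish, e.g.\ for a K3 surface), so a general $\gamma\in H^2(S)$ is not a combination of monomials in $c_1, c_2$. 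Worse, the classes of Hodge type $(2,0)\oplus(0,2)$ --- which are exactly the ones the paper needs later in Section \ref{subsec: mainnonhodge} --- are not in the image of the (algebraic) $K$-theory Chern character at all, so the statement genuinely cannot be reduced to the $K$-theoretic setting of \cite{egl} in the way you describe; this is precisely why the paper cites \cite{lqw}, where the EGL recursion is adapted to arbitrary cohomological insertions. The correct route is to run the recursion directly with arbitrary $\gamma_j$ (or simply to cite \cite{lqw} for that), not to claim the $\gamma_j$ can be rewritten in terms of $c_1, c_2$.

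Two smaller points. The partition bookkeeping is not produced by ``tensoring with the disconnected-surface formula \eqref{virasorodisconnected}''; that formula concerns $S=S_1\sqcup S_2$ and plays no role here. What gives the partition structure is simply that $\langle D\rangle^S_n$ is linear in each $\gamma_j$ separately, so in the universal polynomial each $\gamma_j$ occurs exactly once, inside exactly one integral over $S$ --- this is the paper's remark following the statement. Your observation that only the decorations $(0,0),(1,0),(2,0),(0,1)$ survive for degree reasons is correct and consistent with the definition of the set of integrals.
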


Our formulation uses the fact that $\langle D\rangle_n^S$ depends linearly on the classes $\gamma_j$. In particular this explains why there are no integrals involving higher powers of the classes $\gamma_j$ and why the sets $I_j$ form a partition.

From this we can get similar universal formulas for $\langle \L_k D\rangle^S_n$. To prove those, we'll use the following identities which are easy consequences of Hirzebruch-Riemann-Roch.

\begin{lemma}\label{lem: identitieshrr}For any surface $S$ we have the identities
\[\sum_{p^L_j=0}\gamma^L_j\gamma^R_j=\frac{1}{12}\left(c_1^2+c_2\right)=\sum_{p^L_j=2}\gamma^L_j\gamma^R_j \textup{ and }\sum_{p^L_j=1}\gamma^L_j\gamma^R_j=\frac{1}{6}\left(5c_2-c_1^2\right)\]
where the sums run over the terms in the Kunneth decomposition $\sum_j\gamma_j^L\otimes \gamma_j^R$ of the diagonal with $p_j^L=0,1,2$.
\end{lemma}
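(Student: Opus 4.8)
The statement to prove is Lemma~\ref{lem: identitieshrr}, which asserts three identities relating the Künneth components of the diagonal class $\Delta \in H^*(S \times S)$, split according to the first Hodge index, to the Chern classes $c_1 = c_1(TS)$ and $c_2 = c_2(TS)$ of the surface $S$.

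\textbf{The approach.} The plan is to use Hirzebruch--Riemann--Roch applied to the diagonal, or equivalently to recognize that the "diagonal pairing split by Hodge degree" computes the Todd genus. Concretely, write the Künneth decomposition $\Delta = \sum_j \gamma_j^L \otimes \gamma_j^R$, arranged so that each $\gamma_j^L$ is homogeneous of Hodge type $(p_j^L, q_j^L)$. The three sums $\sum_{p_j^L = p} \gamma_j^L \gamma_j^R \in H^4(S) \cong \QQ$ for $p = 0, 1, 2$ are what we must identify. The key classical fact is that $\sum_j \gamma_j^L \gamma_j^R = \Delta^* \Delta = c_2(TS) = c_2 = e(S)$, the Euler class, so the three sums add up to $c_2$. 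To split this according to $p$, I would recall the refinement of the Gauss--Bonnet/Chern--Gauss--Bonnet statement: for each $(p,q)$, the component of the diagonal pairing contributed by $H^{p,q}$ classes is (up to sign conventions) $(-1)^{p+q} h^{p,q}$ when integrated, but more precisely, as a cohomology class in $H^4(S)$, one has $\sum_{\gamma_j^L \in H^{p,*}} \gamma_j^L \gamma_j^R = (-1)^p \, \mathrm{ch}_p$-type contribution; the cleanest route is via the Hodge-theoretic splitting of the diagonal and the fact that $\sum_p (-x)^p \sum_{p_j^L = p} \gamma_j^L \gamma_j^R$ is, by Hirzebruch--Riemann--Roch, the expression $\int$ of a Todd-class-twisted Chern character — specializing $x$ extracts the three sums.

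\textbf{Key steps in order.} First, I would establish the total identity $\sum_j \gamma_j^L \gamma_j^R = c_2$ (Euler class), which is standard. Second, I would use the generating-function/HRR identity: for the structure sheaf twist, $\chi(S, \Omega_S^p \otimes -)$-style computations give that the "$p$-weighted diagonal" $D_p := \sum_{p_j^L = p} \gamma_j^L \gamma_j^R$ satisfies $D_0 + D_1 + D_2 = c_2$ together with the signed relation $D_0 - D_1 + D_2 = \mathrm{td}_2$-related quantity. Specifically, HRR for a surface gives $\chi(\O_S) = \int_S \mathrm{td}_2(S) = \int_S \frac{c_1^2 + c_2}{12}$, and the class-level refinement is precisely $D_0 = \mathrm{td}_2(S) = \frac{c_1^2+c_2}{12}$ (this is the $p=0$ row, coming from $H^0$, i.e. $\chi(\O_S)$). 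By Serre duality / Hodge symmetry $D_2 = D_0$, giving the first pair of equalities. Third, subtract: $D_1 = c_2 - 2 D_0 = c_2 - \frac{c_1^2 + c_2}{6} = \frac{6c_2 - c_1^2 - c_2}{6} = \frac{5c_2 - c_1^2}{6}$, which is exactly the claimed formula for the $p_j^L = 1$ sum. Finally I would note all three are elements of $H^4(S) \cong \QQ$ so these equalities of rational multiples of the point class make sense, and that the argument is insensitive to whether $S$ is connected (the disconnected case follows since everything is additive over components, consistent with the earlier discussion).

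\textbf{Main obstacle.} The routine parts are the arithmetic ($c_2 - \frac{c_1^2+c_2}{6} = \frac{5c_2 - c_1^2}{6}$) and invoking $\Delta^*\Delta = c_2$. The genuine content — and the step I expect to require the most care — is justifying the class-level (not just numerical) identity $D_0 = \frac{c_1^2 + c_2}{12}$, i.e. that the part of the diagonal decomposition with $\gamma_j^L$ of Hodge type $(0,q)$ contributes exactly the degree-$2$ Todd class. One clean way to see this: pick a basis of $H^*(S)$ adapted to the Hodge decomposition with a dual basis under Poincaré duality; then $D_p$ is the sum over basis elements of type $(p, \cdot)$ of (element)$\cup$(its dual), and using that Poincaré duality pairs $H^{p,q}$ with $H^{2-p, 2-q}$, one identifies $D_0$ with the image of $1 \in H^0$ times a canonical bundle twist, ultimately matching $\mathrm{td}_2$ via the self-intersection formula applied fibrewise in the Hodge filtration — equivalently, applying HRR to the two projections $S \times S \to S$ of the structure sheaf of the diagonal and reading off the $H^{0,*} \otimes H^{3,*}$-analogue. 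Since the paper already invokes this lemma as "an easy consequence of Hirzebruch--Riemann--Roch," I would present the HRR computation compactly: apply HRR to compute $(\pi_2)_* \mathrm{ch}(\O_\Delta) \cdot \mathrm{td}(S)$ (a pushforward from $S \times S$), observe its Hodge-type-$(0,*)$-in-the-first-factor component is $(\pi_2)_*$ of the relevant piece which equals $\mathrm{td}_2(S)$, and invoke Hodge symmetry for the $p=2$ case.
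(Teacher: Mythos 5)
Your proposal is correct and follows essentially the same route as the paper: expand the diagonal in a basis adapted to the Hodge grading so that the $p$-graded pieces integrate to signed Hodge number sums, and identify these via Hirzebruch--Riemann--Roch (Noether's formula), with class-level statements following because $H^4(S)\cong\QQ$ componentwise. The only (harmless) differences are that you obtain the $p=2$ case from Serre duality and the $p=1$ case by subtracting from $\Delta^\ast\Delta_\ast 1=c_2$, whereas the paper computes $(-1)^k\chi(\Omega^k_S)=\int_S\ch(\Omega^k_S)\td(S)$ directly for each $k$; also the phrase about ``reading off the $H^{0,\ast}\otimes H^{3,\ast}$-analogue'' is garbled for a surface, but the correct justification (the signed count $(-1)^{p+q}h^{p,q}$ via dual bases) already appears earlier in your argument.
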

\begin{proof}
Writing the class of the diagonal using some dual basis of $H^\ast(S)$ respecting the Hodge grading one can show that
\[\int_S\sum_{p_j^L=k}\gamma_j^L\gamma_j^R=(-1)^k\left(h^{k,0}-h^{k,1}+h^{k,2}\right).\]
By Hirzebruch-Riemann-Roch the latter may be expressed using the Chern classes 
\[h^{k,0}-h^{k,1}+h^{k,2}=\chi(\Omega_S^k)=\int_{S} \ch(\Omega_S^k)\td(S)=\begin{cases}
\int_S \td(S)=\int_{S}\frac{1}{12}(c_1^2+c_2)&\textup{ if }k=0,2\\
\int_S\frac{1}{6}\left(5c_2-c_1^2\right)&\textup{ if }k=1.
\end{cases}\]
\end{proof}

Note that summing over $k$ produces the well-known identity $\Delta^\ast \Delta_\ast 1=c_2(S)=\chi^{\textup{top}}(S)$. Ultimately, the last lemma (more precisely: the fact we can write such sums in terms of Chern classes) is the crucial property of the Hodge degrees in our proof. The fact that we can't express sums over terms in the diagonal with fixed cohomological degree explains why our proof wouldn't work if we had defined the Virasoro operators using the cohomological degree.

\begin{lemma}
\label{universalvirasorohodge}
Let 
\[D=\prod_{i=1}^m\ch_{k_i}(\gamma_i)\in \DD^S\]
where $\gamma_i\in H^{p_i, q_i}(S)$. Assume that $k,m, k_i, p_i$ are all fixed. Then
\[\langle \L_k D\rangle^S_n\]
is a polynomial in the set of integrals of $(\gamma_1, \ldots, \gamma_s)$.
\end{lemma}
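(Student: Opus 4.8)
The plan is to expand $\L_k D = R_k D + T_k D + S_k D$ and check that each of the three pieces, after collapsing $\ch_0,\ch_1$ and after applying the evaluation $\langle\cdot\rangle^S_n$, becomes a polynomial in the set of integrals of $(\gamma_1,\dots,\gamma_m)$. Since we are told by Theorem \ref{universal} that $\langle D'\rangle^S_n$ is a polynomial in the set of integrals of $D'$ whenever $D'$ is a monomial $\prod_j \ch_{k_j}(\gamma'_j)$ with the $k_j$ fixed, and since $\langle\cdot\rangle^S_n$ is linear, it suffices to show that each of $R_k D$, $T_k D$, $S_k D$ is a \emph{finite} linear combination (with coefficients depending only on $k,m,n,k_i,p_i,q_i$) of monomials $\prod_j \ch_{k'_j}(\gamma'_j)$ whose set of integrals is contained in --- or can be reduced to --- the set of integrals of $(\gamma_1,\dots,\gamma_m)$.

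First I would handle $R_k D$: since $R_k$ is a derivation and $R_k(\ch_i(\gamma)) = \big(\prod_{j=0}^k(i+p-2+j)\big)\ch_{i+k}(\gamma)$, the element $R_k D$ is a sum of $m$ monomials, each obtained from $D$ by raising one index $k_i$ to $k_i+k$ and multiplying by a scalar depending only on $k$ and $p_i$. The tuple of cohomology classes is unchanged, so the set of integrals is literally the same; Theorem \ref{universal} applies directly. Next, $S_k D = (k+1)!\sum_{p_i^L=0} R_{-1}[\gamma_i^L]\big(\ch_{k+1}(\gamma_i^R)D\big)$: multiplying by $\ch_{k+1}(\gamma_i^R)$ appends one descendent whose class $\gamma_i^R$ ranges over the $H^{0,q}$-part of the Künneth decomposition of $\Delta$, and then $R_{-1}[\gamma_i^L]$ is a derivation that replaces some $\ch_j(\delta)$ by $\ch_{j-1}(\gamma_i^L\delta)$. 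The new classes appearing are $\gamma_i^R$ and products $\gamma_i^L\gamma_j$; the key point is that by Lemma \ref{lem: identitieshrr} the relevant sums $\sum_{p_i^L=0}\gamma_i^L\otimes\gamma_i^R$ collapse, when one of the legs is multiplied into another class and integrated, into expressions involving only $c_1^2$ and $c_2$ times the original $\gamma_j$'s --- precisely the integrals $P_I^\varepsilon$. Concretely: after applying $\langle\cdot\rangle^S_n$ and expanding via Theorem \ref{universal}, every monomial in the resulting polynomial is a product of $P_{I_j}^{\varepsilon_j}$'s for the classes $\{\gamma_i^R\}\cup\{\gamma_i^L\gamma_j\}\cup\{\gamma_\ell\}$; collecting the sum over $i$ (i.e. over the Künneth terms with $p_i^L=0$) and using $\sum_{p_i^L=0}\int_S \gamma_i^L\gamma_i^R(\cdots) = \int_S \tfrac{1}{12}(c_1^2+c_2)(\cdots)$ and $\sum_{p_i^L=0}\int_S(\gamma_i^L\gamma_j)\,\gamma_i^R(\cdots) = \int_S\tfrac1{12}(c_1^2+c_2)\gamma_j(\cdots)$, every such sum turns into a polynomial in the $P_I^\varepsilon$ of $(\gamma_1,\dots,\gamma_m)$ alone.

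The term $T_k D$ is handled the same way, and this is where I expect the bookkeeping --- though not the conceptual difficulty --- to concentrate. $T_k$ is multiplication by the fixed element $\sum_{a+b=k+2}(-1)^{p^Lp^R}(a+p^L-2)!(b+p^R-2)!\,\ch_a\ch_b(1) + \sum_{a+b=k}a!b!\,\ch_a\ch_b(\tfrac{c_1^2+c_2}{12})$, where the first sum is really $\sum_i(-1)^{p_i^Lp_i^R}(\cdots)\ch_a(\gamma_i^L)\ch_b(\gamma_i^R)$ over the Künneth decomposition of the diagonal $\Delta$. So $T_k D$ is a finite sum of monomials obtained from $D$ by appending two descendents; the appended classes are either $c_1^2, c_2$ (harmless: these only contribute to the $\varepsilon$-superscripts in $P_I^\varepsilon$), or a pair $\gamma_i^L,\gamma_i^R$ from the diagonal. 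For the latter, once we apply $\langle\cdot\rangle^S_n$ and Theorem \ref{universal}, each resulting monomial splits the index set $[m]\cup\{L,R\}$ into blocks; the blocks containing neither $L$ nor $R$ are ordinary $P_I^\varepsilon$ of $(\gamma_1,\dots,\gamma_m)$, and the (at most two) blocks containing $L$ or $R$ produce integrals $\int_S c_1^{*}c_2^{*}\gamma_i^L\prod_{j\in I}\gamma_j$ and $\int_S c_1^{*}c_2^{*}\gamma_i^R\prod_{j\in I'}\gamma_j$. Summing over the Künneth index $i$, Lemma \ref{lem: identitieshrr} again consolidates $\sum_i \gamma_i^L(\cdots)\otimes\gamma_i^R(\cdots)$: when $L$ and $R$ land in the same block we get $\sum_i\int_S c_1^{*}c_2^{*}\gamma_i^L\gamma_i^R\prod\gamma_j = \int_S c_1^{*}c_2^{*}\cdot\tfrac{1}{12}(c_1^2+c_2)\cdot\prod\gamma_j$ (splitting according to Hodge degree $p_i^L=0,1,2$, each giving a Chern-class expression), and when $L,R$ land in different blocks $I,I'$ we use the defining property $\sum_i\big(\int_S \gamma_i^L\,\omega\big)\big(\int_S \gamma_i^R\,\omega'\big) = \int_{S\times S}\Delta\cdot(\omega\otimes\omega') = \int_S \omega\,\omega'$, which fuses the two integrals into a single $P^\varepsilon$ of $(\gamma_1,\dots,\gamma_m)$. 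The factorial and sign prefactors $(-1)^{p_i^Lp_i^R}(a+p_i^L-2)!(b+p_i^R-2)!$ depend only on $a,b$ and the Hodge degrees, so they can be pulled out of the $i$-sum once we group the Künneth terms by $(p_i^L,q_i^L)$; since the $p_i$ (and hence, by the structure of the diagonal, the relevant Hodge data) are fixed by hypothesis, the prefactors are constants. Collecting everything, $\langle\L_k D\rangle^S_n$ is exhibited as a polynomial in the integrals of $(\gamma_1,\dots,\gamma_m)$ with coefficients depending only on $k,m,n,k_i,p_i,q_i$, as claimed.
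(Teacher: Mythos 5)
Your proposal is correct and follows essentially the same route as the paper's proof: reduce everything to the EGL universality statement (Theorem \ref{universal}) applied to the monomials obtained by appending the diagonal legs (and $c_1^2+c_2$), collapse $\sum_i \gamma_i^L\gamma_i^R$ via Lemma \ref{lem: identitieshrr} when both legs land in the same integral, fuse two integrals via $\sum_i\left(\int_S\gamma_i^L\,\omega\right)\left(\int_S\gamma_i^R\,\omega'\right)=\int_S\omega\,\omega'$ when they land in different ones, and use Hodge-degree vanishing to see that only one value of $p_i^L$ contributes, so the sign/factorial prefactors are constants that pull out of the Künneth sum. One small precision: the coefficients you obtain in fact depend only on $k,m,n,k_i,p_i$ and not on the $q_i$ (the prefactors involve only first Hodge degrees, and the surviving $p_i^L$ is determined by the fixed $p_i$'s and $\varepsilon$'s), and this independence of the $q_i$ is exactly what the later application in subsection \ref{subsec: mainnonhodge} (trading $(0,2)$ and $(2,0)$ classes for $(0,0)$ and $(2,2)$ classes) requires.
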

\begin{proof}
The claim for the term
\[\langle R_kD \rangle^S_n\] 
follows immediately from theorem \ref{universal}. It remains to study the operators $T_k, S_k$. Given a term $\gamma\otimes \gamma'$ in the Kunneth decomposition of the diagonal, we consider the formal expression provided again by theorem \ref{universal}:

\begin{align}\label{diagonaltermsuniv}\nonumber \langle\ch_a(\gamma)&\ch_b(\gamma')D \rangle^S_n=\sum_{I}\left(\int_S \gamma \gamma' \prod_{i\in I}\gamma_i\right)X_{a,b}^I\\
&+\sum_{J_1, J_2, \varepsilon_1, \varepsilon_2}\left(\int_S \gamma \prod_{j\in J_1}\gamma_jc_1^{\varepsilon_1^1}c_1^{\varepsilon_1^2}\right)\left(\int_S \gamma' \prod_{j\in J_2}\gamma_jc_1^{\varepsilon_2^1}c_1^{\varepsilon_2^2}\right)Y_{a,b}^{J_1, J_2, \varepsilon_1, \varepsilon_2}
\end{align}
where the first runs through every subset $I\subseteq [m]$ and the second sum runs through disjoint subsets $J_1, J_2\subseteq [m]$ and $\varepsilon_1, \varepsilon_2\in \{(0,0), (1,0), (2,0), (0,1)\}$. Both $X_{a,b}^{I}$ and $Y_{a,b}^{J_1, J_2, \varepsilon_1, \varepsilon_2}$ are expressions depending only on the polynomials of $D$, on $a,b$ and on the fixed variables.

We look to the contribution of the two lines of \eqref{diagonaltermsuniv} to the diagonal part of $\langle T_k D\rangle^S_n$ given by
\begin{equation}
\label{tkdiagonal}
\sum_{j}(-1)^{p^L_j p^R_j}(a+p^L_j-2)!(b+p^R_j-2)!\langle\ch_a(\gamma_j^L)\ch_b(\gamma_j^R) D\rangle^S_n.
\end{equation}
We begin with the second line. First we note that the terms in the second line of the expansion \eqref{diagonaltermsuniv} of $\langle\ch_a(\gamma_j^L)\ch_b(\gamma_j^R) D\rangle^S_n$ vanish unless we have
\[p_j^L=p_{J_1, \varepsilon_1}\equiv 2-\sum_{i\in J_1} p_i-\varepsilon_1^1-2\varepsilon_1^2 \textup{ and } p_j^R=p_{J_2, \varepsilon_2}\equiv 2-\sum_{i\in J_2} p_i-\varepsilon_2^1-2\varepsilon_2^2.\]
Hence the contribution of the second line of \eqref{diagonaltermsuniv} to \eqref{tkdiagonal} is
\[\sum_{J_1, J_2, \varepsilon_1, \varepsilon_2}(-1)^{p_{J_1, \varepsilon_1}p_{J_2, \varepsilon_2}}(a+p_{J_1, \varepsilon_1}-2)!(b+p_{J_2, \varepsilon_2}-2)!P^{\varepsilon_1+\varepsilon_2}_{J_1\sqcup J_2}Y_{a,b}^{J_1, J_2, \varepsilon_1, \varepsilon_2}\]
and $p_{J_1, \varepsilon_1},p_{J_2, \varepsilon_2}$ are determined by the collection of numbers $p_i$ as above, so the claim is proven for this term. 

For the first line of \eqref{diagonaltermsuniv} we used the identities in lemma \ref{lem: identitieshrr}. The contribution of the first line of \eqref{diagonaltermsuniv} to \eqref{tkdiagonal} is
\begin{align*}\sum_{I}&\left(\frac{1}{12}(a-2)!b!\int_S \left(\left(c_1^2+c_2\right)\prod_{i\in I}\gamma_i\right)+\frac{1}{12}a!(b-2)!\int_S \left(\left(c_1^2+c_2\right)\prod_{i\in I}\gamma_i\right)
\right.\\
&\left.-\frac{1}{6}(a-1)!(b-1)!\int_S \left(\left(-c_1^2+5c_2\right)\prod_{i\in I}\gamma_i\right)\right)X_{a,b}^I
\end{align*}
The argument for $\ch_a\ch_b\left(c_1^2+c_2\right)$ is analogous and easier.

For $S_k$ a similar analysis is once again possible. Theorem \ref{universal} provides again a universal expression for
\[\langle R_{-1}[\gamma]\ch_{k+1}(\gamma')D\rangle_n^S\]
similar to the one in the right hand side of \eqref{diagonaltermsuniv}. The rest of the argument is similar. The contribution of (the analogue of) the first line of \eqref{diagonaltermsuniv} is treated exactly in the same way using $\sum_{p^L_j=0}\gamma^L_j\gamma^R_j=\frac{1}{12}\left(c_1^2+c_2\right)$. The contribution of (the analogue of) the second line is again a sum of $P_{J_1\sqcup J_2}^{\varepsilon_1+\varepsilon_2}$, with certain coefficients, running over $J_1, J_2, \varepsilon_1, \varepsilon_2$ such that $p_{J_1, \varepsilon_1}=0, p_{J_2, \varepsilon_2}=3$. Since the numbers $p_{J_i, \varepsilon_i}$ are determined by the collection of numbers $p_i$, once again we get a similar universal expression for $\langle S_k D\rangle_n^S$ as a polynomial in the integrals of $D$.
\end{proof}

\subsection{Zariski density of data from toric varieties}
\label{subsec: zariski}

\noindent The next proposition will show that disconnected toric varieties provide enough data points to guarantee that the Virasoro constraints always hold. Taking disconnected surfaces is necessary since for a connected toric surface Hirzebruch-Riemann-Roch gives the restriction on the data
\[\int_S c_1^2+\int_S c_2=12\chi^{\textup{hol}}(S)=12.\]

\begin{proposition}
\label{zariskidensity}
Fix $m\geq 0$. Given a (possibly disconnected) toric surface $S$ and classes $\gamma_1, \ldots, \gamma_m\in H^2(S)$, we associate to this data a $\left(\binom{m+1}{2}+m+2\right)$-tuple of rational numbers
$$\left\{\int_S \gamma_i\gamma_j\right\}_{1\leq i\leq j\leq m}
\cup \left\{\int_S\gamma_i c_1\right\}_{1\leq i\leq m}
\cup \left\{\int_S c_1^2, \int_S c_2\right\}.$$
By varying the toric surface and the classes $\gamma_j$, the set of possible such $\left(\binom{m+1}{2}+m+2\right)$-tuples is Zariski dense in $\QQ^{\binom{m+1}{2}+m+2}$. 
\end{proposition}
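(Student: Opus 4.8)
The plan is to realize every target tuple as a limit of tuples coming from \emph{explicit} disjoint unions of toric surfaces, and then invoke that a constructible set containing a Zariski-dense subset of a linear subspace — here all of $\QQ^{\binom{m+1}{2}+m+2}$ — is dense. The key point is that the association to $(S,\gamma_1,\dots,\gamma_m)$ of the tuple is additive under disjoint union: if $S = S'\sqcup S''$ and $\gamma_j = \gamma_j' + \gamma_j''$, then $\int_S\gamma_i\gamma_j = \int_{S'}\gamma_i'\gamma_j' + \int_{S''}\gamma_i''\gamma_j''$, $\int_S\gamma_ic_1 = \int_{S'}\gamma_i'c_1 + \int_{S''}\gamma_i''c_1$, and likewise for $\int c_1^2$, $\int c_2$. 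Hence the set $\Lambda_m\subseteq\QQ^{\binom{m+1}{2}+m+2}$ of achievable tuples is closed under addition; it therefore suffices to exhibit enough ``elementary'' tuples whose $\QQ_{\geq 0}$-span (or $\ZZ_{\geq 0}$-span) is Zariski dense, and since a sub-monoid of $\QQ^N$ that contains an open cone is dense, one really just needs the elementary tuples to positively span a full-dimensional cone.

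First I would handle the coordinates $\int c_1^2$ and $\int c_2$ together with the ``pure'' intersection data. Take $S=\PP^2$ with $H$ the hyperplane class: choosing each $\gamma_j\in\{0,H\}$ and scaling by taking several disjoint copies, one produces tuples where the Gram matrix $(\int\gamma_i\gamma_j)$ is an arbitrary $0/1$-combination, with $\int\gamma_ic_1 = 3\cdot(\text{coefficient})$ and $(\int c_1^2,\int c_2) = (9,3)$ per copy. Using instead $S=\PP^1\times\PP^1$ with classes among $\{0, f_1, f_2\}$ (the two rulings, $f_1^2=f_2^2=0$, $f_1f_2=1$, $c_1 = 2f_1+2f_2$, $c_2=4$) gives off-diagonal Gram entries without diagonal ones, and a different ratio $\int c_1^2 : \int c_2 = 8:4$. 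A Hirzebruch surface $\FF_a$ gives yet another point with $\int c_1^2 = 8$, $\int c_2 = 4$ but with $c_1$ pairing differently against the section and fiber classes, and one can also blow up points to decrease $\int c_1^2$ by $1$ while increasing $\int c_2$ by $1$ and introduce exceptional classes $E$ with $E^2=-1$, $Ec_1 = 1$. Assembling these building blocks as a single disconnected toric surface and taking $\gamma_j$ supported on chosen components, I would check — this is the one genuinely computational step — that the resulting vectors in $\QQ^{\binom{m+1}{2}+m+2}$ are linearly independent and in fact positively span a full-dimensional cone: independence of the ``diagonal'' directions $\int\gamma_i^2$ comes from $\PP^2$-copies, the off-diagonal $\int\gamma_i\gamma_j$ from $\PP^1\times\PP^1$-copies pairing the $i$-th and $j$-th rulings, the $\int\gamma_ic_1$ directions by adjusting how $\gamma_i$ meets $c_1$ (e.g.\ via exceptional curves $E$ on a blow-up, which contribute to $\int\gamma_ic_1$ and $\int\gamma_i^2$ with a ratio different from the $\PP^2$ contribution), and finally the two directions $\int c_1^2$, $\int c_2$ are separated because blow-ups move $(\int c_1^2,\int c_2)$ along $(-1,+1)$ while adding a $\PP^2$ moves it along $(9,3)$ — two non-proportional vectors.

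I expect the main obstacle to be the bookkeeping that shows the chosen elementary tuples genuinely attain \emph{full} rank $\binom{m+1}{2}+m+2$ while remaining ``positive'' (realizable as nonnegative integer combinations), since the constraints are coupled: every surface contributes simultaneously to the Gram block, the $c_1$-pairings, and the $(\int c_1^2,\int c_2)$ block, so one cannot move a single coordinate in isolation. The clean way around this is to argue rank, not explicit solvability: let $\Lambda_m$ be the additive monoid of achievable tuples; it contains the images of $N$ well-chosen configurations, and if those $N$ vectors span $\QQ^{\binom{m+1}{2}+m+2}$ over $\QQ$ then — because $\Lambda_m$ is also closed under the disjoint-union-scaling that multiplies a configuration's tuple by any positive integer, and a full-rank sub-monoid of a $\QQ$-vector space is Zariski dense (its $\QQ$-span is everything, and the only Zariski-closed set containing a $\QQ$-subspace of full dimension is the whole space) — we are done. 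So the proof reduces to: (1) additivity under disjoint union (immediate from the cohomology decomposition used already in \ref{subsec: disconnected}); (2) a finite explicit list of toric surfaces-with-classes; (3) a linear-algebra check that their tuples span. Step (3) is the crux, and I would organize it by upper-triangularity: order coordinates as (the $\int\gamma_i^2$; the $\int\gamma_i\gamma_j$ with $i<j$; the $\int\gamma_ic_1$; $\int c_1^2$; $\int c_2$), pick for each coordinate a configuration that is the ``first'' to activate it, and read off that the spanning matrix is block-triangular with invertible diagonal blocks.
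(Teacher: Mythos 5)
Your route is correct in substance but genuinely different from the paper's. The paper fixes one carefully built disconnected surface --- $N$ copies of $\PP^1\times\PP^1$, one of which is blown up $M$ times so that the last $m$ exceptional divisors $D_1,\dots,D_m$ are disjoint, plus a ruling class $D_0$ on another copy --- and varies the classes $\gamma_i=\sum_j a_{ij}D_j$ with rational coefficients; density is then deduced from the fact that $(\int_S c_1^2,\int_S c_2)=(8N-M,4N+M)$ sweeps a Zariski dense subset of $\QQ^2$ and that $A\mapsto -AA^t$ has image the negative semidefinite matrices, which is open in the usual topology and hence Zariski dense in $\textup{Sym}_m(\QQ)$. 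You instead exploit additivity of all the integrals under disjoint union to reduce to exhibiting $\binom{m+1}{2}+m+2$ linearly independent achievable tuples, and your building blocks do suffice: taking $\gamma_i=H$ on a $\PP^2$ gives (diagonal entry $1$, $c_1$-pairing $3$, $(9,3)$), taking $\gamma_i=E$ on a one-point blow-up gives (diagonal $-1$, pairing $1$, $(8,4)$), the two rulings of $\PP^1\times\PP^1$ assigned to $\gamma_i,\gamma_j$ activate the off-diagonal entry, and the empty-class configurations on $\PP^2$ and $\PP^1\times\PP^1$ give $(9,3)$ and $(8,4)$; a block-triangular check (off-diagonals, then for each $i$ the invertible $2\times2$ block $\bigl(\begin{smallmatrix}1&3\\-1&1\end{smallmatrix}\bigr)$, then the invertible block on $(\int c_1^2,\int c_2)$) gives full rank exactly as you predict. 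One justification needs repair: the monoid of achievable tuples is not Zariski dense ``because it contains a full $\QQ$-subspace'' --- it contains no subspace; rather, the set of $\ZZ_{\geq 0}$-combinations of a basis is the image of $\ZZ_{\geq 0}^D$ under an invertible linear map, and a polynomial vanishing on $\ZZ_{\geq 0}^D$ vanishes identically (induction on the number of variables), which gives the density you need. With that fixed, your argument is arguably more elementary than the paper's (no semidefiniteness or real-topology input), at the price of the finite rank verification; the paper's continuous variation of coefficients on a single surface avoids that bookkeeping.
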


\begin{proof}
We start with the union of $N\geq 2$ copies of $\PP^1\times \PP^1$. Picking one of the copies, we successively  perform $M$ toric blow-ups at points fixed by the torus action; we call $S$ the resulting disconnected surface. We do so in a way that the last $m$ blow-ups have disjoint exceptional divisors $D_1, \ldots, D_m$; this is possible as long as $M$ is large enough, namely $M\geq \max\{m,2m-4\}$ (for example, if $m=4$ we just blow-up the $4$ vertices of $\PP^1\times \PP^1$). Let $D_0$ be a divisor $[\p\times \PP^1]$ in another copy of $\PP^1\times \PP^1$. Let
\[\gamma_i=\sum_{j=0}^m a_{ij}D_j\]
with $a_{ij}\in \QQ$ for $1\leq i\leq m, 0\leq j\leq m$. 

One checks immediately that we have
\[ \int_S c_1(S)^2=8N-M\textup{ and }\int_S c_2(S)=4N+M\]
since blowing up one point increases the integral $\int_S c_2(S)$ by 1 and decreases $\int_S c_1(S)^2$ by 1.
 
The set of pairs
\[\{(8N-M,4N+M): N\geq 2, M\geq \max\{m,2m-4\}\}\subseteq \QQ^2\]
is Zariski dense in $\QQ^2$, so it's enough to show that fixing $M, N$ and varying $a_{ij}$ produces a Zariski dense set of $\left(\binom{m+1}{2}+m\right)$-tuples
\[\left\{\int_S\gamma_i\gamma_j\right\}_{1\leq i\leq j\leq m}\cup \left\{\int_S\gamma_ic_1\right\}_{1\leq i\leq m}.\]
By construction of the divisors $D_i$ we have
$$\int_S D_iD_j=\begin{cases}
0&\textup{ if }i\neq j \textup{ or }i=j=0\\
-1& \textup{ if }i=j>0
\end{cases}$$
and 
$$\int_S D_i c_1=2+D_i^2=\begin{cases}
2&\textup{ if }i=0\\
1& \textup{ if }i>0
\end{cases}$$
We refer to \cite[Theorems 8.2.3, 10.4.4]{cox} for the properties of toric surfaces required for this. Hence 
\[\int_S \gamma_i\gamma_j=-\sum_{k=1}^m a_{ik}a_{jk}\]
and
\[\int_S \gamma_i c_1=2a_{i0}+\sum_{j=1}^m a_{ij}.\]

If we let $a=\{a_{i0}\}_{1\leq i\leq m}\in \QQ^{m}$ and $A=\{a_{ij}\}_{1\leq i, j\leq m}\in M_{m\times m}(\QQ)$ we want to show that the map
\begin{align*}M_{m\times m}(\QQ)\times \QQ^m&\to \textup{Sym}_m(\QQ)\times \QQ^m\\
(A, a)&\mapsto (-AA^t, 2a+A\mathbf{1})
\end{align*}
has a Zariski dense image. Here $\mathbf{1}=(1, \ldots, 1)^t$ and $\textup{Sym}_m(\QQ)$ denotes the set of $m\times m$ symmetric matrices. To show this, it's enough to show that the map
\begin{align*}M_{m\times m}(\RR)&\to \textup{Sym}_m(\RR)\\
A&\mapsto -AA^t\end{align*}
has Zariski dense image since $M_{m\times m}(\QQ)$ is dense inside $M_{m\times m}(\RR)$. But the image of the latter map is precisely the set of negative semi-definite matrices, which is open in the standard topology and hence Zariski dense.
\end{proof}

\subsection{Proof of theorem \ref{main}: $(p,p)$ insertions}
\label{subsec: mainhodge}

\noindent We'll begin now the proof of theorem \ref{main}
with the case of $(p,p)$ insertions. More precisely, we'll prove theorem \ref{main} when $D$ is in the algebra $\DD^S_0$ generated by

\[\{\ch_k(\gamma): \gamma\in H^{p,p}(S) \textup{ for some }p=0,1,2\}.\]

The ingredients for this step are the universality statement in lemma \ref{universalvirasorohodge}, the result for toric surfaces proven previously (theorem \ref{maintoric}) and the Zariski density of proposition \ref{zariskidensity}.

\begin{proposition}
\label{basecase}
Theorem \ref{main} holds when $D\in \DD^S_0$ is in the algebra generated by descendents of $(p,p)$ classes.  
\end{proposition}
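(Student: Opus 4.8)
The plan is the by-now standard one: I would express $\langle \L_k D\rangle^S_n$ as the value of a universal polynomial in finitely many integrals of cohomology classes on $S$, observe that this polynomial vanishes on the Zariski dense locus of data arising from (disconnected) toric surfaces, and deduce that it vanishes identically.

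First I would reduce, using multilinearity of $\langle\cdot\rangle^S_n$ in the cohomology insertions, to the case $D=\prod_{i=1}^m\ch_{k_i}(\gamma_i)$ with each $\gamma_i$ homogeneous; since $D\in\DD^S_0$, each $\gamma_i$ may be taken to be either $1\in H^0(S)$, a class in $H^2(S)$, or the fundamental class of a point lying in a single connected component of $S$ — basis elements of $H^{0,0}$, $H^{1,1}$, $H^{2,2}$ respectively. Having fixed $k$, $n$, $m$, the $k_i$, and the type of each insertion (unit, divisor, or point — with its chosen component), I would invoke Lemma \ref{universalvirasorohodge} to get a polynomial $Q$ with rational coefficients, depending only on this discrete data, such that $\langle\L_k D\rangle^S_n=Q\big(\{P^\varepsilon_I\}\big)$ for \emph{every} (possibly disconnected) surface $S$ carrying insertions of these types, where $P^\varepsilon_I=\int_S c_1^{\varepsilon^1}c_2^{\varepsilon^2}\prod_{j\in I}\gamma_j$. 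Then I would determine which $P^\varepsilon_I$ occur non-trivially: a unit insertion simply drops out of the product; a $P^\varepsilon_I$ whose index set $I$ meets a point insertion vanishes for degree reasons unless $I$ is that single index and $\varepsilon=(0,0)$, in which case $P^\varepsilon_I=1$; and every remaining $P^\varepsilon_I$ involves only the divisorial insertions $\gamma_i\in H^2(S)$, hence by degree equals one of $\int_S\gamma_i\gamma_j$, $\int_S\gamma_i c_1$, $\int_S c_1^2$, $\int_S c_2$. Consequently $Q$ may be viewed as a polynomial in exactly the $\binom{m'+1}{2}+m'+2$ variables of Proposition \ref{zariskidensity}, where $m'\le m$ is the number of divisorial insertions.

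To conclude I would apply Corollary \ref{disconnectedtoric} to the disconnected toric surfaces and divisor classes $\gamma_i\in H^2$ constructed in Proposition \ref{zariskidensity} — leaving the unit and point insertions in place, the latter realized as point classes of individual components so that the corresponding integral is $1$, and taking enough components to accommodate all point insertions. This shows $Q=0$ on a Zariski dense subset of $\QQ^{\binom{m'+1}{2}+m'+2}$, hence $Q\equiv 0$, and therefore $\langle\L_k D\rangle^S_n=Q(\cdots)=0$ for the original surface and insertions, which is the desired vanishing.

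The step I expect to be most delicate is the bookkeeping of the middle paragraph: one must verify that the universal polynomial of Lemma \ref{universalvirasorohodge} depends only on the integral data that Proposition \ref{zariskidensity} can move independently — i.e. that unit and point insertions enter only through rigid (constant) values rather than new parameters — and that the toric models of Proposition \ref{zariskidensity} can be chosen with enough connected components to carry each point insertion. Both are routine given the degree constraints on the $P^\varepsilon_I$ and the flexibility in the construction of Proposition \ref{zariskidensity}, but they are where carelessness would break the argument.
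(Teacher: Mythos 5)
Your argument is correct and is essentially the paper's proof: the same three ingredients (Lemma \ref{universalvirasorohodge}, Corollary \ref{disconnectedtoric}, Proposition \ref{zariskidensity}) combined in the same way, with your middle paragraph just making explicit the degree bookkeeping that identifies the surviving integrals $P^\varepsilon_I$ with the variables of Proposition \ref{zariskidensity}. The one refinement you should make is to first reduce to connected $S$ via Corollary \ref{disconnected} --- as the paper does --- so that $1$ and $\p$ genuinely span $H^{0,0}$ and $H^{2,2}$; for a disconnected target surface the basis of $H^{0,0}$ consists of the component units $1_{S_a}$, whose integrals $\int_{S_a}c_1^{\varepsilon^1}c_2^{\varepsilon^2}\prod_{j\in I}\gamma_j$ are restricted to a single component and are not among the parameters that Proposition \ref{zariskidensity} moves.
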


\begin{proof}
By corollary \ref{disconnected} it's enough to prove the result when $S$ is connected, and in that case we may assume that $D$ has the form
\[D=\prod_{i=1}^s \ch_{k_i}(\1)\prod_{i=1}^t \ch_{\ell_i}(\p)\prod_{i=1}^m \ch_{m_i}(\gamma_i)\]
where $s,t,m, k_i, \ell_i, m_i\geq 0$ are integers, $\gamma_i\in H^{1,1}(S)$ and $\p\in H^4(S)$ is such that $\int_S \p=1$. By lemma \ref{universalvirasorohodge}, if we fix $k,s,t,m, k_i, \ell_i, m_i$ there is a polynomial in $\binom{m+1}{2}+m+2$ variable $F$ such that 
\[\langle \L_k D\rangle^S_n=F\left(\left\{\int_S\gamma_i\gamma_j\right\}_{1\leq i\leq j\leq m}, \left\{\int_S \gamma_i c_1\right\}_{1\leq i\leq m}, \int_S c_1^2, \int_S c_2\right).\]

Since the result holds for (disconnected) toric surfaces by corollary \ref{disconnectedtoric} and by proposition \ref{zariskidensity}, the polynomial $F$ vanishes in a Zariski dense set, and thus is identically 0. 
\end{proof}

\subsection{Proof of theorem \ref{main}: Non-$(p,p)$ insertions}
\label{subsec: mainnonhodge}

\noindent For the proof in the general case, we proceed by induction on the amount of non-$(p,p)$ insertions. To be more precise,  we consider a basis $\alpha_1, \ldots, \alpha_{h^{0,2}}$ of $H^{0,2}(S)$ and its dual basis $\beta_1,\ldots, \beta_{h^{0,2}}\in H^{2,0}(S)$, that is,
\[\int_S \alpha_i\beta_j=\delta_{ij}.\]
The algebra $\DD^S$ admits a filtration
\[\DD^S_0\subseteq \DD^S_1\subseteq \ldots \subseteq \DD^S_{h^{0,2}}=\DD^S\]
defined as follows: $\DD^S_l$ is the algebra generated by descendents of $(p,p)$ classes and descendents of $\alpha_1, \ldots, \alpha_l, \beta_1, \ldots, \beta_l$. In particular, $\DD^S_0$ agrees with the previous definition. We prove that theorem \ref{main} holds for every $D\in \DD^S_l$ by induction on $l$. The base case was proposition \ref{basecase} in the previous section. From now on we fix $l$ and assume that \ref{main} holds for $D_0\in \DD^S_{l-1}$. We want to show that for any $s,t, k_1, \ldots, k_s,\ell_1, \ldots, \ell_t$ and $D_0\in \DD^S_{l-1}$ we have
\[\left\langle \L_k\left(D\right)\right\rangle^S_n=0\]
where 
\begin{equation}\label{Dinduction}
D=\left(\prod_{i=1}^s\ch_{k_i}(\alpha_l)\prod_{i=1}^t\ch_{\ell_i}(\beta_l)\right)D_0
\end{equation}

Once again we may assume that $S$ is connected, and thus we can write
\[D_0=\prod_{i=1}^u \ch_{m_i}(1_S)\prod_{i=1}^v \ch_{n_i}(\gamma_i)\]
where $1_S\in H^0(S)$ is the fundamental class and $\gamma_i$ are classes either in $H^{p,p}(S)$ for $p>0$ or in $\{\alpha_1, \ldots, \alpha_{l-1}, \beta_1, \ldots, \beta_{l-1}\}$. In either case we have $\alpha_l\gamma_i=0=\beta_l\gamma_i$. 

The idea to deal with the non-$(p,p)$ classes $\alpha_l, \beta_l$ is to add more (toric) connected components and replace the classes $\alpha_l,\beta_l$ with classes in $H^0$ and $H^4$ of the new connected component. We define
\[E=E_1\sqcup \ldots \sqcup E_N\textup{ and }T=S\sqcup E\]
where $E_1, \ldots, E_N$ are $N$ copies of $\PP^2$ (or any other toric surface) and $N>s$. We let $1_i\in H^0(T)$ and $\p_i\in H^4(T)$ denote the fundamental class $[E_i]$ and the point class $\p_i$ of the connected component $E_i$. Similarly we consider $1_S\in H^0(T)$. Let 
\[\1'=\sum_{i=1}^N1_i\textup{ and }\1=\1'+1_S.\]
Note that $\1$ is the unit of $H^\ast(T)$. We denote
\[\tilde D_0=\prod_{i=1}^u \ch_{m_i}(\1)\prod_{i=1}^v \ch_{n_i}(\gamma_i)\in \DD^T_{l-1}\subseteq \DD^T.\]

We will now introduce two new classes in $H^0(T;\CC)$ and $H^4(T;\CC)$. Note that we can extend the definitions of descendents to allow classes in $H^\ast(S; \CC)$; we replace the algebra $\DD^S$ by $\DD^S\otimes \CC$, extend $\L_k$ linearly and everything we previously said (for example the universality statements in \ref{universal} and \ref{universalvirasorohodge}) still holds. We let
\[\alpha=\sum_{i=1}^N \omega^i 1_i\in H^0(T; \CC) \textup{ and }\beta=\frac{1}{N}\sum_{i=1}^N \omega^{-i} \p_i\in H^4(T; \CC)\]
where $\omega=e^{\frac{2\pi i}{N}}$ is a primitive $N$-th root of unity. 

\begin{claim}The sets of integrals of
\[\left(\underbrace{\alpha_l, \ldots, \alpha_l}_{s}, \underbrace{\beta_l, \ldots, \beta_l}_t, \underbrace{\1, \ldots, \1}_u,\gamma_1, \ldots, \gamma_v\right)\]
and 
\[\left(\underbrace{\alpha, \ldots, \alpha}_{s}, \underbrace{\beta, \ldots, \beta}_t, \underbrace{\1, \ldots, \1}_u,\gamma_1, \ldots, \gamma_v\right)\]
are the same.
\end{claim}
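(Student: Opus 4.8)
The plan is to compare the two lists of classes entry by entry, recalling that a set of integrals consists of the numbers $\int_S c_1^{\varepsilon^1}c_2^{\varepsilon^2}\prod_{j\in I}\gamma_j$ for $I$ a subset of indices and $\varepsilon\in\{(0,0),(1,0),(2,0),(0,1)\}$. Since $\1$ is the unit of $H^\ast(T)$ in both cases and the Chern classes $c_1,c_2$ of $T=S\sqcup E$ restrict to the Chern classes of $S$ on the $S$-component and to those of the copies of $\PP^2$ on the $E$-components, the only thing that can differ is how the classes $\alpha_l,\beta_l$ versus $\alpha,\beta$ pair with each other, with the $\gamma_i$, and with $c_1,c_2$. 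So the claim reduces to checking that every monomial integral involving some copies of $\alpha$ or $\beta$ on $T$ equals the corresponding monomial integral involving $\alpha_l$ or $\beta_l$ on $S$.

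The key computation is the following. First, since $\alpha,\beta$ are supported on the $E_i$ components while $\gamma_1,\dots,\gamma_v$ and $c_1(S),c_2(S)$ are supported on $S$ (and $\gamma_i\alpha_l=\gamma_i\beta_l=0$ on $S$ by our choice of $D_0$), a product of classes that mixes an $\alpha$ or $\beta$ with a $\gamma_i$ or with $c_1(S)^{\varepsilon^1}c_2(S)^{\varepsilon^2}$ integrates to $0$ on $T$, exactly as the corresponding product with $\alpha_l,\beta_l$ integrates to $0$ on $S$. Next, on the $\PP^2$-components, $\alpha$ and $\beta$ are in $H^0$ and $H^4$ respectively, so the only nonzero products among them must pair exactly one $\alpha$ with exactly one $\beta$ (two copies of $\beta$ multiply to zero since $H^8(\PP^2)=0$; a product of only $\alpha$'s lands in $H^0$ and integrates to zero; a lone $\beta$ integrates to $1$ on each component but this situation does not arise among the relevant monomials because the set of integrals only records $\prod_{j\in I}\gamma_j$ and we must match with $S$). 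Concretely, $\int_T \alpha\beta=\frac1N\sum_{i,j}\omega^{i-j}\int_T 1_i\p_j=\frac1N\sum_{i}\omega^{0}=1=\int_S\alpha_l\beta_l$, and the root-of-unity cancellation $\sum_i\omega^{i-j}=0$ for $i\ne j$ kills all cross terms; similarly $\int_T\alpha^2\beta = \frac1N\sum_{i}\omega^{2i}\omega^{-i}\cdot(\text{something in }H^0)=0$ matches $\int_S\alpha_l^2\beta_l=0$ (as $\alpha_l^2\in H^{0,4}(S)=0$), and more generally any monomial of total degree exceeding $4$ in the $\alpha,\beta$'s vanishes on each $\PP^2$ just as it does on $S$. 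Thus every integral in one set equals the corresponding integral in the other.

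The one point that requires care — and which I expect to be the main (minor) obstacle — is bookkeeping: one must be careful that the set of integrals is indexed by subsets $I$ of the full index set $\{1,\dots,s+t+u+v\}$, so the matching of integrals is a matching indexed by which of the $s$ slots carry $\alpha_l$ (resp.\ $\alpha$) and which of the $t$ slots carry $\beta_l$ (resp.\ $\beta$), together with a choice of $\varepsilon$ and a choice of subset of the remaining $u+v$ slots. For a fixed such choice, if the number of $\alpha$-slots selected does not equal the number of $\beta$-slots selected, or if any $\gamma_i$-slot or any nontrivial $\varepsilon$ is selected alongside an $\alpha$- or $\beta$-slot, both integrals are $0$; if exactly $r$ copies of $\alpha$ and $r$ copies of $\beta$ are selected with nothing else, the $T$-integral is $\frac{1}{N^r}\sum_{i_1,\dots,i_r,j_1,\dots,j_r}\omega^{\sum i_a-\sum j_b}\int_T 1_{i_1}\cdots 1_{i_r}\p_{j_1}\cdots\p_{j_r}$, which vanishes unless $r\le 1$ (since three or more of the $1_i,\p_j$ on a single $\PP^2$ multiply to zero in $H^\ast(\PP^2)$ and different components multiply to zero), giving $1$ if $r=1$ and matching $\int_S\alpha_l\beta_l=1$, and giving the trivial integral $\int_S=0$-type contribution when $r=0$ with no other insertions only in the degenerate empty case. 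Since $N>s$, there is no arithmetic obstruction from $\omega$ being a low-order root of unity. This establishes that the two sets of integrals coincide, proving the claim.
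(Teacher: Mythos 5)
Your overall strategy is the same as the paper's (match the integrals case by case, using the root-of-unity cancellation on the $\PP^2$ components), but there is one family of cases where your justification fails: the integrals with nontrivial $\varepsilon$ and only $\alpha$-slots selected, i.e.\ $\int_T \alpha^a c_1(T)^2$ and $\int_T \alpha^a c_2(T)$ for $1\le a\le s$. In the definition of the set of integrals the Chern classes are those of the ambient surface, here $T=S\sqcup E$, not of $S$; your support argument (``a product that mixes an $\alpha$ or $\beta$ with $c_1(S)^{\varepsilon^1}c_2(S)^{\varepsilon^2}$ integrates to $0$'') only disposes of the $S$-component of $c_1(T),c_2(T)$. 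On the $E$-components the class $\alpha^a c_1(E)^2$ is \emph{not} zero: its integral is $9\sum_{i=1}^N\omega^{ai}$ (and $3\sum_i\omega^{ai}$ for $c_2$), which vanishes only because of the root-of-unity cancellation together with $1\le a\le s<N$ --- this is exactly equation \eqref{integral3} in the paper's proof. In your bookkeeping paragraph this branch is covered only by the bare assertion ``if any nontrivial $\varepsilon$ is selected alongside an $\alpha$- or $\beta$-slot, both integrals are $0$,'' with no argument that applies to it; the matching vanishing on the other side, $\int_S\alpha_l^a c_1(S)^2=\int_S\alpha_l^a c_2(S)=0$, holds for trivial degree reasons, so the two sides agree, but your proof as written does not establish the $T$-side vanishing. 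Since you already use the same cancellation for $\int_T\alpha^j\beta$, the fix is immediate, but it is a genuine missing step rather than a routine omission.

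Two smaller inaccuracies, which do not affect the conclusions but should be repaired: the vanishing for $r\ge 2$ pairs of $\alpha,\beta$ holds because any term then contains at least two point classes and $\p_i\p_j=0$ in $H^\ast(E)$ (your stated reason, that ``three or more of the $1_i,\p_j$ on a single $\PP^2$ multiply to zero,'' is false, e.g.\ $1_i\cdot 1_i\cdot\p_i=\p_i$); and in the computation of $\int_T\alpha\beta$ the cross terms die because $1_i\p_j=0$ for $i\ne j$ (disjoint components), not because of a root-of-unity sum. With these corrections your argument coincides with the paper's: verify $\int_T\alpha\beta=1$, $\int_T\alpha^j\beta=0$ for $j\ne 1$, $\int_T\alpha^jc_1(T)^2=\int_T\alpha^jc_2(T)=0$ for $1\le j\le s$, note the same identities for $\alpha_l,\beta_l$, and use $\alpha\gamma_i=\beta\gamma_i=\alpha_l\gamma_i=\beta_l\gamma_i=0$ for the mixed terms.
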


\begin{proof}We have by construction
\begin{equation}\label{integral1}\int_S \alpha \beta=\int_T\left(\frac{1}{N}\sum_{i=1}^N \p_i\right)=1
\end{equation}
and, for $j=0, 2, \ldots, s$,
\begin{equation}\label{integral2}\int_T \alpha^j\beta=\frac{1}{N}\sum_{i=0}^N \omega^{(j-1)i}=0
\end{equation}
since $s<N$. Similarly 
\begin{equation}\label{integral3}\int_S \alpha^j c_1(T)^2=0 \textup{ and }\int_S \alpha^j c_2(T)=0
\end{equation}
for $j=1, \ldots, s$. Equations \eqref{integral1}, \eqref{integral2} and \eqref{integral3} also hold replacing $\alpha, \beta$ by $\alpha_l, \beta_l$, and moreover we have 
\[\alpha_l\gamma_i=\beta_l\gamma_i=\alpha\gamma_i=\beta\gamma_i=0 \textup{ for }i=1, \ldots, v.\]
These facts prove the claim.
\end{proof}

By the claim and by proposition \ref{universalvirasorohodge} it follows that
\begin{align}\label{vanishing11}\nonumber
&\left\langle\L_k\left(\left(\prod_{i=1}^s\ch_{k_i}(\alpha_l)\prod_{i=1}^t\ch_{\ell_i}(\beta_l)\right) \tilde{D}_0 \right) \right \rangle^T_n\\
&\textup{ }\quad\quad \quad=\left\langle\L_k\left(\left(\prod_{i=1}^s\ch_{k_i}(\alpha)\prod_{i=1}^t\ch_{\ell_i}(\beta)\right)\tilde{D}_0 \right) \right \rangle^T_n
=0.\end{align}
The vanishing holds by the induction hypothesis since 
\[\left(\prod_{i=1}^s\ch_{k_i}(\alpha)\prod_{i=1}^t\ch_{\ell_i}(\beta)\right)\tilde{D}_0\in \DD^T_{l-1}.\]

Equation \eqref{vanishing11} is almost what we wanted except that we replaced the appearances of $1_S$ in $D_0$ by $\1=1_S+\1'$ in $\tilde D_0$. If there are no such appearances, i.e. $u=0$, then $D_0=\tilde D_0$ and by~\eqref{virasorodisconnected}
\begin{equation}\label{TtoS}0=\left\langle \L_k^T(D)\right\rangle^T_n=\left\langle \L_k^S(D)\right\rangle^S_n+\left\langle D\right\rangle^S_{n-k/2}\left\langle \L_k^E(1_{\DD^E})\right\rangle^E_{k/2}.
\end{equation}
Since we know already that $\left\langle \L_k^E(1_{\DD^E})\right\rangle^E_{k/2}=0$ vanishes, it follows that $\left\langle \L_k^S(D)\right\rangle^S_n=0$ also vanishes. Here the $1_{\DD^E}$ denotes the unit in the algebra $\DD^E$.

To finish the proof we now argue by induction on $u$. We abbreviate
\[B=\prod_{i=1}^s\ch_{k_i}(\alpha_l)\prod_{i=1}^t\ch_{\ell_i}(\beta_l)\prod_{i=1}^v \ch_{n_i}(\gamma_i).\]
Now we have
\begin{align*}0=\left\langle \L_k^T\left(B\prod_{i=1}^u\ch_{m_i}(\1)\right)\right \rangle ^T_n&=\left\langle \L_k^T\left(D\right)\right \rangle ^T_n\\
&+\sum_{I\subsetneq [u]}\left\langle \L_k^T\left(\left(\prod_{i\in I}\ch_{m_i}(1_S)\prod_{i\in [u]\setminus I}\ch_{m_i}(\1')\right)B\right)\right \rangle ^T_n.
\end{align*}
For $I\subsetneq [u]$ we can write by equation \ref{virasorodisconnected}
\begin{align*}
&\left\langle \L_k^T\left(\left(\prod_{i\in I}\ch_{m_i}(1_S)\prod_{i\in [u]\setminus I}\ch_{m_i}(\1')\right)B\right)\right \rangle ^T_n\\
&\quad \quad \quad=\sum_{n_1+n_2=n}\left\langle \L_k^S\left(\left(\prod_{i\in I}\ch_{m_i}(1_S)\right)B\right)\right \rangle^S_{n_1}\left\langle \prod_{i\in [u]\setminus I}\ch_{m_i}(\1') \right \rangle ^E_{n_2}\\
&\quad \quad \quad+
\sum_{n_1+n_2=n}\left\langle \left(\prod_{i\in I}\ch_{m_i}(1_S)\right)B\right \rangle^S_{n_1}\left\langle\L_k^E\left( \prod_{i\in [u]\setminus I}\ch_{m_i}(\1') \right)\right \rangle ^E_{n_2}
\end{align*}
and this expression must vanish: the second line vanishes by the induction hypothesis on $u$ since $|I|<u$ and the third line vanishes since theorem \ref{main} holds for $E$. So we conclude that 
\[\langle\L_k^T(D)\rangle^T_n=0\] and again using \eqref{TtoS} we find
\[\langle\L_k^S(D)\rangle^S_n=0.\]

\section{The cubic 3-fold}
\label{sec: cubic}

\noindent Let $X\subseteq \PP^4$ be a smooth cubic hypersurface and let $F\in H^0(\PP^4, \O(3))$ be the degree 3 polynomial defining $X$.

Moreover let $F(X)$ be the Fano variety of lines in $X$, i.e.,
\[F(X)=\{\ell\in \GG(1, 4): \ell \subseteq X\}.\]
Here $\GG(1,4)=G(2,5)$ is the Grassmanian of lines on $\PP^4$ or, equivalently, the Grassmanian of 2-subspaces of $\CC^5$.

\subsection{Basic facts about $X$}

\noindent Let $j: X\hookrightarrow \PP^4$ be the inclusion. We will denote by $H\in H^2(\PP^4)$ the hyperplane class and, when confusion doesn't arise, we'll also denote by $H$ the pullback $j^\ast H\in H^2(X)$.  By the Lefschetz hyperplane theorem $j^\ast$ induces an isomorphism $H^{k}(X)\cong H^k(\PP^4)$ for $k<3$ and $j_\ast$ induces an isomorphism $H^{k}(X)\cong H^{k+2}(\PP^4)$ for $k>3$; moreover
\[j_\ast j^\ast H^j=[X]H^j=3H^{j+1}\in H^\ast(\PP^4).\]
Thus $H^\ast(X)$ is generated outside degree 3 by $1, H, \frac{1}{3}H^2, \frac{1}{3}H^3$. 

The Chern class of $X$ is computed via the normal sequence to get
\[c(X)=\frac{j^\ast c(\PP^4)}{j^\ast c(\O_{\PP^4}(3))}=j^\ast\frac{(1+H)^5}{1+3H}=1+2H+4H^2-2H^3.\]
In particular, $\chi(X)=\int_X (-2H^3)=-6$ so it follows that
\[b_3(X)=10.\]

More generally, we can compute $\chi_{-y}$ and get the Hodge numbers $h^{3,0}=h^{0,3}=0$ and $h^{2,1}=h^{1,2}=5$ (see \cite[Theorem 1.11]{cubic} and the table afterwards). 

The Gromov-Witten theory of $X$ is reasonably understood in genus 0, but very hard to compute in higher genus. We refer to \cite{quantumcohprimitivehu} for a reconstruction theorem of genus 0 Gromov-Witten invariants of $X$ and a discussion about some higher genus invariants. The Gromov-Witten Virasoro constraints are not known for $X$.

\subsection{Basic facts about $F(X)$}

\noindent Denote by $\S$ the tautological rank 2 bundle over the Grassmannian $\GG(1,4)$ and by $\Q=\O_{\GG}^{\oplus 5}/\S$ the quotient rank 3 bundle. The Fano variety $F(X)$ is a smooth closed 2-dimensional subvariety of $\GG(1, 4)$ (see \cite[Corollary 1.14]{cubic}). The Fano variety can be described as the zero-set of a section $s_F$, canonically determined by $F$, of the rank 4 bundle $\Sym^3(\S^\ast)$ over $\GG(1, 4)$. In particular, we can compute the class $[F(X)]\in H_4(\GG(1,4))\cong H^8(\GG(1, 4))$ in terms of the Chern classes $c_1=c_1(\S)$ and $c_2=c_2(\S)$ of the tautological bundle $\S$:
\[[F(X)]=c_4(\Sym^3(\S^\ast))=18c_1^2c_2+9c_2^2\in H^8(\GG(1, 4)).\]

We also denote by $c_1, c_2$ the pullbacks of $c_1, c_2$ to $F(X)$ via the inclusion $F(X)\hookrightarrow \GG(1,4)$; note that $c_1=-g$ where $g$ is the Plücker polarization. It will later be useful to have the following integrals:

\begin{equation}
\label{integralsfano}
\int_{F(X)} c_1^2=45\textup{ and }\int_{F(X)} c_2=27.
\end{equation}
These are computed using the expression of $[F(X)]$ and the relations
\[0=c_4(\Q)=c_2^2-3c_1^2c_2+c_1^4 \textup{ and }0=c_5(\Q)=-3c_1c_2^2+4c_1^3c_2-c_1^5\]
in $H^\ast(\GG(1, 4))$ between the generators $c_1, c_2$. From those we also have 
\[2c_2^3=2c_1^2c_2^2=c_1^4c_2.\]
The computation is then finished with $\int_{\GG(1,4)}c_2^3=1$ (see \cite[Corollary 4.2]{eisenbudharris}).

A description of the Hodge structure of $F(X)$ is given in \cite{cubic} and we will quickly explain it. We introduce the universal line $\L=\PP(\S_{|F(X)})$; set theoretically $\L$ is described as
\[\L=\{(x, \ell)\in X\times F(X): x\in \ell\}.\]
Let $\pi^\L_X: \L\to X$ and $\pi^\L_F: \L\to F(X)$ be the obvious projections. We let 
\[\varphi=(\pi_F^\L)_\ast(\pi_X^\L)^\ast: H^3(X)\to H^1(F(X)).\]
It's proven in \cite[Proposition 4.2]{cubic} that $\varphi$ is an isomorphism. In particular, we get the Hodge numbers $h^{1,0}(F(X))=5=h^{0,1}(F(X))$. By \cite[Lemma 2.3]{cubic} the product on cohomology induces an isomorphism $\wedge^2 H^1(F(X))\cong H^2(F(X))$, thus $h^{2,0}(F(X))=h^{0,2}(F(X))=10$ and $h^{1,1}(F(X))=25$. Finally, \cite[Proposition 4.2]{cubic} also gives the identity
\begin{equation}
\int_{F(X)}\varphi(\alpha)\varphi(\beta)c_1=6\int_X \alpha \beta \textup{ for all }\alpha, \beta\in H^3(X).
\end{equation}

\subsection{Virasoro conjecture in the line class of the cubic 3-fold}

\noindent In the next two sections we'll explain how to compute the full theory of stable pairs with descendents for the line class of the cubic 3-fold. We state here the list of all the relevant partition functions:

\begin{theorem}
\label{fullPTcubic}
Let $X$ be the cubic 3-fold and $\beta\in H_2(X; \beta)$ be the line class. Writing $Z_{\textup{PT}}(D)$ for $Z_{\textup{PT}}^X(q|D)_\beta$ we have:
\begin{align}
&Z_{\textup{PT}}(\ch_4(1) \ch_4(1))=\frac{5(q-44q^2+126q^3-44q^4+q^5)}{4(1+q)^4}\\
&Z_{\textup{PT}}(\ch_4(1) \ch_3(H))=\frac{15(q-5q^2+5q^3-q^4)}{4(1+q)^3}\\
&Z_{\textup{PT}}(\ch_4(1) \ch_2(H^2))=\frac{15(-q+4q^2-q^3)}{2(1+q)^2}\\
&Z_{\textup{PT}}(\ch_3(H) \ch_3(H))=\frac{45q}{4}\\
&Z_{\textup{PT}}(\ch_3(H) \ch_2(H^2))=\frac{45(-q+q^2)}{2(1+q)}\\
&Z_{\textup{PT}}(\ch_2(H^2) \ch_2(H^2))=45q\\
&Z_{\textup{PT}}(\ch_5(1))=\frac{15(q-5q^2+5q^3-q^4)}{4(1+q)^3}\\
&Z_{\textup{PT}}(\ch_4(H))=\frac{21q}{4}\\
&Z_{\textup{PT}}(\ch_3(H^2))=\frac{45(-q+q^2)}{2(1+q)}\\
&Z_{\textup{PT}}(\ch_2(H^3))=18q\\
&Z_{\textup{PT}}(\ch_2(\gamma)\ch_3(\gamma'))=\frac{3(q-q^2)}{1+q}\int_X \gamma \gamma'\\
&Z_{\textup{PT}}(\ch_2(\gamma)\ch_2(\gamma')\ch_4(1))=\frac{q-4q^2+q^3}{(1+q)^2}\int_X \gamma \gamma'\\
&Z_{\textup{PT}}(\ch_2(\gamma)\ch_2(\gamma')\ch_3(H))=\frac{3(q-q^2)}{1+q}\int_X \gamma \gamma'\\
&Z_{\textup{PT}}(\ch_2(\gamma)\ch_2(\gamma')\ch_2(H^2))=-6q\int_X \gamma \gamma'\\
&Z_{\textup{PT}}(\ch_2(\gamma_1)\ch_2(\gamma_2)\ch_2(\gamma_3)\ch_2(\gamma_4))=q\left(
\left(\int_X \gamma_1\gamma_2\right)\left(\int_X \gamma_3\gamma_4\right)\right. \nonumber \\
&\left.\quad+\left(\int_X \gamma_1\gamma_4\right)\left(\int_X \gamma_2\gamma_3\right)
+\left(\int_X \gamma_1\gamma_3\right)\left(\int_X \gamma_4\gamma_2\right)\right)
\end{align}
for $\gamma, \gamma', \gamma_i\in H^3(X)$.
In particular, conjecture \ref{conj: rationality} (rationality and functional equation) holds in this case.
\end{theorem}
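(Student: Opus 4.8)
\textbf{Proof strategy for Theorem \ref{fullPTcubic}.}

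The plan is to compute each of the listed partition functions directly from the definitions, exploiting the fact that the moduli spaces $P_n(X,\beta)$ for the line class $\beta$ are smooth and admit explicit descriptions via the Fano variety of lines $F(X)$. First I would handle the case $n=1$: a stable pair with $n=1$ supported on a line is just the structure sheaf $\O_\ell$ of a line $\ell \subseteq X$, so $P_1(X,\beta) \cong F(X)$, which is smooth of dimension $2=d_\beta$, hence $[P_1(X,\beta)]^{\textup{vir}} = [F(X)]$. For $n>1$ a stable pair supported on a (fixed) line $\ell$ is $\O_\ell \to F$ with $\chi(F) = n$ and $F/\O_\ell$ of length $n-1$ supported on $\ell\cong\PP^1$; this is classified by an effective divisor of degree $n-1$ on $\ell$, so $P_n(X,\beta)$ fibers over $F(X)$ with fiber $\PP^{n-1} = \textup{Sym}^{n-1}(\PP^1)$. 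I would identify this total space as a projective bundle (the symmetric power of the universal line $\L$) over $F(X)$, check smoothness and that the dimension is again $2$, and argue that the obstruction theory produces as virtual class the top Chern class of an excess bundle, which I would compute explicitly in terms of $c_1,c_2$ of the tautological bundle $\S$ and the relative hyperplane class.

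Next I would compute the descendent classes $\ch_k(\gamma) \in H^\ast(P_n(X,\beta))$ by applying Grothendieck--Riemann--Roch to the universal sheaf $\FF$ on $X\times P_n(X,\beta)$, using the push-pull through the universal line $\L$ and the geometry recalled in the excerpt: the isomorphism $\varphi\colon H^3(X)\xrightarrow{\sim} H^1(F(X))$, the identity $\int_{F(X)}\varphi(\alpha)\varphi(\beta)c_1 = 6\int_X\alpha\beta$, and the integrals $\int_{F(X)} c_1^2 = 45$, $\int_{F(X)} c_2 = 27$ from \eqref{integralsfano}. For the descendents of $1, H, H^2, H^3$ the relevant computation lives in the cohomology of the projective bundle over $F(X)$ and reduces, after integrating out the fiber, to intersection numbers on $F(X)$ expressible via $c_1, c_2$ and the relations $c_4(\Q) = c_5(\Q) = 0$. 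For the descendents of $\gamma \in H^3(X)$ one uses $\varphi$ to transport everything to $H^1(F(X))$ and the displayed integral formula to evaluate; the product of two or more such insertions gives the symmetrized intersection pairings appearing in the last formulas of the theorem. Assembling $\sum_n q^n$ times these numbers yields the stated rational functions; closed-form summation of the resulting geometric-type series is routine.

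Finally, the rationality and functional equation (Conjecture \ref{conj: rationality}) follow by inspection: each displayed $Z_{\textup{PT}}(D)$ is visibly a rational function of $q$, and one checks the symmetry $Z_{\textup{PT}}(q^{-1}\mid D) = (-1)^{\sum k_j} q^{-d_\beta} Z_{\textup{PT}}(q\mid D)$ with $d_\beta = 2$ directly on each expression — e.g. numerators are (anti)palindromic of the appropriate degree and denominators are powers of $(1+q)$. The main obstacle I expect is the $n>1$ case: correctly identifying the moduli space $P_n(X,\beta)$ as a smooth variety, pinning down its obstruction theory, and verifying that the resulting virtual class is the expected excess Chern class — once this is in hand, the descendent computations are a (lengthy but mechanical) exercise in Chern-class bookkeeping on a projective bundle over $F(X)$. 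A secondary subtlety is bounding the range of $n$ for which $P_n(X,\beta)$ is nonempty and checking that the stability condition (cokernel of dimension $0$) does not impose extra constraints beyond "effective divisor on $\ell$", which I would address by a local analysis of pairs supported on a single line.
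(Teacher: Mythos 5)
Your plan is essentially the paper's proof: identify $P_1(X,\beta)\cong F(X)$ with virtual class the fundamental class, identify $P_{n+1}(X,\beta)$ as the relative symmetric product $\PP_{F(X)}(\Sym^n\S)$ of the universal line (the paper uses the Gorenstein-curve correspondence between stable pairs and effective divisors), extract the virtual class as the Euler class of the obstruction bundle $Rp_\ast(\O_{\D}(\D)\otimes K_X)^\vee$ (here the vanishing $H^1(\N_{L/X})=0$ for all lines on the cubic is the input that makes the obstruction theory a genuine bundle), compute all descendents by Grothendieck--Riemann--Roch through the universal curve, and sum the resulting series.

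One concrete gap in your list of inputs: for the last formula, with four insertions $\ch_2(\gamma_i)$, $\gamma_i\in H^3(X)$, the identity $\int_{F(X)}\varphi(\alpha)\varphi(\beta)c_1=6\int_X\alpha\beta$ does \emph{not} determine the answer. The contributions from $n>1$ vanish for degree reasons (the virtual class always carries a factor of $c_1$ and not enough powers of $\zeta_n$ survive the fiber integration), so everything reduces to the quartic intersection form on $H^1(F(X))$,
\[\int_{F(X)}\varphi(\gamma_1)\varphi(\gamma_2)\varphi(\gamma_3)\varphi(\gamma_4),\]
which is genuinely additional data about the Fano surface; the paper imports it from Hu's computation (\cite[Theorem 6.7, iv)]{quantumcohprimitivehu}). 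With that single extra ingredient your argument goes through as in the paper; the remaining points you flag (stability on a single line, identification of the obstruction bundle) are handled exactly as you propose.
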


This calculation will be explained, modulo the computational steps, in the next two sections; section \ref{cubic3foldn1} will compute the coefficient of $q^1$ in these partition functions and, using that, we'll compute the full partition function in section \ref{cubic3foldngeneral}.

The explicit computation allows us to verify the Virasoro constraints in this case, proving theorem \ref{maincubic}. Indeed, it's enough to check a finite amount of relations since, according to the next proposition, we can restrict ourselves to products of descendents with positive cohomological degree. The next proposition holds in general for any $X$, $\beta$ and not only for the cubic 3-fold with the line class.

\begin{proposition}
\label{reductionvirasoro}
Suppose that conjecture \ref{conj: virasoro} holds for some $D\in \DDpt$, that is,
\[\langle \L_k D\rangle_{n, \beta}^{X, \textup{PT}} =0.\]
Then conjecture \ref{conj: virasoro} also holds for 
\[\ch_0(\gamma)D, \ch_1(\gamma)D, \ch_2(1)D, \ch_2(\delta)D, \ch_3(1)D\]
for any $\gamma\in H^\ast(X)$, $\delta\in H^{1,1}(X)$.
\end{proposition}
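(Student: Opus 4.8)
The strategy is to show that each of the five operations $D \mapsto \ch_0(\gamma)D$, $\ch_1(\gamma)D$, $\ch_2(1)D$, $\ch_2(\delta)D$, $\ch_3(1)D$ sends the kernel of $\langle \L_k(\,\cdot\,)\rangle_{n,\beta}^{X,\textup{PT}}$ (for all $n$ simultaneously) into itself. The mechanism in each case is a commutator computation: I would compute $[\L_k, \ch_j(\gamma)]$ as an operator on $\DDpt$ and observe that it is again a multiplication operator by one of the ``trivial'' descendents, so that when paired against the virtual class it either vanishes outright or reduces to a case already covered by the string/divisor/dilaton equations of Proposition \ref{descendentequations} together with the assumed vanishing $\langle \L_k(D)\rangle_{n,\beta}^{X,\textup{PT}}=0$. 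Concretely, writing $\ch_j(\gamma)D = \L_k$-applied plus correction, one gets
\[
\langle \L_k(\ch_j(\gamma)D)\rangle_{n,\beta} = \langle \ch_j(\gamma)\,\L_k(D)\rangle_{n,\beta} + \langle [\L_k,\ch_j(\gamma)](D)\rangle_{n,\beta},
\]
where $\ch_j(\gamma)$ is viewed as the multiplication operator; the first term needs the hypothesis only after one checks that multiplying by $\ch_j(\gamma)$ on the geometric side is compatible, which it is since both $\L_k(D)$ and $\ch_j(\gamma)$ have honest geometric realizations.

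First I would dispatch $\ch_0(\gamma)$ and $\ch_1(\gamma)$: geometrically $\ch_0(\gamma) = -\int_X\gamma$ is a scalar and $\ch_1(\gamma)=0$ by \eqref{collapsech}, so $\langle \L_k(\ch_0(\gamma)D)\rangle = -(\int_X\gamma)\langle \L_k(D)\rangle = 0$ provided the commutator terms $[\L_k,\ch_0(\gamma)]$ and $[\L_k,\ch_1(\gamma)]$ also collapse to zero after applying the rules for $\ch_0,\ch_1$ — this is exactly the ``$\L_{-1}$ is formal'' type of bookkeeping, and one must check it for general $k$ (here $R_k$ acting on $\ch_0(\gamma),\ch_1(\gamma)$ produces $\ch_k(\gamma),\ch_{k+1}(\gamma)$ with possibly vanishing coefficients, and the $S_k$, $T_k$ pieces need to be tracked). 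Next, for $\ch_2(1)D$: since $\ch_2(1)=0$ in $H^\ast(P_n(X,\beta))$ by Proposition \ref{descendentequations}(i) with $\gamma=1\in H^{0,0}$, one has $\langle \ch_2(1)\L_k(D)\rangle = 0$, so it remains to identify $[\L_k,\ch_2(1)]$; a direct computation (only $R_k$ contributes, since $T_k,S_k$ are multiplications that commute with multiplication) gives $[\L_k,\ch_2(1)] = R_k(\ch_2(1))$, which is a multiple of $\ch_{2+k}(1)$, and $\ch_{2+k}(1)$ for $k\geq -1$ can be related back to allowed descendents — actually for $\gamma=1$ this is the content of the vanishing discussion preceding Conjecture \ref{conj: vanishing}, except here $p=0$; I would instead use that $R_k(\ch_2(1))$ is proportional to $\ch_{k+2}(1)$ and note the proportionality constant $\prod_{j=0}^k(2+0-3+j) = \prod_{j=0}^k(j-1)$ vanishes for $k\geq 1$ (the factor $j=1$ gives $0$) and handle $k=-1,0$ separately using the known low-degree cases. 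For $\ch_2(\delta)D$ with $\delta\in H^{1,1}$: here Proposition \ref{descendentequations}(ii) gives $\ch_2(\delta)=\int_\beta\delta$, a scalar, so $\langle\ch_2(\delta)\L_k(D)\rangle = (\int_\beta\delta)\langle\L_k(D)\rangle=0$, and one computes $[\L_k,\ch_2(\delta)] = R_k(\ch_2(\delta)) + (\text{contribution from }S_k)$; the $R_k$ part is $\prod_{j=0}^k(1+1-3+j)\ch_{k+2}(\delta) = \prod_{j=0}^k(j-1)\ch_{k+2}(\delta)$, again killed for $k\geq 1$, with low $k$ done by hand. Finally $\ch_3(1)D$: Proposition \ref{descendentequations}(iii) gives $\ch_3(1) = n - d_\beta/2$, again a scalar on $P_n$, so the main term vanishes, and $[\L_k,\ch_3(1)] = R_k(\ch_3(1))$ is a multiple of $\ch_{k+3}(1)$ with coefficient $\prod_{j=0}^k(3-3+j) = \prod_{j=0}^k j = 0$ for all $k\geq 0$, and for $k=-1$ one uses the formal $\L_{-1}$ argument.

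The main obstacle I anticipate is the careful treatment of the operators $T_k$ and $S_k$ in the commutators. Since $T_k$ and $S_k$ are not derivations — $T_k$ is multiplication by a fixed quadratic expression, and $S_k$ is a sum of compositions ``multiply then apply a derivation'' — one cannot simply say $[\L_k, \text{mult by }\ch_j(\gamma)]$ reduces to $[R_k,\ch_j(\gamma)]$. For $T_k$ (pure multiplication) the commutator with another multiplication operator is genuinely zero, which is fine; but $S_k$ contains the derivations $R_{-1}[\gamma_i^L]$, so $[S_k, \ch_j(\gamma)\cdot]$ will produce terms of the form $R_{-1}[\gamma_i^L](\ch_j(\gamma))\cdot \ch_{k+1}(\gamma_i^R)\cdot$, i.e. $\ch_{j-1}(\gamma_i^L\gamma)\ch_{k+1}(\gamma_i^R)$. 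For $\gamma=1$ and $j=0,1,2,3$ these involve $\ch_{j-1}(\gamma_i^L)$ which for $j\leq 1$ pushes into negative or zero index (collapsing via \eqref{collapsech}), and for $\delta\in H^{1,1}$ the relevant Künneth terms $\gamma_i^L$ with $p_i^L=0$ satisfy $\gamma_i^L \delta$ living in $H^{1,q}$ — one must check these extra terms either vanish or are themselves of the permitted form so that the induction/hypothesis closes. I would organize this by first proving a clean lemma: for $\gamma\in H^{p,q}$ and any allowed $j$, $[\L_k, \ch_j(\gamma)] = R_k(\ch_j(\gamma)) + S_k'(\gamma,j)$ where $S_k'(\gamma,j)$ is an explicit sum of products of two descendents coming from $S_k$, and then verify case by case (the five listed multipliers) that the right-hand side, paired against $[P_n(X,\beta)]^{\textup{vir}}$, reduces to expressions already known to vanish — using Proposition \ref{descendentequations}, the low-$k$ cases, and the hypothesis. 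The bookkeeping is the real work; the conceptual content is just ``$\L_k$ almost commutes with trivial descendents''.
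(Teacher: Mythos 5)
Your plan is essentially the paper's own proof: expand $\L_k(\ch_j(\gamma)D)$ using that $R_k$ is a derivation and $T_k$ is pure multiplication, while $S_k$ contributes the extra terms $(k+1)!\sum_{p_i^L=0}\left(R_{-1}[\gamma_i^L]\ch_j(\gamma)\right)\ch_{k+1}(\gamma_i^R)D$, then kill the main term $\ch_j(\gamma)\L_k(D)$ via the hypothesis together with the collapsing rules and the string/divisor/dilaton equations, and dispose of the correction terms case by case exactly as you outline. Two points to watch when executing: for $\ch_1(\gamma)$ with $\gamma$ of Hodge type $(3,q)$ the $R_k$ term $(k+1)!\,\ch_{k+1}(\gamma)D$ does not vanish on its own (nor by a coefficient or Hodge-degree argument) but cancels exactly against the collapsed $S_k$ correction $-(k+1)!\sum_i\left(\int_X\gamma\gamma_i^L\right)\ch_{k+1}(\gamma_i^R)D$ via the K\"unneth identity for the diagonal, and the $R_k$ coefficient on $\ch_2(\delta)$ for $\delta\in H^{1,1}$ is $\prod_{j=0}^k j$ (already zero for all $k\geq 0$), not $\prod_{j=0}^k(j-1)$.
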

\begin{proof}
All of these are fairly easy verifications using the expressions for $\ch_0, \ch_1$ and the string, divisor and dilaton equations from proposition \ref{descendentequations}.
\begin{enumerate}
\item We have
\[\L_k (\ch_0(\gamma)D)=(R_k\ch_0(\gamma))D+\ch_0(\gamma)\L_k(D);\]
by our assumption on $D$ it follows that $\langle \ch_0(\gamma)\L_k(D)\rangle=0$. Moreover if $\gamma\in H^{p,q}(X)$
\[R_k\ch_0(\gamma)=\left(\prod_{n=0}^k (p+n-3)\right) \ch_k(\gamma);\]
if $p+k-3\geq 0$ then the product vanishes; otherwise $\langle\ch_k(\gamma)D \rangle^{X, \textup{PT}}_{n,\beta}=0$ because $p+k-3$ is the first Hodge degree of $\ch_k(\gamma)\in H^\ast(P_n(X, \beta))$. 
\item We have 
\begin{align*}\L_k (\ch_1(\gamma)D)=&(R_k\ch_1(\gamma))D+\ch_1(\gamma)\L_k(D)\\&+(k+1)!\sum_{p_i^L=0}\left(R_{-1}[\gamma_i^L]\ch_1(\gamma)\right)\ch_{k+1}(\gamma_i^R)D.
\end{align*}
The bracket of the middle term vanishes by definition and 
\[R_k\ch_1(\gamma)=\left(\prod_{n=0}^k (p+n-2)\right)\ch_{1+k}(\gamma).\]
If $p< 3$ the same argument as before shows that $\langle R_k\ch_1(\gamma)\rangle^{X, \textup{PT}}_{n,\beta}=0$, and also the last term vanishes since 
\[R_{-1}[\gamma_i^L]\ch_1(\gamma)=\ch_0(\gamma\gamma_i^L)=-\int_X \gamma\gamma_i^L=0.\]
If $p=3$ then the first term is $(k+1)!\ch_{k+1}(\gamma)D$ and the last term (after collapsing $\ch_0, \ch_1$) is
\begin{align*}-(k+1)!\sum_{p_i^L=0}\left(\int_{X}\gamma_i^L\gamma\right)\ch_{k+1}(\gamma_i^R)D=-(k+1)!\ch_{k+1}(\gamma)D.
\end{align*}
\item We have
\begin{align*}\L_k (\ch_2(1)D)=&(R_k\ch_2(1))D+\ch_2(1)\L_k(D)\\
&+(k+1)!\sum_{p_i^L=0}\ch_1(\gamma_i^L)\ch_{k+1}(\gamma_i^R)D.
\end{align*}
Applying the bracket to the last two terms gives 0 immediately since $\ch_2(1)=\ch_1(\gamma_i^L)=0$. Repeating the previous argument $R_k \ch_2(1)=0$.
\item We have 
\begin{align*}\L_k (\ch_2(\delta)D)=&(R_k\ch_2(\delta))D+\ch_2(\delta)\L_k(D)\\
&+(k+1)!\sum_{p_i^L=0}\ch_1(\delta\gamma_i^L)\ch_{k+1}(\gamma_i^R)D.
\end{align*}
The bracket of the first and the last terms vanish once again. By the divisor equation and by hypothesis:
\[\langle \L_k(\ch_2(\delta)D)\rangle_{n, \beta}^{X, \textup{PT}}=\langle \ch_2(\delta)\L_k D\rangle_{n, \beta}^{X, \textup{PT}}=\left(\int_{\beta}\delta\right) \langle \L_k D\rangle_{n, \beta}^{X, \textup{PT}}=0.\]
\item We have
\begin{align*}\L_k (\ch_3(1)D)=&(R_k\ch_3(1))D+\ch_3(1)\L_k(D)\\
&+(k+1)!\sum_{p_i^L=0}\ch_2(\gamma_i^L)\ch_{k+1}(\gamma_i^R)D.
\end{align*}
Once again the first and last term vanish; for the last term we use the $\gamma_i^L$-string equation, proposition \ref{descendentequations} $i)$. The bracket of the middle term vanishes by the assumption that $D$ satisfies the Virasoro constraint and by the dilaton equation.
\end{enumerate}
\end{proof}

Since the virtual dimension of $P_n(X, \beta)$ is $d_\beta=2$, we only have to check $\langle \L_k(D)\rangle^{X, \textup{PT}}_{n,\beta}=0$ for $2k+|D|=2d_\beta=4$. Moreover proposition \ref{reductionvirasoro} reduces us to the cases where $D$ is a product of descendents $\ch_i(\gamma)$ with $i\geq 2$ and $2i+|\gamma|-6>0$. We know already the result holds for $k=-1$ and $k=0$ so this leaves us with the following cases:
\begin{enumerate}
\item $k=2$ and $D=1$;
\item $k=1$ and $D=\ch_j(\gamma)$ for $(j, \gamma)\in \{(2, H^2), (3, H), (4, 1)\}$;
\item $k=1$ and $D=\ch_2(\gamma)\ch_2(\gamma')$ for $\gamma\in H^{1,2}(X), \gamma'\in H^{2,1}(X)$. 
\end{enumerate}

We'll check these cases by hand. We have $c_1=2H$,
\[\Delta_\ast c_1=\frac{2}{3}(H\otimes H^3+H^2\otimes H^2+H^3\otimes H)\]
 and $c_1c_2=8H^3=24\p$. After collapsing $\ch_0, \ch_1$ we have the following expressions for $\L_1, \L_2$:
\begin{align*}&\L_1=R_1-2\ch_3(H)+\frac{2}{3}\ch_2(H^3)R_{-1}\\
&\L_2=R_2-4\ch_4(H)+\frac{4}{3}\ch_2(H)\ch_2(H^3)-\frac{1}{3}\ch_2(H^2)\ch_2(H^2)-\frac{4}{3}\ch_2(H^3)+2\ch_2(H^3).
\end{align*}

Using that $\ch_2(H)=1$, by the divisor equation, case $(1)$ turns out to be equivalent to the identity
\[-4Z_{\textup{PT}}(\ch_4(H))-\frac{1}{3}Z_{\textup{PT}}(\ch_2(H^2)\ch_2(H^2))+2Z_{\textup{PT}}(\ch_2(H^3))=0\]
which is equivalent to
\[-4\frac{21q}{4}-\frac{45q}{3}+2\times 18q=0.\]

Case $(2)$ is equivalent to 
\[Z_{\textup{PT}}(\ch_{j+1}(\gamma))-Z_{\textup{PT}}(\ch_3(H)\ch_j(\gamma))+\frac{1}{3}Z_{\textup{PT}}(\ch_2(H^3)\ch_{j-1}(\gamma))=0.\]
These relations are checked for $(j, \gamma)=(1, H^3), (2, H^2), (3, H), (4, 1)$ using theorem \ref{fullPTcubic}.

Finally, case $(3)$ turns into
\[Z_{\textup{PT}}(\ch_2(\gamma)\ch_3(\gamma'))-Z_{\textup{PT}}(\ch_2(\gamma)\ch_2(\gamma')\ch_3(H))=0\] 
which also holds by the computations in theorem \ref{fullPTcubic}.

\section{Computation in $P_1(X, \beta)$}
\label{cubic3foldn1}

We are interested in computing the stable pairs theory $P_{n+1}(X, \beta)$ for $X$ when the curve class $\beta$ is the class of a line in $X$, that is, $\beta=\frac{1}{3}H^2$. The virtual dimension of $P_{n+1}(X, \beta)$ is given by
\[\int_\beta c_1(TX)=\int_X \frac{2}{3}H^3=2.\]

We can describe explicitly what is $P_{n+1}(X, \beta)$. Since the support of a stable pair in $P_{n+1}(X, \beta)$ is necessarily a line $L\in F(X)$, and in particular is Gorenstein, by the results in \cite[Appendix B]{stablepairsbps} it follows that stable pairs supported in $L$ are in correspondence with 0-dimensional subschemes of $L$ or, equivalently, effective divisors on $L$. 

Given such a divisor $D$, the Euler characteristic of the associated stable pair is $|D|+1-g(L)=|D|+1$, by Riemann-Roch for curves. Thus, $P_{n+1}(X, \beta)$ is a bundle over $F(X)$ with fiber $L^{[n]}$ over $L\in F(X)$. Here $L^{[n]}$ means the $n$-fold symmetric product of $L$, which parametrizes degree $n$ effective divisors on $L$. 

It follows from this description that $P_{n+1}(X, \beta)$ is a smooth projective variety of dimension $2+n$. When $n=0$ then $P_1(X, \beta)=F(X)$ and its actual dimension matches its virtual dimension 2. So in this case $[P_1(X, \beta)]^{\textup{vir}}$ is the fundamental class of $P_1(X, \beta)=F(X)$.

\subsection{Computing descendents, $n=0$}
\label{descendentsn0}
We will now compute all the descendents in $P_1(X, \beta)$. We introduce the following maps which we'll use during the computations:

\begin{center}
\begin{tikzcd}
\,&X& \PP^4\times F(X)\arrow[d, hookrightarrow, "j_2"]\\
\mathcal L \arrow[r,hookrightarrow, "\iota"]\arrow[ru, "\pi_X^\L"]\arrow[rd, "\pi_F^\L"']&
X\times F(X)\arrow[r,hookrightarrow, "j"]\arrow[u, "\pi_X"']\arrow[d, "\pi_F"]\arrow[ru,"j_1"]&
\PP^4\times \GG(1,4)\\
\,&F(X)&\,
\end{tikzcd}
\end{center}

The first observation is that the universal stable pair is $\FF=\iota_\ast \O_\L$. Indeed when we restrict $\O_{X\times F(X)}\to \iota_\ast \O_\L$ to $X\times \{L\}\subseteq X\times F(X)$ we get the corresponding stable pair $\O_X\to i_\ast \O_L$. Hence we can use Grothendieck-Riemann-Roch to compute the Chern character of $\FF$.

\[\ch(\FF)=\iota_\ast\left(\ch(\O_\L)\textup{td}(-\mathcal N_{\L/X\times F(X)})\right)=\iota_\ast \textup{td}(-\mathcal N_{\L/X\times F(X)}).\]
We relate this normal bundle to the normal bundles of $\L$ and $X\times F(X)$ inside $\PP^4\times \GG(1, 4)$ using the exact sequence
\[0\rightarrow \N_{\L/X\times F(X)}\rightarrow \N_{\L/\PP^4\times \GG(1,4)} \rightarrow \iota^\ast\N_{X\times F(X)/\PP^4\times \GG(1,4)} \rightarrow 0.\]

Moreover, the normal bundles inside $\PP^4\times \GG(1, 4)$ can be identified by writing $\L$ and $X\times F(X)$ as zero locus of (dimensionally transverse) sections of bundles. 

Clearly $X\times F(X)$ is the zero locus of the section $F\oplus s_F$ of the rank-4 bundle $\O_{\PP^4}(3)\oplus \Sym^3(\S^\ast)$. Regarding $\L$, we can write $\L$ as a dimensionally transverse intersection $\widetilde \L\cap (\PP^4\times F(X))$ where $\widetilde \L=\{(x, L)\in \PP^4\times \GG(1,4): x\in L\}$. Now $\widetilde \L$ can be described as the zero locus of a section of $\O_{\PP^4}(1)\boxtimes \Q$: given $(x, L)\in \PP^4\times \GG(1,4)$, we have a homomorphism
\[\O_{\PP^4}(-1)_x\to \CC^5\to \Q_L\]
determining a section of $\mathcal Hom(\O_{\PP^4}(-1), \Q)\cong \O_{\PP^4}(1)\boxtimes \Q$ whose zero locus is $\widetilde \L$. Thus we compute 
\begin{align}\ch(\FF)&=\iota_\ast \textup{td}(-\mathcal N_{\L/X\times F(X)})=\iota_\ast \frac{\iota^\ast j^\ast\textup{td}\left(\O(3)\oplus \Sym^3(\S^\ast)\right)}{\iota^\ast j^\ast\textup{td}\left(\O(1)\boxtimes \Q\oplus \Sym^3(S^\ast)\right)}\\
&=[\L]j^\ast \frac{\textup{td}\left(\O(3)\right)}{\textup{td}\left(\O(1)\boxtimes \Q\right)}.
\end{align}
where we used the push-pull formula for $\iota_\ast\iota^\ast \alpha=(\iota_\ast 1)\alpha=[\L]\alpha$ and wrote $[\L]$ for the class in $H_6(X\times F(X))\cong H^4(X\times F(X))$. Now
\[\textup{td}(\O(3))=\frac{3H}{1-e^{-3H}}\]
and 
$\textup{td}\left(\O(1)\boxtimes \Q\right)$
can be computed formally with the splitting principle: we get



\begin{align*}j^\ast \frac{\textup{td}\left(\O(3)\right)}{\textup{td}\left(\O(1)\boxtimes \Q\right)}=&1 - \frac{1}{2}c_1 + \frac{1}{6}c_1^2 - \frac{1}{12}c_2 + \frac{1}{12}H c_1 - \frac{1}{24}H c_1^2  + \frac{1}{4}H^2-\frac{1}{8}H^2c_1 \\
&+ \frac{31}{720} H^2c_1^2- \frac{1}{60}H^2c_2  + \frac{7 }{360}H^3c_1 - \frac{ 7 }{720} H^3c_1^2
\end{align*}
 
Computing $[\L]\in H^4(X\times F(X))$ is not straightforward since $\L$ is the zero locus of $j^\ast \O(1)\boxtimes \Q$ but $\L$ has codimension 2 inside $X\times F(X)$ while $j^\ast \O(1)\boxtimes \Q$ has rank 3. However, we can compute the pushforward of $[\L]$ to $\PP^4\times F(X)$ as 
\[(j_1)_\ast[\L]=j_2^\ast c_3(\O(1)\boxtimes \Q)\in H^{6}(\PP^4\times F(X)).\] The pushforward $(j_1)_\ast$ kills the component of $H^3(X)\otimes H^1(F(X))$ in the Künneth decomposition of $H^4(X\times F(X))$ but we can recover the rest, finding
\[[\L]=\frac{1}{3}H^2 - \frac{1}{3}H c_1 + \frac{1}{3}(c_1^2 - c_2)+A\]
where $A\in H^3(X)\otimes H^1(F(X))$.

This is enough to compute all the even descendents, which we now list:

\begin{align}
&\ch_3(1)=0,\quad
\ch_2(H)=1\\
&\ch_4(1)=\frac{1}{6}c_1,\quad
\ch_3(H)=\frac{1}{2}c_1,\quad
\ch_2(H^2)=-c_1\\
&\ch_5(1)=\frac{1}{12}c_1^2,\quad
\ch_4(H)=-\frac{1}{12}c_1^2+\frac{1}{3}c_2\\
&\ch_3(H^2)=-\frac{1}{2}c_1^2,\quad
\ch_2(H^3)=c_1^2-c_2
\end{align}

Moreover the push-pull formula gives
\[\ch_2(\gamma)=(\pi_F)_\ast\iota_\ast\iota^\ast\pi_X^\ast \gamma=(\pi_F^\L)_\ast(\pi_X^\L)^\ast\gamma=\varphi(\gamma).\]
Finally, for $\gamma\in H^3(X)$
\[\ch_3(\gamma)=(\pi_F)_\ast\left(A\frac 12 c_1\pi_X^\ast\gamma\right)=\frac{1}{2}c_1\varphi(\gamma).\]

\subsection{Computing the invariants}
All the invariants that only have descendents of even classes are straightforward to compute using the integrals \eqref{integralsfano} of $c_1^2$ and $c_2$ in $F(X)$.

\begin{align}
&\langle \ch_4(1) \ch_4(1) \rangle_1=\frac{5}{4}, \quad
\langle \ch_4(1) \ch_3(H) \rangle_1=\frac{15}{4}\\
&\langle \ch_4(1) \ch_2(H^2) \rangle_1=-\frac{15}{2}, \quad
\langle \ch_3(H) \ch_3(H) \rangle_1=\frac{45}{4}\\
&\langle \ch_3(H) \ch_2(H^2) \rangle_1=-\frac{45}{2}, \quad
\langle \ch_2(H^2) \ch_2(H^2) \rangle_1=45\\
&\langle \ch_5(1)\rangle_1=\frac{15}{4}, \quad
\langle \ch_4(H)\rangle_1=\frac{21}{4}, \quad
\langle \ch_3(H^2)\rangle_1=-\frac{45}{2}, \quad
\langle \ch_2(H^3)\rangle_1=18
\end{align}

The invariants with two odd descendents are computed using the identity 
\[\int_{F(X)}\varphi(\gamma)\varphi( \gamma')c_1=6\int_X \gamma \gamma'.\] Note that this is enough to compute everything because the descendents of degree 2 are all proportional to $c_1$ and $\ch_3(\gamma)=\frac{1}{2}c_1\varphi(\gamma)$. Let $\gamma, \gamma'\in H^3(X)$. 

\begin{align}
&\langle \ch_2(\gamma)\ch_2(\gamma')\ch_4(1)\rangle_1=\int_X \gamma\gamma',\quad
\langle \ch_2(\gamma)\ch_2(\gamma')\ch_3(H)\rangle_1=3\int_X \gamma\gamma'\\
&\langle \ch_2(\gamma)\ch_2(\gamma')\ch_2(H^2)\rangle_1=-6\int_X \gamma\gamma',\quad
\langle \ch_2(\gamma)\ch_3(\gamma')\rangle_1=3\int_X \gamma\gamma'
\end{align}

Finally, we're missing the case of 4 odd descendents. The required integral is computed in \cite[Theorem 6.7, iv)]{quantumcohprimitivehu}:

\begin{align}\langle \ch_2(\gamma_1)&\ch_2(\gamma_2)\ch_2(\gamma_3)\ch_2(\gamma_4)\rangle=\int_{F(X)}\varphi(\gamma_1)\varphi(\gamma_2)\varphi(\gamma_3)\varphi(\gamma_4)\nonumber\\
&=\left(\int_X \gamma_1\gamma_2\right)\left(\int_X \gamma_3\gamma_4\right)+\left(\int_X \gamma_1\gamma_4\right)\left(\int_X \gamma_2\gamma_3\right)\nonumber \\
&\quad+\left(\int_X \gamma_1\gamma_3\right)\left(\int_X \gamma_4\gamma_2\right)
\end{align}

\section{Computation in $P_{n+1}(X, \beta)$, $n>0$}
\label{cubic3foldngeneral}

We now study descendents in the moduli space $P_{n+1}(X, \beta)$ for $n>0$.  

We introduced previously $\L$ as the universal line over $F(X)$. Let also $\L^{[n]}$ denote the $n$-fold symmetric product of $\L$ over $F(X)$, that is, 
\[\L^{[n]}=\left(\L\times_{F(X)}\ldots \times_{F(X)} \L\right)\big /\Sigma_n\]
where $\Sigma_n$ is the permutation group acting by permuting the factors of the product. Then $P_{n+1}(X, \beta)=\L^{[n]}$. Recall that\footnote{For us, a projective bundle $\PP E$ parametrizes 1-dimensional subspaces of $E$ (and not $1$-dimensional quotients).} $\L=\PP_{F(X)}(\S)$ where we still denote by $\S$ the restriction of the tautological bundle $\S$ on the Grassmanian $\GG(1,4)$ to $F(X)$. Hence
\[\L^{[n]}=\PP_{F(X)}(\Sym^n \S).\]
As a set:
\[\L^{[n]}=\{(L, D): L\in F(X), D\in \textup{Div}_{\textup{eff}}(L), |D|=n\}.\]

As a projective bundle, $\L^{[n]}$ carries a tautological line bundle $\O_{\L^{[n]}}(-1)$ (whose fiber over $L$ is identified with the line inside $\Sym^n \S$ corresponding to $L$). We denote by $\zeta_n$ the Chern class
\[\zeta_n=c_1\left(\O_{\L^{[n]}}(1)\right)\in H^2\left(\L^{[n]}\right).\]

By the projective bundle theorem the cohomology of $\L^{[n]}$ is
\[H^\ast(\L^{[n]})=H^\ast(F(X))[\zeta_n]/\left(\zeta_n^{n+1}+\zeta_{n}^n c_1(\Sym^n\S)+\ldots+c_{n+1}(\Sym^n\S)\right).\]

\subsection{The universal divisor}

There is a universal (effective) divisor $\D=\D_n$ in the fiber product $\L\times_{F(X)}\L^{[n]}$ such that its restriction to a fiber $\L\times_{F(X)} \{(L, D)\}\cong L$ is $D$. We can identify the class of $\D$ in $H^2\left(\L\times_{F(X)}\L^{[n]}; \ZZ\right)$. We will use, now and for the rest of the section, the maps $p, q, \pi_1, \pi_n$ which are the obvious projections in the pullback diagram:

\begin{center}
\begin{tikzcd}
\D\arrow[r, hookrightarrow]&\L\times_{F(X)}\L^{[n]}\arrow["q",r]\arrow["p", d]&
\L \arrow["\pi_1", d]\\
&
\L^{[n]}\arrow["\pi_n", r]&
F(X)
\end{tikzcd}
\end{center}

We still denote by $\zeta_n, \zeta_1, c_1$ the pull-backs of the original classes to $\L\times_{F(X)}\L^{[n]}$ via $p$, $q$ and $\pi_X^\L \circ q$, respectively.

\begin{proposition}
\label{universaldivisor}
We have
\[[\D]=\zeta_n+n\zeta_1+nc_1\]
in $H^2\left(\L\times_{F(X)}\L^{[n]}; \ZZ\right)$.
\end{proposition}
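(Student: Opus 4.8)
## Proof proposal

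The plan is to compute the class of the universal divisor $\D \subseteq \L\times_{F(X)}\L^{[n]}$ by exploiting the fact that it restricts, fiber by fiber over $F(X)$, to the universal divisor on the symmetric product of a $\PP^1$-bundle. First I would set up the relevant geometry: over a point $L \in F(X)$, the line $\L_L \cong \PP^1$ sits inside the fiber $\L\times_{F(X)}\{L\}$, and $\L^{[n]}_L \cong \PP^n = \PP(\Sym^n \S_L)$ is its symmetric product; the divisor $\D$ restricts over $L$ to the incidence divisor in $\PP^1 \times \PP^n$ which has class $n\,h_1 + h_n$ where $h_1, h_n$ are the hyperplane classes. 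This already pins down $[\D]$ up to terms pulled back from $H^2(F(X))$, i.e.\ up to a class of the form $\pi^\ast(\xi)$ for some $\xi \in H^2(F(X))$ — where $\pi$ denotes the composite projection to $F(X)$. So the real content is to identify that correction term as $n c_1$.

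The key step is therefore to pin down the $H^2(F(X))$-component. I would do this by pushing forward along $q$ (the projection to $\L$) and using the projective bundle structure. Concretely: $p_\ast[\D]$ should be computable directly, since $\D \to \L^{[n]}$ is generically finite of degree $n$ onto... no — rather, I would push $[\D]$ forward along $q\colon \L\times_{F(X)}\L^{[n]} \to \L$. The fiber of $q$ over a point $x \in \L_L$ is the hyperplane $\{D : x \in D\} \subseteq \L^{[n]}_L \cong \PP^n$, so $q_\ast[\D] = $ (class of a section of $\pi_1$) up to lower-order; more usefully, $q|_\D \colon \D \to \L$ is a $\PP^{n-1}$-bundle, namely $\PP_\L(\Sym^{n-1}\S')$ for the appropriate twist, so $q_\ast$ applied to powers of $[\D]$ is controlled. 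Alternatively, and more cleanly, I would use that $\O(\D)$ restricted to each $\PP^1$-fiber of $q$ (i.e.\ to $\L_L \times \{D\}$) has degree $n$, which forces the $\zeta_1$-coefficient to be $n$; restricted to each $\PP^n$-fiber of $p$ it has degree $1$, forcing the $\zeta_n$-coefficient to be $1$; and then the remaining ambiguity $\xi \in H^2(F(X))$ is fixed by computing $q_\ast(\O(\D))$ or equivalently the pushforward $p_\ast$ of $[\D]$ along $p$. The natural identity to use is that $\O_{\L\times_{F(X)}\L^{[n]}}(\D)$ is the line bundle whose space of sections (pushed to $F(X)$) recovers $\Sym^n\S \otimes (\det \S)^{\otimes ?}$, and comparing first Chern classes gives the coefficient of $c_1$.

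The cleanest route, which I expect to be the main technical step, is the following: the universal divisor $\D$ is cut out by a canonical section of $q^\ast \O_\L(n) \otimes p^\ast \O_{\L^{[n]}}(1)$ twisted by a line bundle pulled back from $F(X)$. To see this, recall $\L = \PP_{F(X)}(\S)$ so $\pi_{1\ast}\O_\L(n) = \Sym^n\S^\vee$ (with the paper's convention that $\PP E$ parametrizes sub-line-bundles, one must be careful with duals and with $\det$ twists), and $\L^{[n]} = \PP_{F(X)}(\Sym^n\S)$. The tautological inclusion $\O_{\L^{[n]}}(-1) \hookrightarrow \pi_n^\ast \Sym^n\S$ together with the evaluation/restriction map $\pi_n^\ast\Sym^n\S \to \O_\L(n)$-valued pairing produces exactly a section of $q^\ast\O_\L(n)\otimes p^\ast \O_{\L^{[n]}}(1)\otimes \pi^\ast(\det\S)^{\otimes c}$ vanishing on $\D$, for an explicit constant $c$; chasing the $\PP^1$-bundle conventions gives $c$ such that $c_1$ of the whole bundle is $n\zeta_1 + \zeta_n + n c_1$ (using $c_1 = c_1(\S)$ as in the paper, noting $c_1(\S^\vee) = -c_1$). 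The main obstacle is bookkeeping the sign and $\det$-twist conventions for $\PP E$ parametrizing sub-lines versus quotient-lines — getting the coefficient $n c_1$ rather than $-n c_1$ or $(n-1)c_1$ requires care — but once the section is written down, $[\D] = c_1$ of that line bundle and the formula follows immediately. As a sanity check I would verify the $n=1$ case, where $\L^{[1]} = \L$, $\zeta_1 = \zeta_n$, and $[\D]$ is the class of the diagonal in $\L\times_{F(X)}\L$; the formula predicts $[\D] = 2\zeta_1 + c_1$, which should match the standard computation of the relative diagonal in a $\PP^1$-bundle (self-intersection of the diagonal being $-c_1(T_{\L/F(X)})$, and $c_1(T_{\PP^1})$ contributing the $2$).
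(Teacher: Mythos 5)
Your proposal follows essentially the same route as the paper: the paper also realizes $\D$ as the zero locus of a canonical section of a line bundle built from the two tautological sub-bundles, namely $\mathcal{H}om\bigl(q^\ast\O_\L(-1)^{\otimes n}\otimes p^\ast\O_{\L^{[n]}}(-1),\,(\Lambda^2\S)^{\otimes n}\bigr)$, defined by the fiberwise wedge pairing $a_1\otimes\cdots\otimes a_n\otimes b_1\cdots b_n\mapsto (a_1\wedge b_1)\otimes\cdots\otimes(a_n\wedge b_n)$, and then reads off $[\D]$ as its first Chern class. Your evaluation-map construction is the same section in different clothing (the quotient $\S/\O_\L(-1)\cong\det\S\otimes\O_\L(1)$ makes your undetermined twist exactly $(\det\S)^{\otimes n}$, i.e.\ $c=n$), and your $n=1$ diagonal check is consistent with this.
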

\begin{proof}
The result follows by identifying $\D$ as the vanishing locus of a (canonical) section $s$ of the line bundle
\[\mathcal{H}om\left(q^\ast \O_{\L}(-1)^{\otimes n}\otimes p^\ast \O_{\L^{[n]}}(-1), (\Lambda^2 \S)^{\otimes n}\right).\] 
Indeed this section can be described as follows: let $(L, x, D)\in \L\times_{F(X)} \L^{[n]}$ and consider the embeddings $\O_{\L}(-1)\hookrightarrow \S$ and $\O_{\L^{[n]}}(-1)\hookrightarrow \textup{Sym}^n \S$. Let $a_1\otimes \ldots\otimes a_n$ be in the fiber of $\O_\L(-1)^{\otimes n}$ over $(L, x)$ and $b_1\ldots b_n$ be in the fiber of $\O_{\L^{[n]}}(-1)$ over $(L, D)$, with $a_i, b_i\in \S_L$. Then the value of the section at $(L, x, D)$ is the morphism
\[a_1\otimes \ldots\otimes a_n\otimes b_1\ldots b_n\mapsto (a_1\wedge b_1)\otimes \ldots \otimes (a_n\wedge b_n)\in (\Lambda^2\S_L)^{\otimes n}.\]
Now this section vanishes at $(L, x, D)$ if and only if  $b_i$ is proportional to $a_i$ for some $i=1, \ldots, n$, that is, $b_i\in \O_\L(-1)_{(L,x)}$ for some $i$. If we write $D=\sum_{i=1}^n x_i$ then the fiber of $\O_{\L^{[n]}}(-1)$ over $(L, D)$ is
\[\O_{\L^{[n]}}(-1)_{(L,D)}=\left\{b_1\ldots b_n: b_i\in \O_{\L}(-1)_{(L, x_i)}\right\}\subseteq \Sym^n \S_L\]
But then the condition that $b_i\in \O_\L(-1)_{(L,x)}$ for some $i$ is equivalent to $x=x_i$ for some $i$, that is, $x\in D$ which is precisely the defining condition of $\D$. 

\end{proof}

\subsection{Obstruction bundle and virtual fundamental class}

We can identify the obstruction bundle of $P_{n+1}(X, \beta)$ as follows. By \cite[Proposition 4.6]{PTfoundations} the obstruction bundle has fiber over $(L, D)\in \L^{[n]}$ given by $H^0(\O_D(D)\otimes K_X)^\vee$ (note that $H^1(\N_{L/X})=0$ for any $L$ by \cite[Lemma 1.9]{cubic}). 

In other words,
\[\textup{Obs}=p_\ast(\O_{\D}(\D)\otimes K_X)^\vee=Rp_\ast(\O_{\D}(\D)\otimes K_X)^\vee.\]

We now compute $\textup{Obs}$ in the $K$-theory of $P_{n+1}(X, \beta)=\L^{[n]}$.
 We have $K_X=\O_X(-2H)$ in $X$, so the pullback of $K_X$ to $\L \times_{F(X)}\L^{[n]}$ is $\O(-2\zeta_1)$. We also have 
 \[\O_\D(\D)=\O(\D)-\O=\O(\zeta_n+n\zeta_1+nc_1)-\O\]
in $K(\L\times_{F(X)}\L^{[n]})$, by proposition \ref{universaldivisor}. Thus 
\begin{align*}\textup{Obs}^\vee&=Rp_\ast\left(\O(\zeta_n+(n-2)\zeta_1+nc_1)-\O(-2\zeta_1)\right)\\
&=\O(\zeta_n+nc_1)\otimes Rp_\ast \O((n-2)\zeta_1)-Rp_\ast\O(-2\zeta_1)\end{align*}
in $K(\L^{[n]})$.
Since $\O((n-2)\zeta_1)$ is the pullback of $\O_{\L}(n-2)$ via $q$ we have
\[Rp_\ast \O((n-2)\zeta_1)=\pi_n^\ast R\pi_{1\ast}\O_{\L}(n-2)=\pi_n^\ast \Sym^{n-2}\S^\vee\]
by \cite[Lemma 30.8.4]{stacks}.

Similarly, 
\[Rp_\ast \O(-2\zeta_1)=-\pi_n^\ast \Lambda^2 \S.\]

\begin{proposition}
For $n>0$ the obstruction bundle of $P_{n+1}(X, \beta)=\L^{[n]}$ is
\begin{equation}\left(\O(\zeta_n+nc_1)\otimes \pi_n^\ast\Sym^{n-2}(\S^\vee)\oplus \pi_n^\ast \Lambda^2 \S\right)^\vee\in K(\L^{[n]}).
\end{equation}

In particular,
\begin{equation}[P_{n+1}(X, \beta)]^\textup{vir}=(-1)^n c_1\left(\zeta_n^{n-1}+\frac{(n+2)(n-1)}{2}\zeta_n^{n-2}c_1\right).
\end{equation}
\end{proposition}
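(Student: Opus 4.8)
To prove the first assertion I would assemble the $K$-theoretic computation already performed above. Starting from $\textup{Obs}^\vee=Rp_\ast(\O_\D(\D)\otimes K_X)$, I would twist the structure-sheaf sequence $0\to\O\to\O(\D)\to\O_\D(\D)\to 0$ by $\pi^\ast K_X=\O(-2\zeta_1)$ to get $\O_\D(\D)\otimes K_X=\O(\D-2\zeta_1)-\O(-2\zeta_1)$ in $K$-theory, pull the line bundle $\O(\zeta_n+nc_1)$ out of $Rp_\ast$ by the projection formula, and apply cohomology-and-base-change along the $\PP^1$-bundle $q$ together with $R\pi_{1\ast}\O_\L(n-2)=\Sym^{n-2}\S^\vee$ and $R\pi_{1\ast}\O_\L(-2)=-\Lambda^2\S$ (both already recorded above; with the convention $\Sym^{-1}\S^\vee:=0$ when $n=1$). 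This produces the displayed class. I would also observe that $Rp_\ast(\O_\D(\D)\otimes K_X)$ is concentrated in degree $0$: on each fibre $L\cong\PP^1$ of $p$ the sheaf $\O_D(D)\otimes K_X|_L$ is torsion, so $H^1$ vanishes and $H^0$ has constant rank $n$; hence $\textup{Obs}$ is a genuine vector bundle, of rank $n=\dim\L^{[n]}-d_\beta$.

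For the virtual class, the plan is to use the standard fact that a smooth projective moduli space carrying a perfect obstruction theory with locally free obstruction bundle has virtual fundamental class equal to the top Chern class of that bundle capped with the fundamental class. Since $\textup{Obs}$ has rank $n$, this reduces the problem to computing $c_n(\textup{Obs})$ in $H^\ast(\L^{[n]})$. I would write $\textup{Obs}=W^\vee\oplus(\pi_n^\ast\Lambda^2\S)^\vee$ in $K$-theory with $W=\O(\zeta_n+nc_1)\otimes\pi_n^\ast\Sym^{n-2}\S^\vee$ of rank $n-1$, so that $c_n(W^\vee)=0$ and the Whitney formula gives $c_n(\textup{Obs})=c_{n-1}(W^\vee)\cdot c_1\big((\pi_n^\ast\Lambda^2\S)^\vee\big)=-c_1\cdot c_{n-1}(W^\vee)$, using $c_1(\Lambda^2\S)=c_1(\S)=c_1$. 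By the splitting principle, with $x,y$ the Chern roots of $\pi_n^\ast\S$ (so $x+y=c_1$ and $xy=c_2$), the Chern roots of $W^\vee$ are $r_i=ix+(n-2-i)y-\zeta_n-nc_1=(i-n)x-(i+2)y-\zeta_n$ for $i=0,\dots,n-2$, whence $c_{n-1}(W^\vee)=\prod_{i=0}^{n-2}r_i=\sum_{k=0}^{n-1}(-\zeta_n)^{n-1-k}e_k(s_0,\dots,s_{n-2})$ with $s_i=(i-n)x-(i+2)y$.

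The key simplification is that $c_1$ and $c_2$ are pulled back from the surface $F(X)$, so $\pi_n^\ast(c_1^ac_2^b)=0$ in $H^\ast(\L^{[n]})$ as soon as $a+2b\geq 3$. Since $e_k(s_0,\dots,s_{n-2})$ is homogeneous of degree $k$ in $x,y$, hence a polynomial in $c_1,c_2$ all of whose monomials $c_1^ac_2^b$ satisfy $a+2b=k$, multiplying $c_{n-1}(W^\vee)$ by $c_1$ kills every term with $k\geq 2$, leaving only the coefficients of $\zeta_n^{n-1}$ (the $k=0$ term, equal to $(-1)^{n-1}$) and of $\zeta_n^{n-2}c_1$ (coming from the $c_1$-linear part of $(-1)^{n-2}\sum_{i=0}^{n-2}s_i$). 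Using $\sum_{i=0}^{n-2}(i-n)=\sum_{i=0}^{n-2}\big(-(i+2)\big)=-\tfrac{(n-1)(n+2)}{2}$, so that $\sum_{i=0}^{n-2}s_i=-\tfrac{(n-1)(n+2)}{2}c_1$, I would conclude $c_n(\textup{Obs})=-c_1\big((-1)^{n-1}\zeta_n^{n-1}+(-1)^{n-1}\tfrac{(n+2)(n-1)}{2}\zeta_n^{n-2}c_1\big)=(-1)^nc_1\big(\zeta_n^{n-1}+\tfrac{(n+2)(n-1)}{2}\zeta_n^{n-2}c_1\big)$; the case $n=1$, where $W=0$ and $\textup{Obs}=(\pi_1^\ast\Lambda^2\S)^\vee$, is the degenerate instance in which the second summand drops out.

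The only delicate point --- and what I would flag as the ``hard part'', modest as it is --- is this final bookkeeping: one must be certain that every monomial of $c_{n-1}(W^\vee)$ other than the two identified ones is annihilated by the factor $c_1$, for dimension reasons on the surface $F(X)$. The weighted-degree argument above makes this transparent, and nothing else in the proof is more than a routine computation.
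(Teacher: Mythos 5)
Your proposal is correct and follows essentially the same route as the paper: the obstruction bundle is read off from the $K$-theoretic pushforward computation already carried out, and the virtual class is the top Chern class $c_n(\textup{Obs})$, evaluated via the splitting principle together with the vanishing of degree-$\geq 3$ monomials in $c_1,c_2$ pulled back from the surface $F(X)$. Your extra observations (that $Rp_\ast$ is concentrated in degree $0$ so $\textup{Obs}$ is genuinely locally free of rank $n$, and the degenerate $n=1$ case) are welcome refinements of points the paper leaves implicit, and your bookkeeping with elementary symmetric polynomials reproduces the paper's coefficient $\frac{(n+2)(n-1)}{2}$ exactly.
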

\begin{proof}
The identification of the obstruction bundle was done before. Letting $\alpha, \beta$ be the Chern roots of $\S$ it follows that
\begin{equation*}[P_{n+1}(X, \beta)]^\textup{vir}=c_n(\textup{Obs})=(-1)^nc_1\prod_{j=0}^{n-2}\left(\zeta_n+nc_1-j\alpha-(n-2-j)\beta\right).
\end{equation*}
The result is obtained from using $\alpha+\beta=c_1$ and noting that homogeneous polynomials in $c_1, \alpha, \beta$ of degree at least 3 vanish because $H^{>4}(F(X))=0$.
\end{proof}

\subsection{Descendents}

The universal stable pair $\FF$ is given by $\iota_\ast \O(\D)$ where \[\iota: \L\underset{F(X)}{\times} \L^{[n]}\hookrightarrow X\times \L^{[n]}\]
is the canonical inclusion of the universal curve. By Grothendieck-Riemann-Roch and proposition \ref{universaldivisor} it follows that
\begin{equation}\ch(\FF)=\iota_\ast \left(e^{\zeta_n+n\zeta_1+nc_1}\td(-\N_\iota)\right)=e^{\zeta_n+nH+nc_1}\iota_\ast \td(-\N_\iota).\label{descendentsequation}
\end{equation}
Writing $\ch(\gamma)$ for the sum of the descendents $\sum_{k}\ch_k(\gamma)$ and $\ch^0(\gamma)$ for the same sum when $n=0$ (the descendents calculated in \ref{descendentsn0}) we get the following expression for the descendents in terms of descentents with $n=0$:
\[\ch(\gamma)=e^{\zeta_n+nc_1}\ch^0(e^{nH}\gamma).\]

Combining this with \ref{descendentsn0} gives a full computation of the descendents. By the expression of the virtual fundamental class, for $n>0$ it's enough to compute the descendents modulo the ideal
\[\R=H^{>4}(\L^{[n]})+H^{>2}(F(X))[\zeta_n]\subseteq H^\ast(\L^{[n]}).\]
All the following equalities should be understood modulo $\R$:

\begin{align}
&\ch_3(1)=n,\quad
\ch_2(H)=1\\
&\ch_4(1)=c_1\left(\frac{1}{6}+\frac{n}{2}+\frac{n^2}{2}\right)+n\zeta_n,\quad
\ch_3(H)=\frac{1}{2}c_1+\zeta_n,\quad
\ch_2(H^2)=-c_1\\
&\ch_5(1)=c_1 \zeta_n \left(\frac{1}{6}+\frac{n}{2}+\frac{n^2}{2}\right)+n\zeta_n^2,\quad
\ch_4(H)=\frac{1}{2}c_1\zeta_n+\frac{1}{2}\zeta_n^2\\
&\ch_3(H^2)=-c_1\zeta_n,\quad
\ch_2(H^3)=0\\
&\ch_2(\gamma)=\varphi(\gamma),\quad \ch_3(\gamma)=\zeta_n\varphi(\gamma).
\end{align}
Here $\gamma\in H^3(X)$. 

\subsection{Descendent invariants}

We're now in conditions to compute all the descendent invariants. To illustrate the type of expressions arising we'll compute $Z_{\textup{PT}}(\ch_5(1))$. All the remaining equations in theorem \ref{fullPTcubic} are calculated in the same way. We have for $n>0$

\begin{align*}
\langle\ch_5(1)\rangle_{n+1, \beta}^X&=\int_{[P_{n+1}(X, \beta)]^\textup{vir}}\ch_5(1)\\
&=\int_{\L^{[n]}}(-1)^nc_1\left(\zeta_n^{n-1}+\frac{(n+2)(n-1)}{2}c_1\zeta_n^{n-2}\right)\left(c_1 \zeta_n \left(\frac{1}{6}+\frac{n}{2}+\frac{n^2}{2}\right)+n\zeta_n^2\right)\\
&=(-1)^n\frac{45}{2}(3+n^2).
\end{align*}
The last computation is done using that
\[\int_{\L^{[n]}}c_1^2\zeta_n^n=45 \textup{ and }\int_{\L^{[n]}}c_1\zeta_n^{n+1}=-45\frac{n(n+1)}{2}.\]
The first integral is clear by \eqref{integralsfano} and the second is reduced to the first using
\[\zeta_n^{n+1}=-c_1(\Sym^n\S)\zeta_n^n-c_2(\Sym^n\S)\zeta_n^{n-1}\]
and \[c_1(\Sym^n\S)=\frac{n(n+1)}{2}c_1.\]

Thus
\[Z_{\textup{PT}}(\ch_5(1))=\frac{15q}{4}+\sum_{n=1}^\infty (-1)^n\frac{45}{2}(3+n^2)q^{n+1}=\frac{15(q-5q^2+5q^3-q^4)}{4(1+q)^3}.\]

\bibliographystyle{plain}
\bibliography{bibliographycubicPT}

\begin{thebibliography}{10}

\bibitem{stacks}
{The Stacks Project}.
\newblock \url{https://stacks.math.columbia.edu/}.
\newblock Accessed: 27/11/2019.

\bibitem{cox}
David~A. Cox, John~B. Little, and Henry~K. Schenck.
\newblock {\em Toric varieties}, volume 124 of {\em Graduate Studies in
  Mathematics}.
\newblock American Mathematical Society, Providence, RI, 2011.

\bibitem{eisenbudharris}
David Eisenbud and Joe Harris.
\newblock {\em 3264 and all that---a second course in algebraic geometry}.
\newblock Cambridge University Press, Cambridge, 2016.

\bibitem{egl}
Geir Ellingsrud, Lothar G\"{o}ttsche, and Manfred Lehn.
\newblock On the cobordism class of the {H}ilbert scheme of a surface.
\newblock {\em J. Algebraic Geom.}, 10(1):81--100, 2001.

\bibitem{getzler}
E.~Getzler.
\newblock The {V}irasoro conjecture for {G}romov-{W}itten invariants.
\newblock In {\em Algebraic geometry: {H}irzebruch 70 ({W}arsaw, 1998)}, volume
  241 of {\em Contemp. Math.}, pages 147--176. Amer. Math. Soc., Providence,
  RI, 1999.

\bibitem{givental}
Alexander~B. Givental.
\newblock Gromov-{W}itten invariants and quantization of quadratic
  {H}amiltonians.
\newblock volume~1, pages 551--568, 645. 2001.
\newblock Dedicated to the memory of I. G. Petrovskii on the occasion of his
  100th anniversary.

\bibitem{quantumcohprimitivehu}
Xiaowen {Hu}.
\newblock {Computing the quantum cohomology of primitive classes}.
\newblock {\em arXiv e-prints}, page arXiv:1501.03683, January 2015.

\bibitem{cubic}
Daniel Huybrechts.
\newblock The geometry of cubic hypersurfaces.
\newblock \textit{Available at
  \url{http://www.math.uni-bonn.de/people/huybrech/Notes.pdf}}, March 2019.

\bibitem{lqw}
Wei-ping Li, Zhenbo Qin, and Weiqiang Wang.
\newblock Vertex algebras and the cohomology ring structure of {H}ilbert
  schemes of points on surfaces.
\newblock {\em Math. Ann.}, 324(1):105--133, 2002.

\bibitem{mnop1}
D.~Maulik, N.~Nekrasov, A.~Okounkov, and R.~Pandharipande.
\newblock Gromov-{W}itten theory and {D}onaldson-{T}homas theory. {I}.
\newblock {\em Compos. Math.}, 142(5):1263--1285, 2006.

\bibitem{mnop2}
D.~Maulik, N.~Nekrasov, A.~Okounkov, and R.~Pandharipande.
\newblock Gromov-{W}itten theory and {D}onaldson-{T}homas theory. {II}.
\newblock {\em Compos. Math.}, 142(5):1286--1304, 2006.

\bibitem{virasorotoricPT}
M.~{Moreira}, A.~{Oblomkov}, A.~{Okounkov}, and R.~{Pandharipande}.
\newblock {Virasoro constraints for stable pairs on toric 3-folds}.
\newblock {\em arXiv e-prints}, page arXiv:2008.12514, August 2020.

\bibitem{toricGWPTvertex}
A.~Oblomkov, A.~Okounkov, and R.~Pandharipande.
\newblock G{W}/{PT} descendent correspondence via vertex operators.
\newblock {\em Comm. Math. Phys.}, 374(3):1321--1359, 2020.

\bibitem{PTfoundations}
R.~Pandharipande and R.~P. Thomas.
\newblock Curve counting via stable pairs in the derived category.
\newblock {\em Invent. Math.}, 178(2):407--447, 2009.

\bibitem{stablepairsbps}
R.~Pandharipande and R.~P. Thomas.
\newblock Stable pairs and {BPS} invariants.
\newblock {\em J. Amer. Math. Soc.}, 23(1):267--297, 2010.

\bibitem{survey}
Rahul Pandharipande.
\newblock Descendents for stable pairs on 3-folds.
\newblock In {\em Modern geometry: a celebration of the work of {S}imon
  {D}onaldson}, Proc. Sympos. Pure Math.

\bibitem{toricGWPT}
Rahul Pandharipande and Aaron Pixton.
\newblock Gromov-{W}itten/pairs descendent correspondence for toric 3-folds.
\newblock {\em Geom. Topol.}, 18(5):2747--2821, 2014.

\bibitem{witten}
Edward Witten.
\newblock Two-dimensional gravity and intersection theory on moduli space.
\newblock In {\em Surveys in differential geometry ({C}ambridge, {MA}, 1990)},
  pages 243--310. Lehigh Univ., Bethlehem, PA, 1991.

\end{thebibliography}


\end{document}